\newfont{\msbm}{msbm10 scaled\magstephalf}
\def\aut{\rm Aut}
\def\rg{\mathop{\rm rg}}
\newtheorem{theorem}{Theorem}[subsection]
\newtheorem{corollary}[theorem]{Corollary}
\newtheorem{convention}[theorem]{Convention}
\newtheorem{lemma}[theorem]{Lemma}
\newtheorem{definition}[theorem]{Definition}
\newtheorem{claim}[theorem]{Claim}
\newtheorem{fact}[theorem]{Fact}
\newtheorem{remark}[theorem]{Remark}
\newtheorem{notation}[theorem]{Notation}
\newtheorem{example}[theorem]{Example}
\def\b1K{\mbox{\boldmath  K }_{-1}}
\def\bK{\mbox{\boldmath  K }}
\def\bL{\bf  L }
\def\Mscr{{\MM }}
\def\Rscr{{\cal R }}
\def\Ascr{{\mathcal A }}
\def\Lscr{{\cal L     }}
\def\Sscr{{\cal S   }}
\def\dom{\unrhd}
\def\Th{\mathop{\rm Th}}
\def\th{\mathop{\rm Th}}
\def\<{\langle}
\def\>{\rangle}
\def\id{\rm id}
\def\ker{\rm ker}
 \def\dom{\mathop{\rm dom}}
\def\isom{{\rm isom}}
\def\sl{\mathop{\rm SL}}
\def\psl{\mathop{\rm PSL}}
\def\gl{\mathop{\rm GL}}
\def\dcl{{\rm dcl}}
\def\acl{{\rm acl}}
\def\cl{{\rm cl}}
\def\pr{{\rm pr}}
\newbox\noforkbox \newdimen\forklinewidth
\noforkbox\hbox{\lower 2pt\box1\lower
2pt\box0\relax}
\def\unionstick{\mathop{\copy\noforkbox}\limits}
\def\nonfork_#1{\unionstick_{\textstyle #1}}
\newbox\doesforkbox
\doesforkbox\hbox{\lower 2pt\box1 \lower
2pt\box2\lower2pt\box0\relax}
\def\aut{\rm Aut}
\newcommand{\forkindep}[1][]{%
  \mathrel{
    \mathop{
      \vcenter{
        \hbox{\oalign{\noalign{\kern-.3ex}\hfil$\vert$\hfil\cr
              \noalign{\kern-.7ex}
              $\smile$\cr\noalign{\kern-.3ex}}}
      }
    }\displaylimits_{#1}
  }
}
\newcommand{\nonforkindep}[1][]{%
  \mathrel{
    \mathop{
      \vcenter{
        \hbox{\oalign{\noalign{\kern-.3ex}\hfil$\vert$\rlap{$'$}\hfil\cr
              \noalign{\kern-.7ex}
              $\smile$\cr\noalign{\kern-.3ex}}}
      }
    }\displaylimits_{#1}
  }
}
\newcommand{\GG}{\mbox{\msbm G}}
\newcommand{\HH}{\mbox{\msbm H}}
\newcommand{\RR}{\mbox{\msbm R}}
\newcommand{\TT}{\mbox{\msbm T}}
\newcommand{\UU}{\mbox{\msbm U}}
\newcommand{\ZZ}{\mbox{\msbm Z}}
\newcommand{\QQ}{\mbox{\msbm Q}}
\newcommand{\NN}{\mbox{\msbm N}}
\newcommand{\MM}{\mbox{\msbm M}}
\newcommand{\CC}{\mbox{\msbm C}}
\newcommand{\Fscr}{\mathcal {F}}
\newcommand{\Gscr}{\mathcal {G}}
\def\sub'm{\prec_{\bK'}}
\def\grpf #1 #2{{\rm grp}_{#2}(#1)}
\def\spanf #1 #2{{\rm span}_{#2}(#1)}
\def\fldf #1 #2{{\rm fld}_{#2}(#1)}
\def\dclf #1 #2{{\rm dcl}_{#2}(#1)}
\def\rclf #1 #2{{\rm rcl}_{#2}(#1)}
\def\aclf #1 #2{{\rm acl}_{#2}(#1)}
\def\acff #1 #2{{\rm acf}_{#2}(#1)}
\def\strf #1 #2{{\rm str}_{#2}(#1)}
\def\tclf #1 #2{{\rm acf}_{#2}(#1)}
\def\abar{{\bf a}}
\def\dbar{{\bf d}}
\def\gbar{{\bf g}}
\def\hbar{{\bf h}}
\def\pbar{{\bf p}}
\def\qbar{{\bf q}}
\def\vbar{{\bf v}}
\def\xbar{{\bf x}}
\def\ybar{{\bf y}}
\def\tp{{\rm tp }}
\date{\today}
\newcommand{\im}{\mathrm{im}}
\title{Zilber's notion of logically perfect structure: Universal Covers}
\author[1]{John T. Baldwin}
\author[2]{Andrés Villaveces}
\affil[1]{Department of Mathematics, Statistics and Computer Science,

The University of Illinois at Chicago}
\affil[2]{Departamento de Matemáticas, Universidad Nacional de Colombia, Bogotá}
\begin{document}
\date{\today}
\maketitle

\abstract{We sketch the mathematical back ground and  the main ideas in the
proofs of categoricity of theories of several examples of universal covers --
reducing an analytic to a model theoretic (discrete) description. We hope this
discussion will be useful to a wide spectrum of mathematicians ranging from
those working in geometry to those working in logic; specifically, model
theory. }

\tableofcontents

\section{Introduction}\label{intro}

%Contemporary interactions between model theory and algebraic and arithmetic geometry hark back at
%least to Hrushovski's use of stability theory in his proof of the Mordell-Lang geometry in the mid
%1990s (see~\cite{Bouscarenbook}). During that decade, a different line of interactions, promoted
%by Zilber, has been attempting to extract the ultimate structural consequences of a model-theoretic form of
%unidimensionality: strong minimality. Zilber conjectured in the early 1990s that every strongly
%minimal theory was of exactly one of three kinds, in terms of its combinatorial geometry:
%disintegrated, vector-space-like or field-like. Hrushovski's counterexample to this ``Trichotomy
%Conjecture'', combined with further analysis of structures having generalized versions of strong
%minimality led Zilber to a whole new methodological approach to the model theory of analytic
%structures and their connections with constructs stemming from mathematical physics.

%\sidebar{jb 1/12/23: I moved this in my view smoother version of the
%statement of purpose up to replace a duplication.  I have also revised the paragraph}

The goal of this paper is to sketch (hopefully for a wide spectrum of mathematicians ranging from
those working in geometry to those working in logic; specifically, model theory) some recent
interactions between model theory and a roughly
150-year old study of analytic functions involving complex analysis, algebraic topology, and number
theory that explore the canonicity of universal covers.
Towards this goal we  discuss and present several examples indicating the main ideas of the proofs
and the necessary changes in method for different situations.
%In this paper, we will summarize the method by presenting several examples. We will also attempt to describe some of the main ideas of their proofs. We aim our exposition to mathematicians trained in either model theory or in geometry.

Here is Zilber's description of his own project (from his 2000 Logic Colloquium talk in Paris~\cite{Zilberparis}):

\begin{quote}
\emph{
The initial hope of this author in  \cite{Zilbericm} that any
uncountably categorical structure comes from a classical context (the
trichotomy conjecture), was based on the belief that logically
perfect structures could not be overlooked in the natural progression
of mathematics. Allowing some philosophical license here, this was
also a belief in a strong logical predetermination of basic
mathematical structures.  As a matter of fact, it turned out to be
true in many cases.  \ldots
Another situation where this
principle works is the context of o-minimal structures \cite{PetStartri}. }

%\sidebar{jb 5/5/23 I cut reference to Zariski geometries for length.}
%
% Specifically for {\em Zariski geometries}, which
%are defined as the  structures with a good dimension theory and nice
%topological properties, similar to the Zariski topology on algebraic
%varieties (see  \cite{HruZil}).}

\end{quote}

% \sidebar{jb NOV 22:      We need to check how far along he was on complex exponentiation     when this was written (e.g. by looking at the Paris paper. But I don't have it at hand.}

A rather ambitious project aimed at finding \emph{categorical}
axiomatizations (De\-fi\-ni\-tion~\ref{catdef}) of various kinds of
\emph{universal covers} has been unfolding in the 21st century. The simplest
example of such universal covers is given by the short exact sequence:

\begin{eqnarray}0 \to \ker(\exp) \to (\CC,+,0) \ {\stackrel{exp}{\to}}
 (\CC,+,\cdot, 0,1)\to 1.
\end{eqnarray}\label{eqcover1}

Zilber's original project really aimed to understand the sequence
\begin{eqnarray}0 \to \ker(\exp) \rightarrowtail (\CC,+,\cdot,\exp) \         {\stackrel{exp}{ \twoheadrightarrow}}
(\CC,+,\cdot,\exp)\to 1.
\end{eqnarray}
% describing the complex exponential field.
The first diagram describes a two-sorted \emph{cover} of the multiplicative
group by the additive group. The {\em full} field structure is studied on the
range space although the kernel is of the homomorphism from $(\CC,+,0)$ to
$(\CC,\cdot,1)$.

% is with respect to the sequence displayed.
%Model
%theoretically, the first equation is two sorted; $+$ and $\cdot$ are not
%mixed.
The second~\cite{Zilberpseudoexp} corresponds to the \emph{theory} of the
complex exponential field. The  domain and range of the map are the same
exponential field but the kernel is again computed with respect to the
homomorphism $\exp$ from $(\CC,+)\rightarrow (\CC^*,\times)$.

%\sidebar{The andrews work in rmt is about the Hrushovski construction
%(including recursive model theory).}
In both cases, first order axioms are supplemented by an
$L_{\omega_1,\omega}$-sentence asserting the kernel is isomorphic to $\ZZ$,
i.e., is standard.
%\sidebar{jb 11/26/22 Prevous paragraph maybe duplicated in section 3}
Here, we focus on three main \emph{families} of generalizations (described in
the chart below) of the first diagram.  As  this question was extended to
more general algebraic contexts, the fundamental cover diagram from
equation~(\ref{eqcover1})  changed to this more general situation:

%\sidebar{jb NOV 22: I think the notation for these sequences is crucial and I am not happy   with my current version. Thoughts.  Changed
%the $\AAA$ to $\VV$: 11/26/22}

\begin{eqnarray}
 C\ {\stackrel{p}{\twoheadrightarrow}}
S(\CC).
\end{eqnarray}

Notice two things:
\begin{itemize}
    \item The map  $p$  remains a projection, but it will
	significantly change as the family of examples unfolds. Also,
    \item there is no longer a kernel when $S(\CC)$ is not a group.
\end{itemize}

Therefore, in a rather Protean way, the infinitary description that in the particular case described
a `standard kernel' assumes various guises for different examples. Usually, the
descriptions are of `standard fibres' rather than  having a `standard kernel'. %We will explain these meanings in the examples.

Crucially, in all cases except part of \S ~\ref{section:notop} the target
will be some kind of definable set in an algebraically closed field.  The
necessary vocabulary for the domain will vary among the situations
considered. Shimura varieties require a more general domain:

\begin{notation}\label{gs} (The general situation)
\begin{eqnarray}
 X^+\ {\stackrel{p}{\to}}
S(\CC) \to 1.
\end{eqnarray}
\end{notation}

Here, $S(\CC)$ is a variety  arising as  the quotient of the action of a discrete
group on $\HH$ (hyperbolic space)  or more generally (Shimura varieties) on  a  hermitian symmetric
domain $X^+$. The target is described by a first order  theory $T:= \th(S(\CC))$ in a large enough
 (field) countable vocabulary with
quantifier elimination (possible, as $S$ is definable in $(\CC,+,\times)$).
%We  will instantiate the schema in
Notation~\ref{gs} thus instantiates the general schema, with
appropriate notations for specific cases to be given as we discuss them. Zilber describes the value of his
project in terms  of   `a complete formal invariant' (Remark~\ref{cfi}).

%\sidebar{jtb nov 22,22: I have used this quote several times and would be please if we      +++could find a better one. Perhaps we could contrast it with this one from the 2019 paper.}
%\sidebar{jb Nov 26, 22. We may have to drop this quote.  It is taken from the Aug 5/6  version of \cite[2]{DawZil0}.  But the $\UU$ is defined in the previous paragraph and my square bracket insert may be incomprehensible.
%
%I replaced the offending  phrase (commented out below) with ellipses.
%}
\begin{quote}
\emph{
The geometric value of the project is perhaps in the fact that the
 formulation of the categorical theory of the universal cover of a variety  $X$ \ldots
 % (essentially  the description of [a locally modular structure $\UU$  that is fused with an algebraically closed field])
  is essentially a formulation of a complete formal
invariant of  $X$. }

\cite[1]{DawZil0}
\end{quote}

The following chart organizes the papers which are the major source for this study. It also provides
a keyword describing the main method or context used, and the section of this paper where issues
around the specific variant are explained.
%\subsection{The scope of this paper}

\begin{center}

{\footnotesize
%\begin{flushleft}
\begin{tabular}{|c| c   |c   |l |c|}
\hline
 & topic & paper & method/context & section \\
\hline
    1 & Complex exponentiation & \cite{Zilbercatex} & quasiminimality & §\ref{intro}\\
\hline \hline
2 & cov mult group & \cite{Zilbercovers} & quasiminimality & §\ref{intro}\\
3 & & \cite{BaysZil} & quasiminimality & \\

\hline
4 & $j$-function &\cite{Harristhe} & background & §\ref{aroundjfunction} \\
\hline
5 & Modular/Shimura Curves & \cite{DawHarris} & quasiminimality & §\ref{modshicurves} \\
6 &  Modular/Shimura Curves &\cite{DawZil0} &quasiminimality &\\
 \hline \hline
7 & finite Morley rank groups & \cite{BGH} & fmr \& notop  & §\ref{2sort}\\
8 & Abelian Varieties & \cite{BHP} & fmr \& notop /quasiminimality & §\ref{abelianvarieties}\\
\hline
9 &   Shimura {\em varieties} & \cite{Etrev} & notop & §\ref{Shivar}\\
\hline\hline
10 & Smooth varieties & \cite{Zilbomin} & o-quasiminimality & §\ref{smoothvar}\\
% \hline
% transcendence theory & various & $DCF_0$\\
\hline\hline
\end{tabular}
%\end{flushleft}
}

\end{center}

% \begin{tabular}{|c   |c   |c |}
% \hline
% topic & paper & method \\
% \hline
% Complex exponentiation & \cite{Zilbercatex} & quasiminimality \\
% \hline
% cov mult group & \cite{Zilbercovers} & quasiminimality \\
% & \cite{BaysZil} & quasiminimality \\

% \hline
% $j$-function &\cite{Harristhe} & quasiminimality \\
% \hline
% Modular/Shimura Curves & \cite{DawHarris} & quasiminimality \\
%  &\cite{DawZil0} & ?\\
%  \hline \hline
% Abelian Varieties & \cite{BGH} & Kummer \\
% & \cite{BGH} & Kummer notop \\
% \hline
% Shimura {\em varieties} & \cite{Etrev} & notop\\
% \hline\hline
% & Zilber & o-quasiminimality \\
% \hline
% transcendence theory & various & $DCF_0$\\
% \hline\hline
% \end{tabular}

% \sidebar{jb 12/2/22: Probably the first line should just be removed or put in a footnote. Is there a principle reason we aren't considering it. Is it in fact a covering in the topological sense?  Are all the others? }

%\sidebar{jb Jan 13, 2023: I have rewritten the beginning of this to correct a mistake about the role of the `uncountably many quantifier'. I am not sure who made it.
%Should be checked.
%}

\medskip
In this chart, the first line \cite{Zilberpseudoexp} (an axiomatization of the exponential  map from
the complex field to itself) differs from the others
%(at least the quasiminimal)
in the role of the quantifier `there exists uncountably many'. In that case
it is essential to directly control the \textit{cardinality of the algebraic
closure of a countable set}. Moreover, in line 1 the domain has a field
structure that disappears in the two-sorted approach of the rest. In the
other lines of the chart, the infinitary logic $L_{\omega_1,\omega}$ is used
to control the size of fibers of the cover or when the structure is a group
the size of the kernel. This requirement suffices to also control the
cardinality of the algebraic closure. Lines 2-6   deal with \textit{curves}
($1$-dimensional objects) where categoricity is obtained by quasiminimality.
The third big horizontal block deals with higher dimensional varieties. Lines
7 and 9 stray from formal categoricity towards more traditional descriptions
of models;  quasiminimality is replaced by a different version of excellence
arising in Shelah's study of notop theories (an important notion in
Classification Theory). Both quasiminimality and `notop' apply to line 8.
% the third block collects constructions where categoricity is not the main theme, but rather weaker forms of regularity, where the role of the stability theoretical property called \textit{otop} is central.
The last line considers  families of covers of arbitrary smooth algebraic
varieties with an infinitary logic construction defined over o-minimal
expansions of the reals. There, the focus is on categoricity in $\aleph_1$.

%\sidebar{What are now lines 7 and 8 were garbled. I numbered to make matters
%clearer but the discussion of 7-9 could likely be improved.}

It is worth noting that we could have organized our chart under a totally
different scheme.  The Abelian varieties and $(\CC,+)$ are specific
varieties. The $j$-function and the Shimura varieties may be regarded as
moduli spaces for (generalized) families of varieties\footnote{Various types
of Shimura varieties include Siegel, PEL-type, and Hodge-type; only some
parameterize algebraic avarieties.}. After preliminary discussions on the
model theoretic framework, in Section~\ref{modshicurves} we sketch in some
detail categoricity of universal covers of modular curves. In the later
sections we describe the modifications to this program necessary for higher
dimensions.
%
% we summarise work
%on categoricity of universal covers, trying to illuminate the model theoretic
%ideas that appear in the various cases.

%Section~\ref{fmr} discusses the
%role of Kummer theory and the generalzt

\subsection{Mathematical Encounters}

\subsubsection{Some ancient history: In and out of the Zilber world}

%\sidebar{Very much a first draft}

The first author turns to the first person singular for some memories:

Zilber and I both received our Ph.D.'s in the early 1970's. An important result
appeared in both theses: the solution to Morley's conjecture that an $\aleph_1$-categorical theory
has
finite Morley rank. Such an overlap
was not an issue during the Cold War. (On the other hand, Baldwin's advisor,
Lachlan, had to write an entirely new thesis when the result of the proposed one appeared in the west as he was about
to submit.)

I first (given my zero knowledge of Russian) learned in any detail of Zilber's work during the
1980-81 model theory year in Jerusalem. Greg Cherlin had no such deficiency and gave with Harrington
and Lachlan an alternate proof of Zilber's theorem that there were no finitely axiomatizable totally
categorical theories. They relied on  the classification of finite simple groups. A few
years later Boris completed his  model theoretic proof of the key combinatorial lemma avoiding that reliance.

I first knew Boris in any depth during the model theory semester in Chicago 91-92.  Unfortunately, I
had partially financed a semester by agreeing to be acting head the Fall semester, thereby
restricting my mathematical activity. In that busy fall, Boris and Angus Macintyre lectured on Tuesday's on
Zariski  geometries
and o-minimality, respectively.
% This rather frustrated my mathematical activity.
The lively group include Macintyre, Zilber, Laskowski,  Marker, Otero, D'Aquino and myself, with Pillay
driving in weekly from Notre Dame.
Lunch was at a deli that Boris insisted on because of the soup followed by
coffee at Jamoch's, the first modern coffee house in the UIC area.

About that time, I began work on the Hrushovski construction, but in a quite
different direction from Boris: predimension with irrational $\alpha$. This
led to my work with Shelah giving the first full proof of the $0-1$ law with
edge probability $n^{-\alpha}$ and that
%proof with Shelah that
the theory of the Shelah-Spencer graph was
stable, building on the 1992 Ph.D. thesis of my student Shi. And
this
led to work
 with Kitty Holland on
fusions, giving the first construction of a rank $2$ field with a definable
infinite predicate. And then back to Boris and his work on complex
exponentiation.  Understanding his notion of quasiminimal excellence inspired
the desire to understand Shelah's more general notion of excellence. Thence
came my monograph on abstract elementary classes and subsequent work on
infinitary logic.
 In any case, visits several times a decade to Oxford always were exciting sources of ideas and
pleasant times.

\subsubsection{An unlikely encounter of two areas: MAMLS at Rutgers, 2001}

The second author of this paper witnessed and participated in one of those
momentous encounters of two areas that only seldom happen: during the MAMLS
Meeting at Rutgers in February 2001, a group of people working in Abstract
Elementary Classes (including Rami Grossberg, Monica VanDieren, Olivier
Lessmann and the second author of this paper) was very busy discussing
Shelah's notion of excellence, originally linked to his work in the model
theory of $L_{\omega_1,\omega}$. The \emph{$n$-amalgamation diagram} was very
much part of that discussion. There was a lecture by Boris Zilber at the end
of the day, and we all attended, not expecting to understand much, but eager
to see him speak. To our great surprise, at the end of Zilber's lecture
(dealing with exponential covers, mentioning many analytic number theoretic
methods that were arcane to us, and mixing in areas such as ``Nevanlinna
Theory'', etc.), he asked a final question and drew a picture underscoring
his question. Boris's picture was \emph{exactly} the $n$-amalgamation diagram
we had been discussing thoroughly with the AEC people those very same days;
Boris's question was exactly about the behaviour of types in the amalgam and
how it could be controlled by small pieces in the components. We jumped to
talk to him at the end of his lecture, with the excitement of seeing a
potential connection. Boris said he didn't know the model theory of
$L_{\omega_1\omega}$ but he would look into excellence\ldots

The rest is history: after a few weeks, a first draft of a proof of properties of
pseudoexponentiation drawing on a version of excellence and quasiminimality in $L_{\omega_1\omega}$
was circulated, and Zilber started using many methods from excellent classes
and infinitary logic. The richness of this approach has provided many interesting connections; we
explore some of them in our paper.

\subsection{A word of thanks from the second author}

Here, the second author turns to the first person singular, for this excerpt:
\begin{quotation}
    \emph{I would like to thank Boris Zilber, at a very personal level, for a
    life-changing conversation we had in 2007 in Utrecht, during a meeting organized by Juliette
    Kennedy, on connections between Mathematics, Philosophy and Art.
%    We gave lectures for the
%meeting and saw many interesting connections. But one
One evening, after dinner, Boris said ``let's go
for a walk and speak a bit about mathematics.'' In the cold night along the canals, he described,
for about an hour, some of what he had
been doing---I kept asking and asking questions. At some point, on a bridge, he turned to me and
said: ``But you, in what have you been working?'' I tried to gather my thoughts on the spot while
walking, and started describing a project we had back then, with Berenstein and
Hyttinen~\cite{BerensteinHyttinenVillaveces}, of
understanding independence notions in continuous logic, trying to extend the work of Chatzidakis and
Hrushovski to the continuous case, and encountering difficulties. Boris asked me to describe briefly
continuous model theory and continuous abstract elementary classes.
At some point, he said I obviously had tools for dealing with model
theoretical approaches to quantum mechanics. I asked how so. He said ``look at Gelfand triples,
\ldots''. I returned to Helsinki where I was spending a sabbatical, and Boris's remarks made a deep
change in my own approach to model theory, in the possibilities I started slowly unfolding. I am
deeply grateful for that momentous conversation, and for all the lines of work that have derived
from that evening!}

\hfill Andrés Villaveces
\end{quotation}

The authors want to thank many people who helped this project go through.
Among them, hoping not to forget important people, are, most notably
Sebastián Eterovi\'{c}, Jim Freitag, Jonathan Kirby, Anatoli Libgober, Ronnie
Nagloo, and Boris Zilber. Without their attention to our discussions, online,
at conferences, and on campus, this project would have been much harder to
complete.  The first author especially wants to thank Ronnie and Sebastián
for hours of conversation. The second author especially thanks Alex Cruz and
Leonardo Cano for many helpful discussions related to these subjects in the
Bogotá seminar before this project started. Finally, discussions with Thomas
Kucera and Martin Bays were very important at earlier stages of the
construction of this paper. Finally, the referee reports were invaluable.

\section{Model theory in Mathematics }\label{mtbg}

We first deal with some variations in  model theoretic and geometric terminology.

\subsection{Model theoretic background}
Mathematical logic makes a central distinction between a vocabulary and a
collection of sentences in a logic. For this reason, we use `language' only
for the second and reserve `vocabulary' for what is sometimes called
similarity type.

\begin{definition}[Vocabulary and Structure]\label{vocstr}
\begin{enumerate}
\item A vocabulary  $\tau$  is a collection of constant, relation, and function
symbols (with finitely many arguments).
\item A  $\tau$ -structure is a set in which each  $\tau$-symbol is
    interpreted, e.g., an $n$-ary relation symbol as an $n$-ary relation.
\end{enumerate}
\end{definition}

%\sidebar{jb: 1/12/23 I think Specify rather than `we spedify is better here; you changed from `model theorists' to `model theory' but then put the individual back in.}

\begin{definition}{\em Full formalization} involves the following
components.

\begin{enumerate}
    \item A \textbf{vocabulary}  with associated notion of structure as in Definition~\ref{vocstr}.
    \item A \textbf{logic} $\Lscr$ has:
\begin{description}
\item[a] A class  $\Lscr(\tau)$
%\footnote{For most logics there are only
%    a set of formulas, but some infinitary languages have a
%    proper class of formulas.}
    of `well formed' \textbf{formulas}.
\item[b] A notion of `\textbf{truth} of a formula' from the class $\mathcal{L}\left( \tau \right) $  in a
    $\tau$-structure, usually denoted $\mathfrak{A}\models \varphi$.
\item[c] A notion of a ``formal \textbf{deduction}'' for this logic.
     \end{description}
\item \textbf{Axioms}: Specific sentences of the logic that specify the basic properties of the situation in question.
\end{enumerate}
\end{definition}
% \sidebar{\color{red} jb 11/26/22: overleaf does not put first item on a new line. the extraneous small skip is a failed hack. {\bf Fixed:} overleaf does not believe in spacing afer a square bracketed title. maybe this is standard latex.}
\begin{example} (Three important logics.)
\begin{enumerate}
\item The \textbf{first order language} $\Lscr_{\omega,\omega}(\tau)$ associated with  $\tau$ is the least set of formulas
containing the atomic  $\tau$ -formulas  and closed under \emph{finite}
Boolean operations and quantification over finitely many
individuals.

\item The  \textbf{$\Lscr_{\omega_1,\omega}(\tau)$  language} associated with  $\tau$  is the least set of                    formulas
containing the atomic  $\tau$ -formulas  and closed under \emph{countable} Boolean operations and quantification over finitely
many individuals.

\item The \textbf{second order    language} associated with  $\tau$, denoted $\Lscr^2(\tau)$,  is the least set of formulas extending   $\Lscr_{\omega,\omega}(\tau)$  by allowing quantification over sets and relations. $\Lscr^2(\{=\})$ is symbiotic (`morally equivalent', roughly speaking) with set theory.
    \end{enumerate}

\end{example}

Morley rank (corresponding to the Krull/Weil dimension in the particular case of fields) was introduced in
\cite{Morley65} to study theories categorical
in uncountable power.
%Considerable effort (see~\cite{BorovikNesinbook} for a detailed account)
%has been spent on the Cherlin-Zilber
%conjecture that a  simple finite Morley rank (fmr) group is a connected algebraic over an
%algebraically closed field.
Section~\ref{section:notop}   explores  the role of  finite Morley rank
groups in studying covers. Three good sources for the more advanced model
theory used here are \cite{Markerbook,TentZiegler,Poizatbook}.

\subsection{Various Viewpoints}

We now discuss two quite different uses of the three words \emph{automorphism}, \emph{model} and
\textit{definable},
coming from areas of mathematics relevant to this paper. (The difference in use
depending on the area of mathematics has been at times a source of confusion.)

%that confused us in
%preparing this paper.

\begin{remark}{\rm (Automorphism: two notions)}\label{twonot}

\begin{description}
\item [In Model Theory:] An \textbf{automorphism} of a  $\tau$-structure  $\Ascr$  is
a permutation of its universe $A$ that preserves (in both directions) each relation or function
symbol for  $\tau$. For instance, the automorphisms of a geometry (when given in terms of lines and
points together with an incidence relation) are the \emph{collineations}.
\item [In Algebraic Geometry:] An \textbf{automorphism} of  a variety is an
    invertible morphism\footnote{This begs the question of defining
    morphism. A good approximation is `definable map'. In algebraic
    geometry a morphism is (cf \cite[p 79: section 4.4]{Poizatbook2}) a
    constructible (generically quasi-rational) bijection. Biregular and
    birational are more specific syntactic restrictions on an
    isomorphism.}.
\end{description}

\end{remark}

%\sidebar{jb 1/12/23: Note change in next para.}

%In model theory, we add the adjective `definable' when there is a formula of the language that
%captures the notion. Thus, the algebraic geometric `automorphism' becomes `definable bijection'.  It
%is worth noting that many
%important automorphisms in algebraic geometry do not necessarily preserve structure.

%\sidebar{jb Nov 22: Is that last sentence right? It may be that the structure
%preserved is not the one that is explicit.  That is an isomorphism of affine varieties doesn't preserve things in the geometry but it is an automorphism in our sense of the associated coordinate rings ?????}

%\sidebar{jb 11/28/22: I was worried today by a lecture on moduli spaces. the speaker said that birational equivalence is isomorphism of curves, but not for higher dimensional varieties -- only dense subsets are isomorphic.
%}

\begin{remark}\label{canmod}{\rm (Model: two notions)}{\rm
\begin{description}
\item [In Model Theory:]

The word \textit{model} also sees different uses depending on the area. In
logic, a model is sometimes just a $\tau$-structure  but often signifies
that the structure satisfies a theory (as in `$\left(
\mathbb{C},+,\cdot,0,1 \right) $ is a model of the theory $ACF_0$'). {\em
Minimal model} might mean `no proper elementary submodel' or, very
differently, `every definable subset is finite or cofinite'.
% The `model'  is used quite differently in algebraic geometry.

\item [In Algebraic Geometry:] A \textbf{model} is a specific biregularity
    class within a birational equivalence class. In  Weil/Zariski style,  a
    variety is determined by a coordinate ring, but only up to isomorphism
    of this coordinate ring.  A `model' of the variety might be a specific
 affine variety with that coordinate ring, but any biregularly
isomorphic variety would also be a model.

%\sidebar{jb 11/23 10:46: previous paragraph and minimal model definition from Marker; the attempt to interpret is mine.  See Section 1.1 of \cite{DawZil1} modular curves and their pseudo-analytic cover. It is pretty clear I don't have the idea yet.  I have been struggling with the notion  for 40 years.}

Thus, unlike
%what we do in
model theory, algebraic geometry does not identify `models' up to
isomorphism. Rather, it looks  for   a specific `canonical representation'
among `isomorphic solution sets'. A {\em minimal model} is a smooth variety
$X$ with function field $K$ such that if $Y$ is another smooth variety with
function field $K$ and $f\colon X\mapsto Y$ is birational, then $f$ is an
isomorphism.
\end{description}}
\end{remark}

\begin{remark}{\rm (Definable/defined: two notions)}{\rm
	\begin{description}
\item [In Model Theory:] A subset $X$ of a model $M^n$ is {\em defined} over a set $A$ if there is a formula
$\phi(\xbar,\abar)$ with solution set  $X$.
\item [In usual mathematics] the word `defined' is often short for `well-defined' saying that
    the value of a function defined on a
quotient space does not depend on the choice of a representative.

%\sidebar{Should there be a reference to the difficulty in understanding `defined over
%$ES^{ab}(\Sigma)$' \cite[p 17 2nd para]{Etrev}?}

\end{description}}
\end{remark}

In model theory, we add the adjective `definable' when there is a formula of the language that
captures the notion. Thus, the algebraic geometric `automorphism' becomes `definable bijection'.  It
is worth noting that many
important automorphisms in algebraic geometry do not necessarily preserve structure.

\begin{remark}[Why infinitary logic?]{\rm
A natural question at this point is: Why is axiomatizability in
$L_{\omega_1,\omega}$ relevant to geometric questions?
The answer to this question is not univocal, and strongly reflects different
historical issues arising in different areas of mathematics. We discuss four responses, two from
ordinary mathematics, two from logic.

\begin{enumerate}
\item In ordinary mathematics:
\begin{enumerate}
\item The constraints of expressibility offered by a particular logic force a detailed analysis of
    the hypotheses of a result.  This analysis in similar earlier cases has led to, for example, the
    Zilber-Pink conjecture and the Conjecture on the Intersection of Tori (see
    e.g.~\cite{BreuillardPizarroTentWagner}).
\item Of course, each of the `canonical structures' is explicitly definable in set theory. But this
    definition in most cases
    is useless for studying the object. Useful succinct second order axioms are available
    for the real and complex numbers  but are only partially known for universal covers. First order
    logic is stymied {\em a priori} by the intractability of arithmetic.
   Thus, categoricity in infinitary logic is essential for \textit{giving an
    `algebraic' account of an `analytic object'}. This use of model theory can be seen as
    part of the larger scale \emph{GAGA} mathematical program of bridging analytical
    concepts and algebraic ones.

   % Sections~\ref{cqe} to~\ref{section:notop} provide a partial solution.  Remarks
%    in~\ref{section:fic} address the limitations of that approach.

%In particular, it may be that providing an axiomatization by a categorical sentence here is a way     of making precise the sense of the word `canonical'. For example, in~[2.2][DawZil0], the authors say that \textit{given a compact open subgroup  $K$  of  $G(A_f)$  one defines the      canonical
%model for respective Shimura datum to be the $C$-schemification of the structure.} The phrase `Canonical  models of Shimura varieties' existed long before this paper.
%%We should   explain what the number theorists mean when they said this phrase.
%\sidebar{jb 1/12/23: Previous definitely needs thought. }

\end{enumerate}
    \item In logic (in particular, in model theory):
  \begin{enumerate}
\item A natural question is: are there important mathematical notions expressible in infinitary logic which are not
expressible in first order? The study of complex exponentiation yielded a superb initial example: the
categoricity of the covering map of  $\CC^*$  in  [BaysZil].
 %    and forced   mathematical advances to show  in the context of complex exponentiation   +++that   `realizing a generic type' is, rather surprisingly, first order expressible.

 \item  This raises the question of what are the {\em new} axioms in this paper that require an
     infinitary description.  The infinite dimension axioms are well known and the switch from
     `standard kernel' to `standard fiber over $z$' (i.e. $q^{-1}(z)$) is unremarkable. It seems the finite index conditions
     (Section~\ref{section:fic}) are not   first order expressible.

%      \sidebar{Are the finite index axioms first order?  Jim agreed with me that they        probably are not.}
\end{enumerate}
\end{enumerate}}
\end{remark}
%\sidebar{jb Nov 25 10:35 am:  Here a couple of  paragraphs intended as a very preliminary answer  to the previous question 1a about justifying  the role of axiomatization.  MOVED to \ref{abvar} }

%

% (theorem 4.6 seemed to technical:)A model $\tilde M$ of  $\hat T$ is determined up to isomorphism over $\ker (\tilde M)$$
%by
%%%%%\begin{enumerate}
%%%%\item the isomorphism type of ker(\tilde M), equipped with all structure induced from
%%%$\hat T$;
%%\item the transcendence degree of KM, where M ∼=
%G(KM).

\section{Categoricity, quasiminimality and excellence}\label{cqe}

We give a quick sketch of notions around categoricity\footnote{More specifically, when in model
theory we use the word \emph{categoricity,} we mean categoricity in a specific cardinality or `in
power'. See a thorough discussion of categoricity in various logics  in~\cite[\S
3.1]{Baldwinphilbook} and an exposition of the philosophical import of the notion in~\cite{CrViZi}.}
and the history
of their logical development.

% \sidebar{\color{red} 11.26/22
% By asking to enumerate in capital aravi, I got arabic and but no space before first

% Putting in XXX started a new line. just open close curlies doesn't

% }

\begin{definition} [Categoricity] \label{catdef}
\begin{enumerate}[1]
    \item   A \emph{theory} $T$ in a logic $\Lscr$ is a collection of $\Lscr$-sentences in a vocabulary $\tau$.
    \item $T$ is \emph{categorical in cardinality $\kappa$} ($\kappa$-categorical) if all models $M$ of $T$
    with $|M|=\kappa$ are isomorphic.
\end{enumerate}
\end{definition}

%We do not consider the notion of `categorical' in the sense
%often considered in philosophy (namely, that there is only one model).
%The vacuity of that notion (in mathematical logic) is argued in
%\cite[\S 3.1]{Baldwinphilbook}.
Although  certain canonical mathematical structures
%(e.g. the real
%and complex numbers and the natural numbers)
 are
fruitfully axiomatized in second order logic,  rather than
second order categoricity, we usually consider
these characterizations as defining these structures \emph{in set theory.}  Such
definitions are exactly what it means to be a structure.
Second order categoricity {\em per se} gives no useful mathematical information.
In contrast, $\kappa$-categoricity in first order logic or in
$L_{\omega_1,\omega}$ provides very significant
(combinatorial geometric) information; %in many important situations,
it assigns a dimension to each model.

%\sidebar{jb 1/12/23: took out unidimensionality}
%
%\sidebar{jb 2:30 pm Nov 23: Expository issue. I avoided the
%diagram issue last week (completing cubes etc)  but I think it is really a key idea for the explaining why excellence implies arbitrarily large models.  On the other side, how central is that point for this article.  We might want to argue that one
%of the virtues of the canonical objects is that they are `duplicated' in arbitrary cardinalities.
%
%{\color{red} overleaf issue jb nov 25: Added the diagram but can't make it compile.}
%
%
%}
%
%\sidebar{\color{red} overleaf issue. Erratic handling of cites in a list; overprinting not handling two cites jb 11:30 Nov 25}

\subsection{The Classical Categoricity Theorems}

The following results survey the spectrum of cardinals in which certain types
of theory can be categorical. These theorems are of the form \emph{if a
theory (or a sentence) is categorical in \underline{some high enough}
cardinal(s), then it must be categorical on \underline{a tail of} cardinals.}

\begin{theorem}[Morley's Categoricity Theorem]
   A countable first order theory
   is categorical in one
    uncountable cardinal if and only if it is categorical in all uncountable cardinals.~\cite{Morley65}.
\end{theorem}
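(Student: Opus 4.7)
The plan is to prove the two implications separately. The easy direction, that categoricity in all uncountable cardinals implies categoricity in one, is immediate. So I would concentrate on the substantive direction: assume $T$ is a countable complete theory categorical in some uncountable cardinal $\kappa$, and show $T$ is categorical in every uncountable cardinal $\lambda$.

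The plan has three stages, following Morley's strategy (with the Baldwin--Lachlan streamlining for the final stage). First, I would show that $T$ is $\omega$-stable (what was once called totally transcendental), i.e., for every countable $A \subseteq M \models T$ the set $S(A)$ of complete types over $A$ is countable. The argument is by contradiction: if $T$ is not $\omega$-stable, one can build, using Ehrenfeucht--Mostowski models over a well-ordered set of indiscernibles, a model of cardinality $\kappa$ that omits certain types and hence is not $\aleph_1$-saturated; simultaneously, $\omega$-stability failing is exactly what allows the construction of a saturated model in $\kappa$. Since saturated models of the same cardinality are isomorphic, the two constructions contradict categoricity in $\kappa$. This step is the main obstacle — it is where Morley introduced the rank that bears his name, and it requires delicate counting arguments together with the two-cardinal machinery.

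Second, having secured $\omega$-stability, I would develop Morley rank and degree for formulas and types, and use it to locate a \emph{strongly minimal} formula $\varphi(x,\bar a)$ (a formula whose solution set is infinite and whose definable subsets are all finite or cofinite, uniformly). In an uncountably categorical theory there is such a $\varphi$ over a finite parameter set $\bar a$ lying in some countable model; this is obtained by taking a formula of minimal Morley rank and degree $1$ in a suitably saturated extension, and then showing it can be realized over a finite tuple using $\omega$-stability.

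Third, I would exploit the pregeometry that algebraic closure induces on $\varphi(M,\bar a)$ for any model $M$: it gives every model a well-defined dimension. The structural lemma is that every model $M \models T$ is prime and minimal over $\bar a \cup \varphi(M,\bar a)$, so $M$ is determined up to isomorphism over $\bar a$ by the dimension of its strongly minimal set. A Löwenheim--Skolem and cardinality count shows that for uncountable $M$ this dimension equals $|M|$. Hence any two models of the same uncountable cardinality $\lambda$ have the same dimension, and are therefore isomorphic, giving $\lambda$-categoricity. The hardest conceptual step remains the first: the passage from categoricity in one uncountable cardinal to $\omega$-stability, which is where the delicate combinatorics of types and the geometry of Morley rank must be set up from scratch.
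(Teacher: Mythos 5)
The paper does not prove this theorem at all: it is stated as classical background and simply cited to Morley's 1965 paper, so there is nothing internal to compare your argument against. Your outline is the standard route — Morley's reduction to $\omega$-stability followed by the Baldwin--Lachlan analysis via a strongly minimal formula and its pregeometry — and at the level of a sketch its architecture is the accepted one.

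Two points in your sketch are off or underdeveloped, and both matter if this were to be expanded into a proof. First, the sentence ``$\omega$-stability failing is exactly what allows the construction of a saturated model in $\kappa$'' is backwards: it is $\omega$-stability that lets one build saturated (or at least highly saturated) models in every uncountable cardinality, while the \emph{failure} of $\omega$-stability yields a countable set $A$ with uncountably many types over it, hence a model of size $\kappa$ realizing uncountably many types over a countable subset. The contradiction with categoricity comes from comparing that model with the Ehrenfeucht--Mostowski model of size $\kappa$, which realizes only countably many types over each countable subset; no saturated model is needed in this step. Second, in the final stage the claim that ``a L\"owenheim--Skolem and cardinality count shows that for uncountable $M$ this dimension equals $|M|$'' hides the real content: a priori the strongly minimal set $\varphi(M,\bar a)$ could be small while $M$ is large, and ruling this out is exactly the statement that $T$ has no Vaughtian pairs (equivalently, no two-cardinal models for $\varphi$), which is deduced from categoricity via the two-cardinal theorem and $\omega$-stability. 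The same ingredient is what guarantees that every model is prime (you do not need minimal) over $\bar a \cup \varphi(M,\bar a)$, and uniqueness of prime models over sets in $\omega$-stable theories then converts equality of dimensions into isomorphism. With those two repairs your outline matches the canonical proof.
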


\begin{theorem}[Shelah's Categoricity under the weak continuum hypothesis below $\aleph_\omega$]
  Assuming $2^{\aleph_n}< 2^{\aleph_{n+1}}$ a {\em sentence} in
  $L_{\omega_1,\omega}$  that is categorical in  $\aleph_n$ (for every $n<\omega$)
  is categorical in all uncountable cardinals~\cite{Sh87a}, \cite{Sh87b}.
\end{theorem}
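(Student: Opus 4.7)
The plan is to follow Shelah's program of reducing an $L_{\omega_1,\omega}$-sentence to the atomic models of a complete countable first-order theory, and then using the categoricity assumption in every $\aleph_n$ together with the weak continuum hypothesis to establish \emph{excellence}, from which categoricity in all uncountable cardinals will follow. First I would replace the given sentence $\varphi$ by passing to a countable fragment $\F \subseteq L_{\omega_1,\omega}$ containing $\varphi$ and all subformulas, expanding the vocabulary by a new predicate $R_\psi(\bar x)$ for each $\psi(\bar x) \in \F$, and adjoining axioms enforcing the intended interpretation. A standard argument (essentially the Chang--Scott construction) then produces a complete first-order theory $T^*$ in a countable vocabulary whose class $K$ of atomic models (in the extended vocabulary) corresponds bijectively, under reduct, to the models of $\varphi$. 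Thus categoricity of $\varphi$ in $\kappa$ is equivalent to $|K_\kappa| = 1$, and we work henceforth with the atomic class.

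Next I would use the weak CH hypothesis $2^{\aleph_n} < 2^{\aleph_{n+1}}$. Categoricity in $\aleph_{n+1}$ combined with this bound rules out having $2^{\aleph_{n+1}}$ pairwise non-isomorphic atomic models of size $\aleph_{n+1}$; via Shelah's weak diamond machinery, the failure of amalgamation in $\aleph_n$ for atomic models would produce exactly that many models in $\aleph_{n+1}$. Iterating this for every $n$, one obtains the amalgamation property, joint embedding, and no maximal models in $K_{\aleph_n}$ for every $n < \omega$. Parallel arguments (together with categoricity in $\aleph_0$, which follows from the categoricity in any $\aleph_n$ in the atomic setting) give $\aleph_0$-stability of $K$ in an appropriate Galois-type sense and a usable notion of type over countable atomic models, with a corresponding splitting-based independence relation.

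The heart of the proof, and what I expect to be the main obstacle, is to derive \emph{$n$-dimensional excellence}: every independent system $\{M_s : s \subsetneq n\}$ of countable atomic models indexed by the proper subsets of $n$ has a primary (atomic, prime) amalgam $M_n$ in $K$, and this amalgam is unique up to isomorphism over the system. This is proved by induction on $n$, where at each stage one uses categoricity in $\aleph_n$ together with the weak CH inequality $2^{\aleph_n} < 2^{\aleph_{n+1}}$ and another weak-diamond argument to rule out the only obstruction to $(n{+}1)$-uniqueness, namely the existence of $2^{\aleph_{n+1}}$ non-isomorphic atomic models of size $\aleph_{n+1}$. Making the bookkeeping of independent systems, isolation of types over tree unions, and the transfer of amalgamation across cardinals go through simultaneously is the delicate combinatorial core of \cite{Sh87a,Sh87b}.

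Once excellence is in hand, the conclusion is a Morley-style dimension argument carried out inside $K$. Excellence guarantees that for any $\kappa \geq \aleph_1$ and any independent tree of countable atomic models of total size $\kappa$, the primary model it generates has size $\kappa$ and is unique up to isomorphism over the tree. Since $\aleph_0$-stability provides a well-behaved independence relation on $1$-types, every atomic model of size $\kappa$ is prime and minimal over a maximal independent set of realizations of a fixed quasiminimal type, yielding a cardinal invariant (the dimension of that set). Categoricity in all uncountable $\kappa$ then follows by showing this dimension must equal $\kappa$, exactly as in the first-order Baldwin--Lachlan argument, transposed to the excellent atomic class $K$.
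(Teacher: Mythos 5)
This theorem is not proved in the paper at all: it is quoted as one of the classical categoricity theorems and attributed to \cite{Sh87a,Sh87b}, so the only ``paper proof'' is the citation, and your sketch follows exactly the strategy of those cited papers (reduction to an atomic class, weak diamond for amalgamation, excellence by induction on $n$, then a Morley/Baldwin--Lachlan style transfer). At the level of a sketch this is the right skeleton, and indeed the only known route.

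One step, as you wrote it, does not work: the Chang--Scott expansion of an \emph{arbitrary} sentence $\varphi$ yields models of a first-order theory omitting a prescribed set of types, not the atomic models of a complete $T^*$; the bijection with an atomic class requires first replacing $\varphi$ by a \emph{complete} $L_{\omega_1,\omega}$-sentence with an uncountable model. That completion step is not free: it uses Keisler's theorem that an $\aleph_1$-categorical sentence realizes only countably many types in a countable fragment together with Shelah's completion argument, which already consumes the hypothesis $2^{\aleph_0}<2^{\aleph_1}$ --- precisely the point the paper records in Remark~\ref{compsentence}. Once this is done, your parenthetical claim that $\aleph_0$-categoricity ``follows'' is correct but for a different reason than you suggest: a complete sentence has a unique countable model by Scott's theorem, so countable categoricity is automatic after the completion, not a consequence of categoricity in some $\aleph_n$ for the original $\varphi$. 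With that first step repaired, the remainder of your outline (weak diamond ruling out $2^{\aleph_{n+1}}$ models to get amalgamation and $n$-existence/uniqueness, excellence, and the dimension-theoretic transfer to all uncountable cardinals) is the argument of \cite{Sh87a,Sh87b}.
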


\begin{theorem}[Shelah's Categoricity theorem for excellent sentences]
An  {\em  excellent sentence} in
   $L_{\omega_1,\omega}$  is categorical in one
    uncountable cardinal if and only if it is categorical in all uncountable cardinals~\cite{Sh87a}, \cite{Sh87b}.
\end{theorem}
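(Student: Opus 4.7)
The plan is to work within Shelah's framework for atomic classes of a countable fragment. First, I would invoke the standard Chang–Shelah reduction: an $L_{\omega_1,\omega}$ sentence $\phi$ can be replaced by a countable first-order theory $T$ in an expanded vocabulary together with a set $\At$ of complete types such that the models of $\phi$ correspond to the class $\K$ of $\At$-atomic models of $T$ (i.e., models realizing only types in $\At$). Thus the problem becomes: show that an excellent atomic class $\K$ categorical in some uncountable $\kappa$ is categorical in every uncountable cardinal.

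Next, I would set up the local independence machinery available in any atomic class with the amalgamation and joint embedding properties for countable models. One defines $\tp(a/A) \in \Sat(A)$ to not split over a finite $B \subseteq A$ in the usual sense, and shows, for countable atomic $A$, existence of non-splitting extensions. Excellence is then the assertion that for every $n$, every independent system $\langle M_s : s \in \Pscr(n)^- \rangle$ of countable atomic models admits a \emph{unique} (up to iso over $\bigcup_s M_s$) countable atomic prime model $M_n$ over the union. From this one deduces, by induction on cardinality, an existence-and-uniqueness theorem: for each cardinal $\lambda$ and each independent tree of countable models of shape $\lambda$, there exists a prime atomic model over the union of the tree, and any two such are isomorphic over the tree. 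This is the structural core that excellence buys.

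Given the structure theorem, I would define, for any uncountable atomic $M \models \phi$, a dimension $\dim(M)$ as the cardinality of a maximal independent set of realizations of the (unique up to orthogonality) quasiminimal type over the prime model. Excellence plus the uniqueness of prime models over independent systems forces $\dim(M)=|M|$ for every uncountable $M$, and forces any two atomic models of the same uncountable cardinality with the same dimension to be isomorphic (build an isomorphism along an independent tree, extending at each node using uniqueness of the prime atomic model). Categoricity in a single uncountable $\kappa$ says that in any model of size $\kappa$ the maximal independent set has size $\kappa$; by the coordinatization this forces the same equality $\dim(M)=|M|$ in every uncountable cardinality, and hence categoricity throughout the uncountables. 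The converse direction is trivial.

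The main obstacle, and the step I would have to work hardest at, is the $(n+1)$-uniqueness half of excellence used in the induction: showing that the countable prime atomic model over a $\Pscr(n)^-$-independent system is unique up to isomorphism over the system. This is where one needs the full assumption of excellence (as opposed to merely 2-amalgamation, which is free from $\aleph_0$-amalgamation plus stability), and it is precisely the inductive upgrade of $n$-uniqueness to $(n+1)$-uniqueness that powers the extension of dimension-theoretic arguments from the countable setting to arbitrary uncountable cardinals. Everything else — the bookkeeping with splitting, the construction of independent trees of a given shape, and the final isomorphism construction — is essentially forced once the $n$-dimensional amalgamation is in hand.
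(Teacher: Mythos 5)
The paper itself gives no proof of this statement: it is quoted as background and attributed to Shelah (\cite{Sh87a}, \cite{Sh87b}), so there is no internal argument to compare against. Your outline does track the architecture of Shelah's actual proof --- reduction to an atomic class, splitting-based independence over countable sets, existence and uniqueness of prime models over independent $\Pscr(n)^{-}$-systems, induction on cardinality, and a dimension-theoretic transfer --- so the route is the intended one rather than a new one.

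As a proof, however, it has gaps that go beyond compression. The sentence ``Excellence plus the uniqueness of prime models over independent systems forces $\dim(M)=|M|$ for every uncountable $M$'' is false as stated: excellence alone is compatible with non-categoricity (for instance, the models of a countable $\omega$-categorical, $\omega$-stable first-order theory form an excellent atomic class, and such a theory need not be uncountably categorical), so no such dimension equality can be extracted without the categoricity hypothesis. Your next sentence gestures at using categoricity in $\kappa$, but the actual content --- that failure of categoricity in some uncountable $\lambda$ reflects down (or up) to failure in the given $\kappa$, by using excellence to build in cardinality $\kappa$ both a ``full'' model and a model containing a small maximal independent set, and that categoricity yields the unidimensionality needed for a single dimension to determine the model --- is precisely the hard step, and it is asserted (``by the coordinatization'') rather than argued. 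Two further points are glossed: the reduction of a not-necessarily-complete categorical sentence to an atomic class needs Keisler's few-types theorem and passage to a completion (compare Remark~\ref{compsentence} of this paper), since the Chang-style reduction you invoke applies to complete sentences; and the existence of ``the (unique up to orthogonality) quasiminimal type over the prime model'' is not established and is not the form in which the transfer is usually run. In short, the proposal is a correct roadmap of the cited proof, but every load-bearing lemma (n-existence/uniqueness in all cardinalities, unidimensionality from categoricity, the transfer itself) is named rather than proved.
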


\begin{theorem}[Zilber's Categoricity for quasi-minimal excellent classes]
A quasi-minimal excellent class is categorical in all uncountable cardinals~\cite{Zilberpseudoexp}.
\end{theorem}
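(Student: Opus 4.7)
The strategy is to exploit the combinatorial pregeometry coming from quasi\-minimality to define a dimension invariant, and then to show by a back-and-forth argument that any two models of the same uncountable dimension are isomorphic, with excellence providing the crucial step that lets partial isomorphisms be extended past the countable stage.

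First I would verify from the quasiminimality axioms that the associated closure operator $\cl$ (a suitable version of algebraic closure for the class) has the exchange property on every model in the class, so that $(M,\cl)$ is a pregeometry. This gives a well-defined dimension $\dim(M)=|B|$ for any $\cl$-basis $B\subseteq M$. The countable closure axiom --- that $\cl(X)$ is countable whenever $X$ is countable --- combined with the fact that a basis $\cl$-generates $M$, yields that for uncountable $\kappa$ a model $M$ satisfies $\dim(M)=\kappa$ if and only if $|M|=\kappa$. So it suffices to prove that any two models of the same uncountable dimension are isomorphic.

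The isomorphism is then built by a transfinite back-and-forth. Starting from a bijection between $\cl$-bases $B_1\subseteq M_1$ and $B_2\subseteq M_2$ of the same cardinality, one uses the $\aleph_0$-homogeneity built into quasiminimality (uniqueness of the quantifier-free or Galois type of a $\cl$-independent tuple) to extend the map to all of $\cl(B_1)\to\cl(B_2)$. At each later stage one has a partial isomorphism $f\mcolon N_1\to N_2$ between $\cl$-closed sets and must incorporate a new element $a\in M_1\setminus N_1$, choose its image in $M_2$, and close under $\cl$ on both sides. Since adding a single $a$ enlarges the basis by at most one element, the full type of $a$ over $N_1$ is determined by a countable trace inside $N_1$ assembled from the $\cl$-closures of finite independent subsets.

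The hard part, and exactly the role of \emph{excellence}, is ensuring that such local extensions assemble coherently when $N_1$ and $N_2$ already have uncountable dimension. What quasiminimality alone provides is uniqueness and $\aleph_0$-homogeneity over a single countable closed set. What excellence adds is uniqueness of amalgams of independent $n$-dimensional systems of countable closed sets: the closure of any finite $\cl$-independent subset $A\subseteq N_1$ together with countably many `relatives' is determined up to isomorphism by the countable closures of its proper subsystems. Consequently the choice of image for $a$, computed from a countable witness, is globally consistent and extends $f$ canonically. Iterating through $M_1\cup M_2$ of cardinality $\kappa$ produces the isomorphism. I expect the main technical obstacle to be packaging the excellence axiom precisely enough that the back-and-forth step goes through uniformly; the rest is pregeometry bookkeeping and uniqueness of the generic type.
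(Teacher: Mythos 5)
The paper does not prove this theorem at all: it is quoted as one of the ``classical categoricity theorems'' with a citation to Zilber's pseudo-exponentiation paper, so there is no internal proof to compare against. Your outline is essentially the standard argument of the cited literature (pregeometry plus countable closure identifies dimension with cardinality for uncountable models; a bijection of bases is extended to the closures, with $\aleph_0$-homogeneity handling the countable stages and excellence --- uniqueness/amalgamation of independent systems of countable closed sets --- making type determination over uncountable closed sets reducible to countable traces), and at the level of a sketch it is correct; the only presentational difference is that the published proofs organize the extension as an induction over subsets of the basis rather than an element-by-element back-and-forth, and they also use excellence to get existence of models in every uncountable cardinality.
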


\subsection{Pregeometries (matroids) and quasiminimality}

The presence of quasiminimal pregeometries provides an extremely fruitful and natural control of
models in a class (and of their interactions).

 \begin{definition}[Combinatorial Geometry]
A \emph{closure system} is
a set $G$ together with a `closure' relation on subsets of $G$
$$cl: {\cal P}(G) \rightarrow {\cal P}(G)$$
 satisfying the following axioms.

\smallskip

 {\bf A1.} $cl(X)=\bigcup \{ cl (X'): \
X'\subseteq_{fin} X\}$

{\bf A2.} $X\subseteq cl(X)$

{\bf A3.} $cl(cl(X))=cl(X)$

\medskip

$(G,\cl)$ is a {\em pregeometry} if, in addition, we have:

{\bf A4.} If $a\in cl(Xb)$ and $ a \not\in cl(X)$, then $b\in
cl(Xa)$.

\medskip

If points are closed ($cl(\{ a\})=\{ a\}$, for each $a$) the structure is called a \emph{geometry}.

\end{definition}

Pregeometries are virtually the same mathematical objects as matroids.

%\begin{definition}\label{sm} The set of elements $a$ of a structure $M$ such that $M\models
%    \varphi(a)$) is called the \emph{set defined by} the formula $\varphi(x)$. \emph{Definable sets}
%    in $M$ are sets of this form, for some formula $\varphi(x)$. A definable subset $D$  (or its
%    defining formula $\varphi(x)$) is {\em strongly minimal}
%if every definable subset of $D$ is finite or cofinite. A theory is \emph{strongly minimal} if the
%formula $x=x$ is strong minimal.
%\end{definition}

%\begin{definition}\label{smwithparam}
%    In the previous definition, when we use a formula $\varphi(x,\mathbf{y})$ and select a tuple
%    $\mathbf{b}$, the set defined by $\varphi(x,\mathbf{b})$ is said to be defined \emph{with
%    parameters} $\mathbf{b}$. A set \emph{definable with parameters over $B$ in a model  $M$ } is a
%    set of this form, for ${\mathbf b}\in B$.
%\end{definition}

\begin{definition}\label{mtbas}
\begin{enumerate}
\item
A subset  $D$  of a  $\tau$-structure  M  is \textbf{first order-definable} in  $M$
if there is  $\abar\in M$  and an  $\Lscr_{\omega,\omega}(\tau)$-formula  $\varphi(x,\ybar)$  such
that   $D = \{m\in M: M \models \varphi(m,\abar)\}$. If $\abar \in A\subseteq M$, $D$ is definable with parameters from $A$.
%A set is $\emptyset$-definable if the only parameters are in the definable closure of the emptyset. The

\item $\acl_M(A)$ (the {\em algebraic closure} of $A$ in  $M$) is $\{m\in M
    \colon \phi(m,\overline{a}),\overline{a} \in A \}$, where
    $\phi(x,\overline{a})$ has only finitely many solutions in $M$.
\item $\dcl_M(A)$ (the {\em definable closure} of $A$ in $M$) is defined as was
    the algebraic closure, but replacing `finitely many' by `one'.
\item An infinite definable subset $D$  (or its defining formula
    $\varphi(x)$) is {\em strongly minimal}
if every definable subset of $D$ in every elementary extension of $M$ is finite or cofinite.
\item
 A theory is \emph{strongly minimal} if the
formula $x=x$ is strong minimal.
\end{enumerate}
\end{definition}

The notion of type is a crucial tool in model theory.

\begin{definition}
\begin{enumerate}
\item
The \textbf{first order type} of $a$ over $B$ (in $M$), denoted $\tp_M(a/B)$,  is the set of  $\Lscr_{\omega,\omega}$-formulas with parameters from  $B$  that are satisfied in  $M$  (for  $a, B \subseteq M$).

%The \textbf{$\Lscr_{\omega_1,\omega}$-type} of $a$ over $B$ (in $M$), denoted $\tp_{\Lscr_{\omega_1,\omega}}(a/B;M)$,  is the set of  $\Lscr_{\omega_1,\omega}$-formulas with parameters from  $B$  that are satisfied in  $M$  (for  $a, B \subseteq M$ ).
\item The \textbf{quantifier-free type} of $a$ over $B$ (in $M$), denoted
    $\tp_{\rm qf}(a /B\colon M)$, is the set of \emph{quantifier-free}
    first order formulas $\varphi(x,{\mathbf b})$ such that  $M\models
    \varphi(a,{\mathbf b})$ (as before, $\mathbf b$ ranges over tuples of
    $B$).
\end{enumerate}
\end{definition}

In most contexts, when we just say `the type of $a$ over $B$,' we mean the first order type.
Note also that if a property is defined without parameters in  $M$, then it is uniformly defined in all models of $\th(M)$ (the \textit{theory} of $M$, i.e., the set of all $\tau$ sentences that are true in $M$).

Here are three fundamental observations on strongly minimal sets.

\begin{itemize}
\item A strongly minimal set  admits a combinatorial geometry when the closure is taken as $\acl$ (Definition~\ref{mtbas}).

\item There is a unique type of elements in a strongly minimal set that are not algebraic. This is called the {\em generic type} for $D$.
\item In many important examples (e.g. $DCF_0$), the structure of the model is controlled by its strongly minimal
    sets.
\end{itemize}

Shelah's abstract notion of independence (for some first order theories,
crystallized as \emph{non-forking}) weakens the notion of combinatorial
geometry by dropping {\bf A3}; in some desirable cases this property is
recovered on the points realizing a {\em regular type} and in even better
cases the dimensions of the regular types determine the isomorphism type of
the model. However, {\em a priori,} the existence of  a global dimension is
unusual.

% \sidebar{jb nov 23 2:05  Next should be altered/omitted/only
% a paragraph ??

% 11/26: rewritten

% 12/4/22: I don't think strongly minimal has been defined - not sure there is space. but cf next paragraph.
% }

We now look at the generalization of strong minimality, introduced by Zilber, that is central in the connections between
model theory and algebraic geometry described in this paper.

\begin{definition}
[Quasiminimal structure]\label{qmdefstr}
A structure  $M$ is {\em
quasiminimal} if every first
order ($L_{\omega_1,\omega}$) definable subset of $M$ is
countable or cocountable.  Algebraic closure is generalized by saying
$b \in \acl'(X)$ if there is a first order formula with {\bf
countably many} solutions over $X$ which is satisfied by $b$.
% Zilber showed that
%if $M$ is quasiminimal then  $(M,\acl')$ is a closure system.
\end{definition}

%\sidebar{jb Nov 24 10:50 am:  I think the point is that the
%trial definition is replaced by ccp which is weaker (unless unique generic type is assumed. But ccp is sufficient for categoricity even though the trial definition fails.}

\begin{definition}
[Quasiminimal excellent geometry]\label{qmdef}
 Let $\bK$ be a class of $L$-structures such that $M \in \bK$ admits a
 closure relation $\cl_M$  mapping $X \subseteq M$ to $\cl_M(X) \subseteq M$
  that satisfies the following properties.

\smallskip
\begin{enumerate}
\item \textbf{Basic Conditions}
\begin{enumerate}
\item
 Each $\cl_M$ defines a pregeometry on $M$.
\item For each $X\subseteq M$, $\cl_M(X) \in \bK$.

 \item countable closure property (ccp): If $|X| \leq \aleph_0$ then  $|\cl(X)| \leq \aleph_0$.

\end{enumerate}

%
%\sidebar{jb 5/12/23: Check carefully; changed the second homogeneity axiom is applies to the 2-sorted case.}f

\item \textbf{Homogeneity}

\begin{enumerate}
\item %\textbf{Homogeneity over $\emptyset$:}
A class $\bK$ of models has {\bf $\aleph_0$-homogeneity over $\emptyset$}  (Definition~\ref{qmdef}) if the  models of $\bK$
are pairwise qf-back and forth equivalent (Definition~\ref{bfdef})

%If $f$ is a partial monomorphism from $H\in \bK$ to $H'\in \bK$
%taking $X \cup \{y\} $ to $X' \cup \{y'\} $ then $y\in \cl_H(X)$
%iff $y'\in \cl_{H'}(X')$.

\item
A class $\bK$ of models has \textbf{$\aleph_0$-homogeneity over models}
if for any $G
 \in \bK$ with $G$ empty or a countable member of
$\bK$, any $H,H'$ with $G\leq H, G\leq H'$,  $H$ is qf-back and forth equivalent with $H'$ over $G$.
\end{enumerate}
%\begin{enumerate}
%\item
%
% If $f$ is a
%partial $G$-monomorphism from $H$ to $H'$ with finite domain $X$
%then for any $y\in \cl_H(X)$ there is $y'\in H'$ such that $f \cup
%\{\langle y,y'\rangle\}$ extends $f$ to a partial
%$G$-monomorphism.

\item $\bK$ is an {\em almost quasiminimal excellent geometry} if the universe of any model $H\in \bK$ is in $\cl(X)$ for any maximal $\cl$-independent set $X \subseteq H$.
 %If $f$ is a bijection between $X\subset H \in \bK$ and $X'\subset H' \in \bK$
% which are {\em
%separately} $\cl$-independent (over $G$) subsets of $H$ and $H'$
%then $f$ is a $G$-partial monomorphism.
%\end{enumerate}

%\end{enumerate}

\item We call a class   which satisfies these conditions an {\em almost quasiminimal excellent geometry } \cite{BHHKK}.

\end{enumerate}
\end{definition}

An almost quasiminimal excellent geometry with strong submodel taken as $A\leq M$, if $\acl_M(A) =A$,
gives an \emph{abstract elementary class} (AEC)\footnote{See~\cite{Bilgi} for the early history of
the model theory of AECs.}. But
the distinct notion of a quasiminimal AEC (defined in terms of $\leq$ rather than any axioms) is due to \cite{Vaseyqmaec}.
%\sidebar{Check whether the a q m class is a quasiminimal AEC in the sense of Vasey}

% \sidebar{jb 1/13/23: I added the requirement of unique generic type.  \cite[10]{DawHarris} says it is satisfied in their case. But then reading the remark below, I see why I omitted it earlier. I think something about almost quasiminimal is needed here.}

To obtain that the class is complete for $L_{\omega_1,\omega}$, \cite{Kirbyqm, BHHKK} add the requirement of $\aleph_0$-categoricity.

\begin{remark}\label{qmhist} {\rm
% Zilber originally
% required that a quasiminimal structure had a unique generic type
	This definition differs only superficially from those in e.g.
\cite{Kirbyqm}, where the connections with the combinatorial geometry was emphasized by
distinguishing the treatment of elements depending on whether they were in $\cl(H)$. {\bf However,}
\cite{BHHKK} required a quasiminimal structure to have a unique generic
type.
%
% {\bf Uniqueness of the generic type} Suppose $X,X'$ are countable closed ($X =\cl(X)$) subsets of
% $M$, which realize the same type. If $a\in M-X$ and $a'\in M-X'$ then
% $\tp(X,a) =\tp(X,a')$.
This requirement fails in the two-sorted treatment we deal with here;  there
 may be $\acl$-bases in each sort. So
 we replace quasiminimality  with \emph{almost quasiminimality} (less explicit in \cite{BHP}) and
  we thus restore Zilber's first intuition (Definition~\ref{qmdefstr}) that quasiminimality means
  that
all definable sets are countable or co-countable.  }
\end{remark}

%\sidebar{jt 12/18/22:  Order has to be checked. Haven't defined excellence yet and
%haven't discussed the role $n$-amalgamation for increasing $n$. Perhaps because that induction is crucial to Shelah, avoided by Zilber and                                                                                                                                                                    \cite{BHHKK}.}

\begin{remark}[Excellence]\label{exc}{\rm
From Zilber's introduction of the notion in
\cite{Zilberpseudoexp}, it has been known that the axioms
\ref{qmdef}
imply  $\aleph_1$-categoricity. See the exposition in \cite{Baldwincatmon}.
But, without further  `excellence' hypotheses, it was  unknown whether the class had
larger models.
Two  formulations of excellence are 1) \cite{Sh87a,Sh87b}:
$n$-amalgamation of independent systems of models, for all $n<\omega$,
and 2) A local condition on the properties of a `crown' \cite{Kirbyqm}.
Either of these implies the existence of arbitrarily large
models for theories in $L_{\omega_1,\omega}$.
As we discuss in Section~\ref{foex}, influenced by work Hart and Shelah on first order classification theory, the next result (here modified by `almost') clarified the relationship.}

%{\Large JOHN: please check the numbering of those conditions in the previous paragraph - they don't
%    correspond with what we have in \ref{qmdef} here\ldots}

\end{remark}

\begin{remark}{Crucial Fact}\label{cf}
[{Theorem: Bays, Hart, Hyttinen, Kesälä, Kirby}]. Every almost-quasiminimal
class (Definition~\ref{qmdef}) is excellent as described in Remark~\ref{exc}.
Thus, it is categorical in all uncountable cardinalities.
\end{remark}

\section{Modular and Shimura Curves}\label{modshicurves}
\setcounter{section}{4}

We begin with an astronaut's view of the $j$-function and then turn to the
model theoretic treatment of some generalizations.

\subsection{The great confluence}\label{aroundjfunction}

The general form (over a field of characteristic $0$) of an elliptic curve  is
$$y^2 = x^3 + ax + b.$$
At least since Diophantus (3rd century AD), the search for integer solutions for such
equations has been a central question. The cataloguing of such
equations was a major achievement of the 19th century. One key step toward
this classification is to generalize the original problem and look first for
complex solutions. The solution set of an elliptic curve is then a smooth,
projective, algebraic curve of genus one.  It can be thought of as a
`classical torus' $\TT_\tau := \CC/\Lambda_\tau$, where $\tau\in \CC$ and
    $\Lambda_\tau$ is the lattice in $\CC$ (the subgroup of $(\CC,+)$ generated by $\langle 1,\tau\rangle$.

 Klein studied modular and automorphic functions, which provide surprising and deep links
 between geometry, complex analysis and number
 theory. The most famous example  is the $j$-function,
 %defined a  function $j$ that is
 analytic on $\mathbb{H}=\{z:{\rm im}(z) >0\}$, the upper half plane,
 and maps onto $\CC$  and meromorphic  with some
 poles on the real axis and the following remarkable properties.

 \begin{theorem}[Classification of tori by the $j$-function]\label{Klein}
 The following are equivalent:
  \begin{enumerate}
  \item There exists $s = \left[
      \begin{array}{cc}
        a & b\\
        c& d
      \end{array} \right]\in SL_2(\ZZ)$ such that $s(\tau)=\frac{a\tau +b}{c\tau +d}= \tau'$,
  \item $\TT_\tau \approx \TT_{\tau'}$ in the algebraic geometry sense of Definition~\ref{twonot}.
 %  (elliptic curves ---
%    classical tori --- isomorphic as Riemann surfaces), where
%    $\TT_\tau := \CC/\Lambda_\tau$, and $\Lambda_\tau=\langle
%    1,\tau\rangle \leq \CC$ is the group lattice parametrized by $\tau$.
  \item  $j(\tau)=j(\tau')$
  \end{enumerate}

\end{theorem}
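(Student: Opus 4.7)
The plan is to establish the cyclic chain $(2) \Leftrightarrow (1) \Rightarrow (3) \Rightarrow (1)$, using the dictionary between complex tori, lattices in $\CC$, and the modular action of $SL_2(\ZZ)$ on $\HH$. Throughout, I would invoke GAGA/Chow to identify biregular isomorphisms between the smooth projective curves $\TT_\tau, \TT_{\tau'}$ with biholomorphisms of their underlying complex manifolds, so that (2) becomes the purely analytic statement that there is a biholomorphism $\TT_\tau \to \TT_{\tau'}$.

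For $(2) \Leftrightarrow (1)$: any biholomorphism $\TT_\tau \to \TT_{\tau'}$ lifts to the universal cover $\CC$ as $z \mapsto \lambda z$ for some $\lambda \in \CC^\times$ (after normalising to fix the origin), subject to $\lambda \Lambda_\tau = \Lambda_{\tau'}$. Expressing the pushed-forward basis $(\lambda,\lambda\tau)$ of $\lambda \Lambda_\tau$ in terms of $(1,\tau')$ yields integers $a,b,c,d$ with $\lambda = c\tau' + d$, $\lambda \tau = a\tau' + b$, and $ad - bc = \pm 1$; requiring that both $\tau,\tau' \in \HH$ pins the sign to $+1$, giving an element $s \in SL_2(\ZZ)$ realising $\tau = (a\tau' + b)/(c\tau' + d)$. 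Conversely, any such $s$ produces $\lambda := c\tau' + d$ with $\lambda \Lambda_\tau = \Lambda_{\tau'}$, inducing the desired isomorphism.

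For $(1) \Rightarrow (3)$: I would invoke the standard construction of $j$ from Eisenstein series $g_2(\Lambda), g_3(\Lambda)$, namely $j(\Lambda) = 1728\, g_2(\Lambda)^3/\big(g_2(\Lambda)^3 - 27 g_3(\Lambda)^2\big)$. Since $g_2$ and $g_3$ are homogeneous in $\Lambda$ of degrees $-4$ and $-6$, the value $j(\Lambda)$ depends only on the homothety class of $\Lambda$. The hypothesis that $s(\tau) = \tau'$ for some $s \in SL_2(\ZZ)$ gives $\Lambda_{\tau'} = (c\tau + d)^{-1} \Lambda_\tau$, whence $j(\tau) = j(\tau')$.

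For $(3) \Rightarrow (1)$: this is the deeper direction, asserting that $j$ separates $SL_2(\ZZ)$-orbits in $\HH$. I would take the standard fundamental domain $\Fscr = \{z \in \HH : |z| \geq 1,\ |\mathrm{Re}(z)| \leq 1/2\}$ for $SL_2(\ZZ)\backslash \HH$ and prove that $j \colon \Fscr \to \CC$ is a bijection. Extending $j$ to the compactified modular curve $X(1) := \overline{SL_2(\ZZ)\backslash \HH}$, one uses that $j$ is holomorphic on $\HH$, has a simple pole at the cusp $i\infty$ (visible from the $q$-expansion $j(q) = q^{-1} + 744 + \cdots$), and so defines a degree-one map $X(1) \to \PP^1(\CC)$. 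A standard valence-counting (the $k/12$-formula) then shows $j - c$ has exactly one zero in $X(1)$ for each $c \in \CC$, which yields injectivity on $SL_2(\ZZ)\backslash \HH$ and hence $(1)$. This last step is the main obstacle, since everything else is essentially bookkeeping about lattices and the Möbius action, while here the whole weight falls on the analytic content that $j$ is a degree-one rational map from $X(1)$ to $\PP^1$.
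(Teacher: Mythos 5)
Your proposal is correct, but note that the paper itself offers no proof of Theorem~\ref{Klein}: it is quoted as a classical fact (the survey immediately moves on to the model-theoretic formalization), so there is no in-paper argument to compare against. What you have written is a sound sketch of the standard classical proof: the lattice dictionary $\TT_\tau \cong \TT_{\tau'}$ iff $\Lambda_\tau$ and $\Lambda_{\tau'}$ are homothetic iff $\tau,\tau'$ lie in one $SL_2(\ZZ)$-orbit (with GAGA/Chow converting algebraic isomorphism into biholomorphism and the lift to the universal cover being $z\mapsto\lambda z$), homogeneity of $g_2,g_3$ giving $(1)\Rightarrow(3)$, and the valence ($k/12$) formula giving that $j$ is a bijection from $SL_2(\ZZ)\backslash\HH$ onto $\CC$, which is indeed where the real analytic weight of the theorem sits. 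Two small points: your $(2)\Leftrightarrow(1)$ argument produces $s$ with $s(\tau')=\tau$ rather than $s(\tau)=\tau'$, which is harmless since $s^{-1}\in SL_2(\ZZ)$, but worth saying; and in the valence count you should make explicit that at the elliptic points $i$ and $\rho$ the zero of $j-c$ is counted with weight $1/2$ or $1/3$, so ``exactly one zero'' means exactly one $SL_2(\ZZ)$-orbit, which is all that injectivity on orbits requires. Incidentally, your formula $j=1728\,g_2^3/(g_2^3-27g_3^2)$ is the correct one; the displayed formula in the paper (with $g_3^3$ in the denominator) contains a typo.
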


This rather astonishing classical fact paves the way toward modern day
classifications. It provides equivalences between analytic and
number-theoretic notions. Strikingly, $j$ is defined as a rational function
of two analytic functions $g_2$ and $g_3$ (each of them coding so-called
`modularity' properties):
\[ j(\tau) = 12^3\cdot \frac{g_2(\tau)^3}{g_2(\tau)^3-27g_3(\tau)^3}.\]

But where does the word `elliptic' come from?  A meromorphic function is called an {\em elliptic function},  if it is doubly periodic: there are two
$\mathbb {R}$ -linear independent complex numbers
$\omega_1$ and $\omega_2$
 such that $\forall z \in \CC$,
${\displaystyle f(z+\omega _{1})=f(z)}$ and
${\displaystyle f(z+\omega _{2})=f(z)}$. Abel discovered such doubly periodic functions arose from the solutions of elliptic integrals --
originally defined to find the arc length of an ellipse.
Weierstraß used the symbol $\wp$ to denote a family of functions $\wp(z,\Lambda_{\tau})$ where the defining
double sum runs over the elements of the
% depends on the generators   $\tau$ of the
 lattice $\Lambda_{\tau}$, generated by $1$ and $\tau$.  The crucial property
of the function is that every meromorphic function that is periodic
on $\Lambda_{\tau}$ is a rational combination of $\wp(z,\Lambda_{\tau})$ and  $\wp'(z,\Lambda_{\tau})$. This field of functions is
precisely Abel's field of elliptic functions.

Klein's discovery of the $j$ function unified the results of Weierstraß.
%Klein's theorem provides a uniformization of the functions arising from the work of Weierstraß.
In
his famous investigation of the
psychology of mathematical investigation, Hadamard devotes several pages to Poincaré's
generalization of the $j$-function to the family of functions derived from Fuchsian group actions.
The crucial phrase for us
is \emph{`the transformations I had used to
define the Fuchsian functions were identical with those
of non-Euclidean geometry'} \cite[p 33]{Hadamard}.

This completes a very quick summary of the 19th century predecessors of the theory of moduli
spaces, developed in the next section. This study involves complex analysis, actions by a discrete group,
number theory, and non-Euclidean geometry. The crucial model theoretic step is to
formalize in a  vocabulary for two-sorted structures of the form
\[ {\mathfrak A} = \langle \langle H;\{ g_i\}_{i\in
  \NN}\rangle,\langle F,+,\cdot,0,1\rangle,j:H\to F\rangle\]
where $\langle F,+,\cdot,0,1\rangle$ is an algebraically closed field
of characteristic $0$, $\langle H;\{ g_i\}_{i<\omega}\rangle$ is a set together with countably many
unary function
symbols, and $j\colon H\to F$.

In the next section we provide some of the mathematical background for a formal analysis of these
two-sorted structures.

%%%%%

\subsection{Moduli Spaces } \label{modspsec}

Moduli spaces in geometry are parametrized collections of objects, together
with equivalences that allow us to see when two objects are in some sense
`the same', and with families that articulate the variation between the
objects in the collection. Paraphrasing  the important survey \cite{modsp},
`moduli spaces are a {\em geometric} solution to a geometric classification
problem.' They parametrize
%It involves parameterizing a
collections of geometric
{\em objects}, they define {\em equivalences} to say when two objects are the `same', and
establish {\em families} that determine how we allow our objects to vary or modulate.

% \sidebar{jb 11/27/22 Shimura curves in next sentence : are too general here; they are images of more general objects than the hyperbolic plane.}

In model theory, the notion of a uniform family of definable sets has been thoroughly studied.
Such a family is given by a formula of the form $\phi(\xbar,\ybar)$.
Each set in the family is the solution set of $\phi(\abar,\ybar)$ (for some $\abar$), and the set
$\{\abar:(\exists \ybar) \phi(\abar,\ybar)\}$ is an indexing set of the family.
In the algebraic geometry setting, one can require that
 the $\xbar$ fall into a variety  $V$  and the $\ybar$ into a variety $W_{\abar}$. $V$ is a step
 toward the notion of a {\em moduli space}.

Except in \S~\ref{section:notop}, we consider  moduli spaces arising from a
pair $(G,X)$ consisting of a
group $G$ acting on a space $X$. %As we
%describe now,
The algebraic varieties we study arise as  quotients $\Gamma \setminus X$
(for $\Gamma$ a subgroup of $G$, see Definition~\ref{quotdef}).  A modular
curve arises as a connected component of  quotient of $\HH$ by congruence
subgroups (Definition~\ref{disgrpterm}) of $\gl_2({\mathbb{R}})$.  Shimura
generalized the topic to groups acting on wider classes of domains.   Shimura
curves are rather more complicated yet generally share similar categoricity
properties. Shimura varieties of higher dimension raise many new issues that
we sketch in Section~\ref{Shivar}. In this section, we consider only covers
of modular curves by $\HH$.
%% and modular
%curves
%are a very general version of the sequence
%\begin{eqnarray}
% \HH\ {\stackrel{q}{\to}}
%\Sscr \to 1.
%\end{eqnarray}

Here, $\mathbb{H}$ refers, as in the rest of this paper, to the upper half
complex plane (as the set of points: $\mathbb{H}=\{z\in \mathbb{C}:{\rm
Im}(z)>0\} $). $\mathbb{H}$ is  also called the hyperbolic plane (when
endowed with a metric and topology that make it hyperbolic rather than
Euclidean). See~\cite{Miyake} for a detailed description. In all our
examples, the function $p$ maps the hyperbolic plane into   a complex variety.
We consider the action of  $\psl_2({\mathbb{R}})$ on $\HH$ as fractional linear
 transformations:
for\\ $A =\left[
      \begin{array}{cc}
        a & b\\
        c& d
      \end{array} \right]\in \sl_2(\ZZ)$
and $\tau \in \HH$,
$A(\tau) =\left( \frac{a\tau +b}{c\tau +d}\right) $.

The group of bijections  (isometries, $\isom(\HH)$) that preserve the hyperbolic metric of $\HH$ is generated by
$\psl_2({\mathbb{R}})$ and the map $z\mapsto -\overline{z}$; $\psl_2({\mathbb{R}})$  consists  precisely of all those isometries that preserve orientation (e.g. \cite{Katok}).
%For $G\subseteq\sl_2({\mathbb{R}}) $, $G^{\rm ad}$ denotes $G/Z(G)$, which is isomorphic to the subgroup of {\em orientation preserving} elements of $G$.
After outlining here the classical theory of such actions and moduli spaces,
in section \ref{section:QEcurves} we describe
 a model theoretic approach.

% \sidebar{jb 11/27/22: There is subtlety we should be aware of but perhaps avoid. Let
% $\psl^*_2(\RR)$ be $\sl^*_2(\RR)/\{I,-I\}$, where $\sl^*_2(\RR)$ real $2\times 2$ matrices with determinant $\pm 1$.  It is the group $\isom(\HH)$ of all isometries.  }
%\begin{notation} \label{adnot}  For $G\subseteq\sl_2({\mathbb{R}}) $, $G^{ad}$ denotes $G/Z(G)$, which is isomorphic to the subgroup of {\em orientation preserving} elements of $G$.
%
%In this paper $G$ is always $\gl_2(\QQ)$ so  $G^{ad}(\QQ^+) denotes
%$\gl_2(\QQ^+)$ modulo its center ($\pm$ the identity matrix.
%
%
%For background, see \cite{Katok}.More specifically, $G^{ad}(\QQ^+) = G^{ad}(\QQ) \cap G^{ad}(\RR^+) $.
%\end{notation}
%
%{\Large JOHN: we need to clarify the $G^{ad}$ notation at this point!
%
%Maybe it needs to be moved after
%
%}

\begin{definition}[Fuchsian group]\leavevmode \label{fuchgrp}

\begin{enumerate}
    \item A subgroup $G\le \isom(\HH)\approx \psl_2({\mathbb{R}})$ is
        \emph{discrete} if it is discrete in the induced
	topology.
    \item A {\em Fuchsian group} is a discrete subgroup of $\psl_2({\mathbb{R}})$.
\end{enumerate}
\end{definition}
The most important example of a Fuchsian group is $\psl_2(\ZZ)$. Underlying this entire study and
almost one and a half centuries of interactions between number theory and complex analysis
is the remarkable fact that the quotient of $\HH$ by certain  discrete subgroups has the structure
of a Riemann surface \cite[\S 1.8]{Miyake} and even an algebraic
variety which, in important cases, is a moduli space \cite{milnenams}.
%\sidebar{ This needs to be integrated appropriately jb 6/14: Shimura   varieties replace the
%hyperbolic plane by more exotic objects.}
%
%An important step in the way to this generalization was the model-theoretic analysis of the
%classical $j$-mapping (see next section).

\begin{definition}[Quotient of $\HH$ by a group]\label{quotdef}
If a group $G$ acts on a set $X$, $G\setminus X$ has universe the collection
of $G$-orbits of the action. $\pi$ is the canonical map taking $x$ to its
orbit $Gx$. The prototypical example corresponds to $X=\mathbb{H}$.
\end{definition}

\begin{definition} The quotients $V = S(\CC)$ of $\HH$ by a discrete group $\Gamma$ that we
consider are examples of {\em moduli spaces}. $V = \bigcup_{a\in C}V_a$    is
the image of a map $p$ from $\HH$ that acts as a {\em uniformizer} for a
family of varieties $V_a$. Namely
% along with an indexing of a
% family of varieties by elements
% for $a\in \HH$
% $J: a \mapsto V_{a}$ and a group $\Gamma$
for each $a,b \in \HH$, $V_{a} \cong V_{b} $ iff for  some $\gamma \in \Gamma $, $\gamma(a) =b$ iff $p(a) = p(b)$.
\end{definition}

We explored in
Section~\ref{aroundjfunction} the ur-example of a moduli space, elliptic curves as uniformized by
the $j$-function.
The next definition relies on the fact that, while elements of $\psl_2({\mathbb{R}})$ fix $\HH$
setwise, they also act on all of $\CC$.

% \sidebar{jb Jan 11,23: Is a cusp the same thing as a special point?

% jb jan 13, 23. I think the definition is correct as stands.  I don't know if we need to
%discuss cusps at all.}

% \sidebar{Fixed a defective definition of cusp. We may not need to introduce the term at all but just go directly to special points. Most definitions add that $\gamma$ in next definition
% is parabolic. But checking the definition, actually parabolic follows from what I have written.}

\begin{definition}[Cusp]\label{cuspdef} For a discrete subgroup $\Gamma$
of $\psl_2({\mathbb{R}})$:
\begin{enumerate}
\item %$c\in P^1({\mathbb{R}})$ is a {\em cusp} of $\Gamma$ if
We say $c \in {\mathbb{R}} \cup \{\infty\}$ is a {\em cusp} of $\Gamma$ if
    $c$ is the unique fixed point of some $\gamma \in \Gamma$.
\item $P_\Gamma$ is the set of cusps of  $\Gamma$
and  $\HH^* = \HH^*_\Gamma  = \HH \cup P_\Gamma$.
\end{enumerate}

\end{definition}

% \sidebar{jb 12/4/22: Need to clarify when one needs the x-axis to get compactness. Maybe the domain of Fact\ref{quotchar}is stated incorrectly.}

We relate some standard facts (see \cite[p 15]{Harristhe}). The first relies on
the fact that while some of the quotients we study are not compact,
they can be compactified by adding finitely many cusps from ${\mathbb{R}} \cup \{\infty\}$.

% \sidebar{jb 12/15/22:  Harris refers to an earlier edition of Milne's notes that is not
% on the website. However the reference to 5.29 is to something about double cosets.
% What G and $\Gamma$ refer to on pages 12-16 in Harris are quite obscure. It may be G is fixed on 13.  So I am by no means sure I figured out what he meant. I hope it is to
% technical in any case.  What are these complicated coset representations supposed to gain?
% }

    %[Shi71, x1.4],
\begin{fact}\label{quotchar} For any discrete
subgroup $\Gamma \subseteq  \psl_2({\mathbb{R}})$, the quotient $\Gamma
\setminus \HH^*_\Gamma$ is  a compact Hausdorff space that can be given the
structure of a Riemann surface. Therefore if $\Gamma'$ is of finite index in
$\Gamma$, the quotient $\Gamma' \setminus \HH^*_\Gamma$  is a compact Riemann
surface, and is therefore algebraic by the Riemann existence theorem.
 $\HH^*_\Gamma$ is the compactification of the quasi-projective
algebraic variety (so first order definable) $\HH_\Gamma$, .
%$\HH_\Gamma$ that is central below.

\end{fact}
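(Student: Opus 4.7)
The plan is to verify proper discontinuity of the $\Gamma$-action, topologize the cusps so that the completed quotient is compact Hausdorff, equip it with a holomorphic atlas, and then conclude algebraicity from the Riemann existence theorem. First I would check that $\Gamma$ acts properly discontinuously on $\HH$: since the stabilizer of any point of $\HH$ in $\psl_2(\RR)$ is compact (a conjugate of $\mathrm{PSO}(2)$) and $\Gamma$ is discrete, for any compact $K\subseteq\HH$ only finitely many $\gamma\in\Gamma$ satisfy $\gamma K\cap K\neq\emptyset$. This makes $\Gamma\setminus\HH$ Hausdorff, and away from the isolated elliptic fixed points the projection is a local homeomorphism.

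Next I would topologize $\HH^*_\Gamma = \HH\cup P_\Gamma$ by declaring, for the cusp $\infty$, a neighborhood basis of the form $\{\infty\}\cup\{z \mcolon \mathrm{Im}(z)>N\}$ for $N>0$, and transporting these to any other cusp $c$ via a fixed element of $\psl_2(\RR)$ carrying $\infty$ to $c$. One verifies that $\Gamma$ acts by homeomorphisms, that the quotient is Hausdorff, and (assuming, as the geometric context here tacitly does, that $\Gamma$ is Fuchsian of the first kind so that it admits a finite-area fundamental domain with only finitely many inequivalent cusps, e.g.\ via a Ford domain) that the quotient is compact.

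The Riemann surface atlas on $\Gamma\setminus\HH^*_\Gamma$ is built from three families of charts: at a generic image point, pull back the coordinate on $\HH$ via a local inverse of $\pi$; at the image of an elliptic point $z_0$ of order $n$, use $w=\bigl((z-z_0)/(z-\bar z_0)\bigr)^{n}$ to quotient out the cyclic rotational stabilizer; at a cusp of width $h$, uniformize by $q=\exp(2\pi i z/h)$. A routine check shows these patch holomorphically, so the compact quotient acquires the structure of a compact Riemann surface.

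For the finite-index assertion, the inclusion $\Gamma'\hookrightarrow\Gamma$ induces a finite continuous surjection $\Gamma'\setminus\HH^*_\Gamma\to\Gamma\setminus\HH^*_\Gamma$, so the source is compact and inherits a Riemann surface structure by pullback. The Riemann existence theorem (or equivalently the Behnke--Stein construction of enough meromorphic functions) then realizes this compact Riemann surface as a smooth projective algebraic curve in some $\PP^N_\CC$; deleting the finitely many cusp points yields the quasi-projective, hence first-order definable, variety $\HH_\Gamma$. The principal delicacy I expect is the cusp topology and the associated compactness verification; the atlas construction and the algebraization step are by now entirely classical and can be invoked as black boxes.
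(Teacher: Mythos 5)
There is no in-paper proof to compare against: the paper records this as a standard fact, citing Harris's thesis (p.~15) and Miyake, so your job was to reconstruct the classical argument, and your sketch is essentially that argument (proper discontinuity, horoball neighborhoods of cusps, the three chart types at generic, elliptic, and cusp points, and algebraization of a compact Riemann surface via the Riemann existence theorem). You also correctly flag the one real issue with the statement as literally written: for an arbitrary discrete subgroup of $\psl_2(\mathbb{R})$ the compactified quotient need not be compact (take $\Gamma$ trivial, or generated by a single hyperbolic element), so one must tacitly restrict to Fuchsian groups of the first kind (finite covolume, finitely many cusp classes), which is the setting of the paper (congruence subgroups); the cited sources prove the fact under exactly that hypothesis.

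One step of your proposal is not valid as stated: you deduce compactness of $\Gamma'\setminus\HH^*_\Gamma$ from the existence of a continuous finite-to-one surjection onto the compact space $\Gamma\setminus\HH^*_\Gamma$. That implication is false in general (the map $t\mapsto e^{2\pi i t}$ from $[0,1)$ onto the circle is a continuous bijection from a non-compact space onto a compact one); you would need properness of the covering map, which is essentially what is to be shown. The clean repair is simply to apply the first part of the fact to $\Gamma'$ itself: a finite-index subgroup of a first-kind Fuchsian group is again discrete and of the first kind, and it has the same parabolic fixed points, so $\HH^*_{\Gamma'}=\HH^*_{\Gamma}$ and the first statement yields directly that $\Gamma'\setminus\HH^*_\Gamma$ is a compact Riemann surface; the finite-degree map to $\Gamma\setminus\HH^*_\Gamma$ is then automatically a finite branched cover. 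With that substitution your outline matches the standard proofs in the references the paper cites, and the final step (projectivity by Riemann existence, then deleting the finitely many cusps to get the quasi-projective, hence definable, curve $\HH_\Gamma$) is fine to quote as a black box.
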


For the purposes of this paper since the quasiprojective variety $\HH_\Gamma
=\Gamma \setminus \HH$ determines the (classical) algebraic variety (set of
solutions of a system of polynomial equations),  $\HH^*_\Gamma$ we work
hereafter with $\HH_\Gamma$. This is natural from a model-theoretic
standpoint since (in this situation) there are only finitely many cusps and
so the sets differ by only finitely many points.

Notation~\ref{fixG} {\em fixes the group $G$  for the rest of
\S~\ref{modshicurves}.} Setting the determinant as $1$ and modding out the
center guarantees the group action preserves both distance and orientation.

\begin{notation}\label{fixG} Let  $G=\gl_2^{ad}(\QQ)^+
 =_{\rm def} PSL_2({\mathbb{Q}})/Z(PSL_2({\mathbb{Q}}))
 \approx PSL_2(\QQ)$ modulo its center. $\Gamma$ varies over subgroups of $G$
 %Thinking of  it as $PSL_2(\ZZ)$ will give the flavor.
 \end{notation}

We now distinguish two kinds of points in $\mathbb{H}$: `special' points and
`Hodge-generic' points. The equivalence of the following definition with the
usual notion \cite[Definition 2.2]{DawHarris} for Shimura varieties is in
\cite[Theorem 2.3]{DawHarris}.

\begin{definition} [Special points]\label{spptdef}
Fix $\langle \HH,S(\CC),p\rangle$ with $S(\CC)$  biholomophic to $\Gamma
\setminus \HH$. A point $x\in \HH$ is {\em special} if there is a $g\in G$
whose unique fixed point  is $x$.
\end{definition}

We omit the definition of a Hodge generic point arising in algebra, as it
does not enter our discussion; we use only the equivalent characterization
\cite[Prop 2.5]  {DawHarris} given in Fact~\ref{dichot}.1) and the dichotomy
in 2) noted just after that proposition. It is worth mentioning that for a
point the fact of being ``special'' or ``Hodge generic'' does not depend on
the choice of the group $\Gamma$; furthermore, these two notions are
preserved by the action of $G=GL^{\rm ad}_2(\mathbb{Q})^+$.

\begin{fact}\label{dichot} Special and Hodge generic  points \cite[Proposition 2.5]{DawHarris}
\begin{enumerate}[(1)]

\item If $x$ is Hodge generic the only $g \in G$ that fixes $x$ is the
    identity.

\item  Every point in $\HH$ is either Hodge generic or special.
\end{enumerate}
\end{fact}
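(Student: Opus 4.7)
The plan is to reduce everything to a direct analysis of the fixed-point equation for the Möbius action of $G$ on $\HH$, and then to invoke the algebraic characterization of Hodge generic points as the non-CM points (equivalently, points not satisfying a nontrivial rational quadratic).

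First I would write out the fixed-point equation. For $\gamma=\begin{pmatrix}a&b\\c&d\end{pmatrix}$, the condition $\gamma(x)=x$ is the polynomial equation $cx^{2}+(d-a)x-b=0$. If $\gamma$ is nontrivial in $G=PGL_{2}(\QQ)^{+}$, then $(c,d-a,b)\neq (0,0,0)$, so this is a nonzero polynomial of degree at most $2$ with rational (in particular real) coefficients. Its roots therefore come in complex conjugate pairs, and at most one can lie in the upper half plane. In particular, whenever a nontrivial $\gamma\in G$ fixes some point $x\in\HH$, that $x$ is automatically the unique $\HH$-fixed point of $\gamma$, so $x$ is special in the sense of Definition~\ref{spptdef}. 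This one observation drives both parts of the fact.

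For Part (1), I would argue by contrapositive using the algebraic characterization of Hodge generic points: $x\in \HH$ is Hodge generic iff the associated elliptic curve $E_{x}\cong \CC/(\ZZ+\ZZ x)$ has endomorphism ring $\ZZ$, i.e.\ does not have complex multiplication, equivalently iff $x$ does not satisfy any nonzero rational quadratic. If a nontrivial $\gamma\in G$ fixed a Hodge generic $x$, then by the fixed-point equation $x$ would satisfy a nonzero rational quadratic with negative discriminant, forcing $\QQ(x)$ to be imaginary quadratic and $E_{x}$ to be CM, a contradiction. Hence the only $g\in G$ fixing a Hodge generic $x$ is the identity. For Part (2), suppose $x\in\HH$ is not Hodge generic. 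By the same characterization $x$ satisfies a nonzero rational quadratic $\alpha x^{2}+\beta x+\gamma=0$ with $\alpha\neq 0$ and $\beta^{2}-4\alpha\gamma<0$. Setting
\[
M=\begin{pmatrix}-\beta/2 & -\gamma\\ \alpha & \beta/2\end{pmatrix},
\]
one has $\det M=\alpha\gamma-\beta^{2}/4>0$, so $M\in G$, and $M$ is not scalar since $\alpha\neq 0$. Reading off the fixed-point equation of $M$ recovers exactly $\alpha x^{2}+\beta x+\gamma=0$, so $M(x)=x$, and by the first step $x$ is the unique $\HH$-fixed point of $M$. Thus $x$ is special.

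The main obstacle is the algebraic input used in both parts: the equivalence of \emph{Hodge generic} with the condition that $x$ not satisfy a rational quadratic. In the one-dimensional modular setting this is essentially classical CM theory for elliptic curves (the endomorphism ring of a complex elliptic curve is either $\ZZ$ or an order in an imaginary quadratic field), so the dichotomy between CM and non-CM is clean and straightforward to match up with the Möbius-stabilizer computation. For general Shimura data one would have to pass through the Mumford--Tate formalism to get the analogous dichotomy, and this is precisely where the higher-dimensional generalization treated in Section~\ref{Shivar} becomes more delicate.
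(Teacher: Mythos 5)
The paper does not actually prove this statement: it is imported wholesale as a Fact, with the proof delegated to the citation \cite[Proposition 2.5]{DawHarris}, and the surrounding text even declines to define ``Hodge generic''. So there is no in-paper argument to compare against; judged on its own, your argument is correct for the modular-curve setting fixed in Notation~\ref{fixG}. Your fixed-point computation $cx^{2}+(d-a)x-b=0$ is right, the observation that the second root is $\bar x$ (hence outside $\HH$) is exactly what makes the ``unique fixed point'' clause of Definition~\ref{spptdef} hold, and your matrix $M$ is legitimately an element of $G=\gl_2^{ad}(\QQ)^+$ because only positive (not unit) determinant is required there. The comparison worth recording is this: your route is the elementary, case-specific one, trading the Hodge-theoretic definition for the classical characterization ``Hodge generic $\Leftrightarrow$ non-CM $\Leftrightarrow$ $x$ satisfies no nonzero rational quadratic'', after which both parts reduce to high-school algebra on the stabilizer equation. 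The proof behind the citation works instead with the Mumford--Tate formalism, which is what makes the statement available not just for $\Gamma\setminus\HH$ but for the Shimura curves to which the paper later extends the categoricity results; there the dichotomy in part (2) comes from the fact that a proper special subvariety of a curve is a point, and the ``rational quadratic'' criterion has no literal analogue (e.g.\ for quaternionic Shimura curves the rational stabilizer of a special point is a rational torus, not visibly a quadratic equation over $\QQ$). So the one ingredient you take on faith is precisely the content the paper's citation packages; using it is fair, but be aware that it, rather than the Möbius computation, is where all the substance sits, and that your argument as written covers the modular case only, which you correctly flag in your closing remark.
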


%\begin{question} March 13
%\begin{enumerate}
%\item arithmetical and congruence subgroups
%\item $\psi_{\gbar}$ vs $p_{\bbar}$ vs $\phi_{\gbar}$
%; injectivity/ homomorphy
%\item Is $overline \Gamma$ a group of automorphisms of the standard model?
%\item Why does failure of finite index property give many $\tau$-types?
%\end{enumerate}
%\end{question}

%\sidebar{out ? The following definitions provides a crucial link between the actions of groups (the arithmetic
%information they encode) and the analytic treatment we are studying through model-theoretic means.}

Although we are studying the categoricity of the universal cover of a specific modular curve (e.g.
the image of the $j$-function, $\Gamma \setminus \HH$), other modular curves naturally arise in the
analysis.
The study of families of such curves is expounded in \cite[\S 6, 7]{Shimura}.
A key tool to give a uniform treatment to a family is the existence of a common commensurator of
the generating Fuchsian groups. In fact, the members of the family are interalgebraic and  the entire family (indexed by the $\Gamma_N$) is studied in   \cite{DawZil1}.

\begin{definition}\label{disgrpterm} \begin{enumerate}
%\item
%A subgroup of $\sl_2(\ZZ)$ is {\em arithmetic} if it is
%commensurable with $\gl_2(\ZZ)$. \cite[p 6]{Etrev}
%
%The group $\Gamma$ is arithmetic if and only if it has
%infinite index in Comm($\Gamma$) and so there are infinitely many $\Gamma$ special polynomials. \cite[p 184]{Nagnotices}

    \item The groups $\Gamma_N$ ($N$ a fixed integer) are given by
    $$\Gamma_N = \left\{\left[
      \begin{array}{cc}
        a & b\\
        c& d
      \end{array} \right] \in \Gamma : \  b \equiv c \equiv 0, a \equiv d \equiv 1 \  {\rm mod}\ N \right\}.$$

Note that each $\Gamma_N$ has finite index in $\Gamma$ and if $N|M$ then $\Gamma_M \subseteq \Gamma_N$.

    \item Two subgroups  $\Gamma$ and
	$\Gamma'$ of a group $H$ are said to be \emph{commensurable} if
$\Gamma \cap \Gamma'$
is of finite index in both of them.

\item A {\em congruence subgroup} is a  subgroup $\Gamma'$ of $\Gamma$ such
    that
    some $\Gamma_N$ is a finite index subgroup of $\Gamma'$.

\item  The \emph{commensurator} ${\rm comm}(\Gamma)$ of a subgroup $\Gamma$ of ${\psl}_2(\RR)$
is  $$\{\delta \in  {\psl}_2(\RR)\colon \delta \Gamma \delta^{-1}\
\text{\  is  commensurable with}\
 \Gamma \}.$$

\end{enumerate}
\end{definition}

We rely on the following standard fact.

\begin{lemma} The group $G =\gl_2^{ad}(\QQ)^+$  (Notation~\ref{fixG}) is the
 commensurator of any congruence subgroup $\Gamma$ of $SL_2(Z)$.
\end{lemma}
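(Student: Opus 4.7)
The plan is to prove the two inclusions $G \subseteq \mathrm{comm}(\Gamma)$ and $\mathrm{comm}(\Gamma) \subseteq G$ separately. Since any congruence subgroup $\Gamma$ contains some $\Gamma_N$ with finite index, commensurability with $\Gamma$ is equivalent to commensurability with $\Gamma_N$, so without loss of generality I may fix $\Gamma = \Gamma_N$ and identify $\mathrm{comm}(\Gamma_N)$.

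For $G \subseteq \mathrm{comm}(\Gamma_N)$: let $\delta \in G$, and lift it to an integer matrix $M$ with $\det M = d > 0$ (clearing denominators is harmless since we work modulo the center). Given $\gamma = I + Nd\, A \in \Gamma_{Nd}$ with $A$ an integer matrix, I use $M^{-1} = \tfrac{1}{d}\mathrm{adj}(M)$ to compute
\[
M\gamma M^{-1} \;=\; I + N\cdot M A\,\mathrm{adj}(M),
\]
whose entries are integers, with off-diagonal entries in $N\ZZ$ and diagonal entries in $1+N\ZZ$. Hence $M\Gamma_{Nd}M^{-1} \subseteq \Gamma_N$. Since $[\Gamma_N:\Gamma_{Nd}]<\infty$, the two groups are commensurable and $\delta \in \mathrm{comm}(\Gamma_N)$.

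For $\mathrm{comm}(\Gamma_N) \subseteq G$: suppose $\delta \in \psl_2(\RR)$ commensurates $\Gamma_N$. Then $\Gamma' := \Gamma_N \cap \delta^{-1}\Gamma_N\delta$ has finite index in $\Gamma_N$ and satisfies $\delta \Gamma' \delta^{-1} \subseteq \Gamma_N \subseteq \psl_2(\QQ)$. Choose two hyperbolic elements $\gamma_1,\gamma_2 \in \Gamma'$ whose four fixed points on the boundary $\RR \cup \{\infty\}$ are distinct (such elements abound in any congruence subgroup). For each $i$, $\delta$ carries the fixed points of $\gamma_i$ bijectively onto those of $\delta\gamma_i\delta^{-1} \in \Gamma_N$. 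Since the fixed points of elements of $\psl_2(\QQ)$ are either rational or quadratic irrationals, and three point-to-point assignments determine a Möbius transformation, this pins $\delta$ down as a Möbius transformation with coefficients in a fixed quadratic extension of $\QQ$ determined by the fixed-point fields. A Galois-descent step---either averaging $\delta$ over the Galois action, or invoking a third hyperbolic element whose fixed-point field is linearly disjoint from the first two---then forces $\delta$ into $\psl_2(\QQ)$. The orientation-preservation of $\delta$ gives the positive-determinant condition, placing $\delta \in G = \gl_2^{ad}(\QQ)^+$.

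The main obstacle is this final descent step in the second inclusion: while the first inclusion is essentially bookkeeping with principal congruence subgroups, showing that rationality of $\delta$ on the finite set of quadratic fixed points propagates to rationality of $\delta$ as a Möbius transformation requires a careful argument that multiple hyperbolic conjugates produce enough independent algebraic constraints to cut $\delta$ down into $\psl_2(\QQ)$. An alternative, less elementary path is to appeal to Margulis's commensurator arithmeticity theorem for the arithmetic lattice $\psl_2(\ZZ)$, which immediately identifies the commensurator with the group of $\QQ$-points; but the fixed-point argument is both concrete and better suited to the two-sorted model-theoretic setting of this section.
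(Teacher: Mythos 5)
The paper itself gives no argument here: the lemma is invoked as a standard (classical) fact, so your proposal stands or falls on its own. Your first inclusion is essentially right: conjugating $\Gamma_{Nd}$ by an integral lift $M$ of $\delta$ with $\det M=d$ lands in $\Gamma_N$; to literally conclude commensurability you should also run the same computation with $\mathrm{adj}(M)$ (again integral of determinant $d$), which gives $\delta^{-1}\Gamma_{Nd}\delta\subseteq\Gamma_N$ and hence finite index of $\Gamma_N\cap\delta\Gamma_N\delta^{-1}$ on \emph{both} sides. The reduction to $\Gamma=\Gamma_N$ is fine, since commensurable subgroups have the same commensurator.

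The genuine gap is the one you flag yourself: the descent from algebraic to rational coefficients in the second inclusion is not an argument as written. The fixed-point bookkeeping only yields $\delta\in\psl_2(K)$ with $K$ a compositum of the (possibly several) quadratic fixed-point fields --- not a single quadratic extension --- and even that needs you to say which fixed point goes to which (attracting to attracting, using that $\delta\gamma_i\delta^{-1}$ has the same trace, hence the same fixed-point field, as $\gamma_i$). Of your two suggested finishes, ``averaging $\delta$ over the Galois action'' has no evident meaning in $\psl_2$ (a sum of Galois conjugates of a matrix can be singular), and the ``linearly disjoint third element'' route requires actually producing hyperbolic elements of the finite-index subgroup $\Gamma'$ with prescribed, independent fixed-point fields --- true (every real quadratic field occurs for $\psl_2(\ZZ)$, and suitable powers lie in $\Gamma'$ without changing the fixed points), but that is precisely the work left undone. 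Two clean ways to close it: (i) once $\delta$ has algebraic entries, every $\sigma\in\mathrm{Gal}(\overline{\QQ}/\QQ)$ satisfies $\sigma(\delta)\gamma\sigma(\delta)^{-1}=\delta\gamma\delta^{-1}$ for all $\gamma\in\Gamma'$ (both $\gamma$ and $\delta\gamma\delta^{-1}$ are rational), so $\delta^{-1}\sigma(\delta)$ centralizes the non-elementary group $\Gamma'$ and is therefore trivial, whence $\delta$ is Galois-fixed, i.e.\ rational; or (ii) avoid fixed points altogether: conjugation by $\delta$ maps the $\QQ$-span of $\Gamma'$ --- which is all of $M_2(\QQ)$, since $\Gamma'$ contains a principal congruence subgroup --- into $M_2(\QQ)$, so by Skolem--Noether it agrees with conjugation by a rational matrix, which can differ from $\delta$ only by a scalar; orientation-preservation then gives the positive-determinant condition.
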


Because the functions $g\in G$ are in the formal vocabulary, we employ
congruence subgroups $\Gamma_\gbar$ from Notation~\ref{zgbar} rather than the
$\Gamma_N$. The $Z_{\gbar}$ defined in Notation~\ref{zgbar} play a central
role both
 in the quantifier elimination and via an inverse limit in Section~\ref{section:fic}.

\begin{notation}\label{zgbar} With $G$ as fixed in Notation~\ref{fixG},
as  each of the congruence subgroups of $\psl_2(\ZZ)$  act on $\HH$ we can define
for any finite
sequence of the form $\gbar = \langle e,g_2, \ldots, g_n\rangle$  from $G$
 (by convention, $g_1=e$),
\begin{enumerate}
\item $\Gamma_{\gbar}= \Gamma \cap g_2^{-1}\Gamma g_2 \ldots
 \cap g_n^{-1}\Gamma g_n$.
\item Let $p: \HH \rightarrow S(\CC)$.
\begin{enumerate}
\item %$Z_{\gbar} $ is image of $f$ from Lemma~\ref{mmaps}:
    $Z_{\gbar}$ is defined as $\{\left(
    p( x),p(g_2 x),\ldots,p(g_n x) \right) \in S(\CC)^n:x\in \mathbb{H}\} $.

   % $S(\CC) = Z_{\Gamma}$ replaces $V$ for congruence subgroups.

    %\item

    \item
Let $p_{\gbar}:\HH  \rightarrow Z_{\gbar} \subseteq S(
\CC)^n$ be defined by
$$x \mapsto p(\gbar x) = \langle p(x),p(g_1(x)) , \ldots, p(g_n(x))\rangle .$$
\item Let $[\phi_{\gbar}]$ be the map from $\HH_{\gbar}$ onto $Z_{\gbar}$
    (Lemma~\ref{holmo}) given by $[\phi_{\gbar}]{x}_{\Gamma_\gbar} =
    p_\gbar(x)$.
\end{enumerate}
\item %Requiring that the first element of $\gbar$ is the identity $e$
%we  have: $Z_{\gbar}$
%
%
%is biholomorphic with
Let $\HH_\gbar$ denote
$\Gamma_{\gbar}\setminus \HH$.
% When $\gbar =\langle e\rangle$ this map is onto $S(\CC)$ and holomorphic Lemma~\ref{holmo} but not bijective \cite[Example 6.2]{Etrev}
%\cite[p 11]{DawHarris}.
\end{enumerate}
\end{notation}

All the previous are well-defined by our choice of $p$ and $\Gamma$.

%The curves $Z_{\gbar}$ provide a tool for studying the algebraic behavior of the projection function $p$ by giving the image in $V$ of $\Gamma_{\gbar}\setminus \HH$.

The following  lemma \cite[3.31]{Etrev} is central to Section~\ref{galrepsec}. Its proof uses
Shimura theory very heavily.

\begin{lemma}\label{holmo}

The map $[\phi_\gbar]$ is  bijective on the Hodge generic points and the
image $Z_\gbar$ is a {\bf variety} contained in $\Sscr^{n}(\CC)$, $n=
\lg(\gbar)$. Moreover, \cite[p 17]{Etrev}, for all $\gbar$, $Z_\gbar$ is
defined over the maximal Abelian extension $L$ of the field of definition,
$E$,  of $S$.
\end{lemma}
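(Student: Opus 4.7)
The plan is to realize $[\phi_\gbar]$ as the map induced by a product of Hecke-type finite covers $\HH_\gbar \to S(\CC)$, so that well-definedness, bijectivity on the Hodge-generic locus, and the algebraic and arithmetic nature of $Z_\gbar$ all reduce to standard facts about modular curves. Well-definedness is immediate: for $\gamma\in\Gamma_\gbar$, Notation~\ref{zgbar}(1) forces $g_i\gamma g_i^{-1}\in\Gamma$, hence $p(g_i\gamma x)=p(g_i x)$ and $p_\gbar(x)$ depends only on $x\Gamma_\gbar$. For injectivity on the Hodge-generic locus, suppose $p_\gbar(x)=p_\gbar(y)$ with $x$ Hodge generic. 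For each $i$ pick $\gamma_i\in\Gamma$ with $g_i y=\gamma_i g_i x$, and set $h_i=g_i^{-1}\gamma_i g_i \in g_i^{-1}\Gamma g_i$, so $y=h_i x$ for every $i$. Since Hodge-genericity is $G$-invariant (the remark just before Fact~\ref{dichot}), $y$ is Hodge generic as well, and each $h_1 h_i^{-1}\in G$ fixes $y$; by Fact~\ref{dichot}(1) this forces $h_1=h_i$ for all $i$. Hence $h_1\in\bigcap_i g_i^{-1}\Gamma g_i=\Gamma_\gbar$, so $x$ and $y$ lie in the same $\Gamma_\gbar$-orbit. Surjectivity onto $Z_\gbar$ is built into the definitions.

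To see that $Z_\gbar$ is a variety, I would exhibit $[\phi_\gbar]$ as an algebraic morphism by factoring its $i$-th coordinate as
\[ \HH_\gbar \twoheadrightarrow (g_i^{-1}\Gamma g_i)\setminus \HH \xrightarrow{\sim} \Gamma\setminus \HH = S(\CC), \]
where the first arrow is a finite algebraic cover (because $\Gamma_\gbar$ has finite index in $g_i^{-1}\Gamma g_i$) and the second is the algebraic isomorphism induced by left multiplication by $g_i$. By Fact~\ref{quotchar}, both $\HH_\gbar$ and $S(\CC)$ are quasi-projective algebraic varieties, so the product $[\phi_\gbar]\colon \HH_\gbar\to \Sscr(\CC)^n$ is a regular morphism with constructible image. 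Passing to the compactifications $\HH^*_{\Gamma_\gbar}$ and to the projective closure of $\Sscr(\CC)^n$, the map extends to a morphism of complete varieties whose image is closed and hence projective; removing the finitely many cusp images recovers $Z_\gbar$ as a quasi-projective subvariety of $\Sscr^n(\CC)$.

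For the descent to $L$, one recognizes $Z_\gbar$ as a generalized Hecke correspondence: when $n=2$ and $\gbar=(e,g)$ it is exactly the classical correspondence $T_g$ on $S\times S$, and for larger $n$ the variety $Z_\gbar$ is cut out in $\Sscr^n(\CC)$ by imposing $(z_1,z_i)\in T_{g_i}$ for $2\le i\le n$. By Shimura's reciprocity law (see~\cite[\S 6--7]{Shimura}), the modular curve $S$ and each of its Hecke correspondences are defined over abelian extensions of the reflex field $E$, with the correspondence $T_g$ attached to level-$N$ data living over $E(\zeta_N)\subseteq L$. Taking the appropriate compositum yields that $Z_\gbar$ is defined over $L=E^{\rm ab}$.

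The main obstacle is the second step. The raw set-theoretic description $\{(p(x),p(g_2 x),\ldots,p(g_n x)):x\in\HH\}$ is not manifestly algebraic, and identifying it with the image of a morphism from $\HH_\gbar$ depends on pairing the Hodge-generic bijectivity of the first step (which controls the behaviour away from special points) with properness of the compactifications (which promotes constructibility to closedness on the complete model). Once $Z_\gbar$ is recognized as a variety, the arithmetic conclusion is a routine invocation of Shimura theory for $\gl_2$; the delicate point is verifying that the Hecke-theoretic image really matches the set-theoretic definition globally, so that no extra components sneak in at the boundary.
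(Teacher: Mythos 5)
First, note that the paper itself gives no proof of Lemma~\ref{holmo}: it is quoted from \cite[Prop.~3.31]{Etrev}, with the explicit warning that its proof ``uses Shimura theory very heavily'', so your attempt is being measured against the cited literature rather than an in-paper argument. Your first two steps are essentially sound. Well-definedness of $p_\gbar$ on $\Gamma_\gbar$-classes and injectivity of $[\phi_\gbar]$ on Hodge generic classes follow just as you say from Fact~\ref{dichot}(1) together with the $G$-invariance of Hodge genericity (and the convention $g_1=e$, which puts your $h_1=\gamma_1$ in $\Gamma$ and hence in $\Gamma_\gbar$). The algebraicity of $Z_\gbar$ via the factorization of each coordinate through the finite cover $\HH_\gbar\rightarrow (g_i^{-1}\Gamma g_i)\setminus\HH$ followed by the biholomorphism induced by $g_i$, then compactification, Chow/GAGA, and the observation that cusps (lying in $\mathbb{P}^1(\QQ)$, which $G$ preserves) map to cusps, is a correct and standard argument in the curve case.

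The genuine gap is in the descent to $L=E^{ab}$. Your identification of $Z_\gbar$ with the locus $\{(z_1,\dots,z_n)\colon (z_1,z_i)\in T_{g_i},\ 2\le i\le n\}$ is false for $n\ge 3$: since each correspondence $T_{g_i}$ generically has degree greater than $1$ over the first coordinate, that locus is a finite union of irreducible curves (essentially the various $Z_{(e,g_2\delta_2,\dots,g_n\delta_n)}$ as the $\delta_i$ range over coset representatives), of which $Z_\gbar$ is only one component. Consequently ``take the compositum of fields of definition'' does not yield the conclusion: a set cut out by equations over $L$ can have its components permuted by $\mathrm{Aut}(\CC/L)$, and proving that the particular component $Z_\gbar$ is fixed is exactly the arithmetic content of the lemma. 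What is actually needed is Shimura's theory of canonical models for the whole tower of curves $\Gamma'\setminus\HH$ with $\Gamma'$ commensurable with $\Gamma$, together with the reciprocity law, which shows that $\HH_\gbar$ carries a model over an abelian extension of $E$ and that the maps inducing $[\phi_\gbar]$ are morphisms of canonical models defined over $E^{ab}$; the image $Z_\gbar$ of such a morphism is then defined over $L$. This is precisely the ``heavy Shimura theory'' input in \cite{Etrev}. Your closing worry about extra components sneaking in at the boundary is misplaced: the problematic extra components appear already away from the cusps, in the fiber-product description itself.
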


\begin{remark}\label{defconf} {\rm From the model theoretic standpoint, it makes
 no sense to say the $[\phi_\gbar]$ are definable since their domains $\HH_\gbar$ are not.
 While the maps $[\phi_\gbar]$ are bijective on Hodge generic points, they may
 identify special points.}
\end{remark}

\subsection{Quantifier Elimination in Modular and Shimura Curves }\label{section:QEcurves}

We now lay out the vocabulary and first order theory for studying modular
curves. The mathematical input is a Fuchsian group $\Gamma$ acting on
hyperbolic space $\HH$ and the image curve $S(\CC) = \Gamma \setminus
\HH^*_\Gamma$ (Definition~\ref{cuspdef}) with a standard model $\pbar =
\langle \HH,S,p\rangle$. The structure of a discrete group  is unwieldy from
a traditional model theoretic standpoint because its first order theory is unstable and
undecidable. Just as modules are usually studied in model theory by adding
unary function symbols $f_r$ for the elements of the ring, in order to
represent the action of $G$ on $\HH$, we add symbols $f_g$  for $g\in G$ as
unary functions that act on $\HH$. We thus use a two-sorted presentation of
our structures: a sort for the domain, a sort for the target, and a map $p$ connecting them.
%$V$ is a curve contained in $\CC$.
%We formalize this as follows.
%
%For Aa Structure $\Pbar$ In A Vocabulary $\Sigma$, $\Th(\Pbar)$ Denotes The Collection Of $\Sigma$-Sentences
%True Of $M$.  We Will Define For A  Structure $\Pbar$ A Set Of Sentences That Are Intended Imply All The Sentences In  $\Th(\Pbar)$. It Will Be Denoted $T(\Pbar)$. It is crucial to distinguish these theories until the completeness of the axiomatization is established.

% We need to study the action of Fuchsian groups.
%But

\begin{remark}[Sorts]\label{sortexp} {\rm A two-sorted structure interprets two
sort symbols and additional relation and function symbols with the
understanding that each such relation/function either is restricted to one of
the predicates or explicitly connects them.}
\end{remark}

\begin{notation} [The formal vocabulary $\tau$]\label{mcvocab}
 {\rm The two-sorted vocabulary $\tau$ consists of the sorts (unary predicate symbols)  $D$ (the
     covering sort), $S$ the target sort, and  a  function $q$ mapping $D$ onto the sort $S$.

% The countable group $G$ is encoded by a family of unary functions $f_g$ that act on the domain $D$.

We write $\tau_G$ for the vocabulary of the first sort with $G =G^{ad}(\QQ^+)$. The second $\tau_F =
\Rscr $ where  $\Rscr$  is the set of formulas in  $\{+,-,0,1,\times\}$ specified in
Definition~\ref{stmod}.  $\tau$ is $\tau_G \cup \tau_F \cup \{p\}$.
There are {\em constant symbols} for each element of the field $E^{ab}(\Sigma)$ defined in
Notation~\ref{stmod}.  We use $f_g$ to name the functions
acting on $D$, but often write the shorter $g(x)$ or $gx$ instead of $f_g(x)$.}
\end{notation}

%\sidebar{DawHarris write  $S(F)$ for what I call $V$ here. I am using $V$ for the general case and use $S(F)$ for examples of modular curves.

%Perhaps $\Gamma$ should be required to be a congruence subgroup in Notation ~\ref{stmod}.  I only discovered at the end of the Proof of Proposition 3.3 that there was a relevant $\Gamma$; it magically appears.

%}

The following notation is essential to understand the Axioms~\ref{mcax}. Note
in the prototype $q$ is replaced by the known covering map $p$.

\begin{notation}\label{stmod} {\rm  The {\em standard model} for a
{\em modular curve} determined by a Fuchsian group
 $\Gamma \subseteq     G = G^{ad}(\QQ^+)$ will consist of a $\tau$-structure
  $\pbar =\langle \HH, S, p\rangle$ with
the domain $\HH$,   the variety $S(F)$ over the algebraically closed field
$F$ defined by $\Gamma \setminus \HH $, and $\Rscr$ the set of
all Zariski closed relations on $S(F)^n$ (for all $n$) with constants from a
field $E^{ab}(\Sigma)$ that are true in $F$. $E^{ab}$ is  the  maximal abelian extension of the
defining (reflex) field $E$ of $S$.  $E^{ab}(\Sigma)$ is the extension of $E^{ab}$ ($F_0$ in
\cite[p 19]{Etrev})  obtained by adding  the
coordinates of the ($\leq \aleph_0$) special points,
 and closing to a field.}

 \end{notation}

 \begin{notation}\label{thnot} {\rm For a structure $\pbar$, we write  $\th(\pbar)$ for the complete
 first order theory of all sentences true in $\pbar$ and $T(\pbar)$ for the specified set of axioms
 true of $\pbar$. Clearly, $T(\pbar)\subseteq Th(\pbar)$.}
 \end{notation}

We must distinguish  $\th(\pbar)$  from its subset $T(\pbar)$ until
we prove $T(\pbar)$ is a complete axiomatization of $\th(\pbar)$.

%One of the first goals (attained in Theorem~\ref{qe}) is to show that is complete (implies each sentence in
%$\th(\pbar)$).  These theories must be distinguished until the completeness of the axiomatization is established. The axioms below are all in $\th(\pbar)$.

%\sidebar{jb 2/4/23:Note that these special point axioms are not those in DawHarris (which are ill-formed).  I don't know if we should say anytffhing about that.
%Althought that might be something in the note to the referees.}

\begin{definition}[First Order Axioms]\label{mcax}
    $T(\pbar)$ is the following  collection of first order sentences that are to hold
in a structure $\langle D,S(F),q\rangle$.
\begin{enumerate}
\item Each sentence in $\th(\langle\HH, \{f_g:g \in G\rangle)$. These
    include `Special Point axioms' $SP_g$:
For each  $g\in G$ that fixes a unique point in $D$

%point $x \in X$ that is fixed by a group element $g_x$:
$$\forall x, y \in D
[(g(x) =x \wedge  g(y) =y) \Rightarrow x=y] $$
\item  $\th(S(\CC),\Rscr)$ ($\Rscr$ from Definition~\ref{mcvocab}.)

%\sidebar{av (4.7.23): this $\Rscr$ has not appeared. Is this to be a chosen
%    family of sets?}
% $S$ is endowed with
% (quantifier-free since the background $F$ is algebraically closed)
% $\tau_F$-definable relations restricted to $S$.
\item The covering map; for each $\gbar \in G^m$ and all
$m<\omega$:

\begin{enumerate}
\item $Mod^1_{\gbar}$,
$$\forall x \in D \ (q(g_1(x), \ldots q(g_m(x)) \in Z_{\gbar})   $$
\item  $Mod^2_{\gbar}$: $$\forall z \in Z_{\gbar} \exists x  \in D
(q(g_1(x)), \ldots q(g_m(x)) =z ) $$
\item $${\bf MOD}  = \{ Mod^1_{\overline{g}}\wedge Mod^2_{\overline{g}}: \overline{g}\in G^{m},
    m<\omega\}$$
%    \bigcup_{\gbar}  Mod^1_{\gbar} \meet Mod^2_{\gbar} $$
\end{enumerate}
%\item $SP_g$ For each  $g\in G$ that fixes a unique point in $D$
%
%%point $x \in X$ that is fixed by a group element $g_x$:
%$$\forall x, y \in D
%(g(y) =y \wedge  g(x) =x \Rightarrow x=y) $$
%where $c_x$ is the constant $p(x)$.

%\end{enumerate}
\end{enumerate}

\end{definition}
Note that $\bf{MOD}$ is a countable collection of first order sentences.

%\sidebar{3/10/23 4:28 Likely this should be moved near definition \ref{zgbar}.  As, Sebastian says  Lemma~\ref{holmo} is false ( without the restriction to bijective on HODGE generics, in the submitted version for many choices of $\gbar$. But see sidebar after \ref{holmo}.  }

% If $g,g'$  fix $d,d'$ and there is no $h \in \Gamma_{\gbar}$ with $gh=g'$, $q_{\gbar}(d_g) \neq q_{\gbar}(d'_g)$ and its image
%% as $d_{g}$.  }

\begin{notation}\label{namesp} {\rm By the choice of $E^{ab}(\Sigma)$, special points belong to $\dcl\left( \emptyset \right)
	$. Therefore,
	we can name each one of them by $d_g$, where $g \in G$ fixes $d_g$.
 Any $g$ that fixes a point is in $G -\sl_2(\ZZ)$ \cite[Lemma 3.18]{Etrev}.  There will be
 distinct $g_1,g_2$ that fix the same point (e.g. if $g_2 = g_1^2$). If so,    $T(\pbar) \vdash d_{g_1}
 = d_{g_2}$
%  maps all of these
% elements to same point.
 The theory of $(D,G)$ contains the uniqueness axiom
 (Definition~\ref{mcax}.1) that entails $g(d_g) = d_g$.}

%\sidebar{jb mar 8 What? $q(d_g) =q_g$ where $g$ is the fixing group element. }

%NO  but there are eventually such names. These name will be in the vocabulary which depended on $E^{ab}(\Sigma)$ (recall $\Sigma$ is the set of special points).
\end{notation}
The cover sort is  a set with unary functions. Both its theory (since the
universe is a union of orbits) and that of the field sort (since
algebraically closed) are strongly minimal  and quantifier eliminable.
%The Axioms~\ref{mcax} 1-4 are a collection  of first order sentences that are true of the standard model.

% \sidebar{jb 1/17/23: DawHarris call it $T(\pbar)$. It would be good to standardize with $p$
% as the standard map and $q$ as the map in the language. I have tried to do that it in this section. Note that $p$ does not appear in the sentences of $T(\pbar)$. }

\begin{definition}\label{bfdef} {\rm  We say two structures $M$ and $N$ are {\bf qf}-back and forth
    equivalent if the system $I$ of  partial isomorphisms of $M$ and $N$ between isomorphic {\em
finitely generated substructures} satisfies the back and forth condition: For
each $f \in I$ and each $m \in M - \dom f$, there exists an $n \in N$ such
that $f\cup \{\langle m,n\rangle \} \in I$, and symmetrically, for each $n\in N-\im\, f$,
there exists $m\in M$ such that $f\cup \{\langle m,n \rangle \} \in I$. In this situation $\dom
f$ is
 definably close.} % of the arguments of $f$.}
% Write this
%as $M \cong_p N$.
\end{definition}

\begin{notation}\label{barnot}{\rm We  write $\gbar(x)$ for $(g_1 (x), \ldots g_n (x))$ where
	$\gbar$ has length $n$ and begins with $e$. And then $\gbar(\xbar)$
denotes the sequence of length $nm$ obtained when $\gbar$ is applied to each
element of a sequence $\xbar \in  (D)^m$.  When convenient we write $gx$ or
$\gbar x$ for the action, omitting the parentheses.}
\end{notation}

%\sidebar{(av 4.7.23) We have not defined $X^+$}

We now sketch the proof of Theorem~\ref{qe} that $T(\pbar)$ axiomatizes a
complete, quantifier eliminable $\tau$-theory.

%\sidebar{jb 7/5/23: I commented out some sidebars where I had made simple
%corrections. }
%\sidebar{jb 8/19 couldn't get right type face for rg.}
\begin{definition}[The back and forth]\label{ps} {\rm
%The proof idea is standard in model theory prove that the equivalence relation
%of `same quantifier free' type is a back-and-forth on $n$-tuples as $n$ varies.
%\sidebar{jb 1/17/23: DawHarris work with finitely generated subsets. Is that necessary?}
%\sidebar{jb 1/18/23: notation check. I am just writing $G$; it may be we need a few lines back to say that for now $G =G^{ad}(\QQ)^+$.}
Fix two models $\qbar =\langle D,S(F),q\rangle$ and $\qbar' =\langle D',S(F'),q'\rangle$
 of $T(\pbar)$. We define the {\bf
qf}-back-and-forth system  $I$ of substructures  of  $\qbar$ and $\qbar'$ For
each $f \in I$, $\dom f$ and ${\rg \  f}$  are each finitely generated over
$E^{ab}(\Sigma)$ .
 A typical member $f$ of the system for $\qbar$ has
 $\dom f = U = U_D \cup
U_S$. Since $U$ is finitely generated, $U_D$ consists of the $G$-orbits of a
finite number of $x\in D$; $U_S$ is $S(L_U)$
 where $L_U$
is the field generated by $E^{ab}(\Sigma)$ (since the elements of
$E^{ab}(\Sigma)$ elements are named), the coordinates of the $q(x)$ for $x
\in U_D$ and finitely many additional points of $F\cap U$.  Note that the
additional points determine finitely many new field elements since $q$ is
constant on each orbit, so the field remains finitely
 generated. Define a
similar subsystem for $\qbar'$,  labeling by putting  primes on corresponding objects.
By Lemma~\ref{dichot} every point of $D$ is either special and so named in the vocabulary
(Remark~\ref{namesp}), or Hodge generic. Thus we can ignore the special points in building the back
and forth system.
}
\end{definition}

%[Proof Sketch]

%The result is easy for special points as for each special point there is a (unique? not needed) $g_x$ such that $f_{g_x}(x) = x$ and $f_{g_x}$ is in the vocabulary, for
%any $x\in U_D$, $f(x)$ is the unique element $x'$ of $U'_D$ satisfying
%$f_{g_x}(x') = x'$.
%\sidebar{uncommented jb 7/5/23 referenced in proof of 4.3.12   :
Suppose $f$ is an isomorphism between $U \subseteq \qbar$ and $U' \subseteq
\qbar'$. Then $f$ restricts to a $G$-equivariant (elements in the same orbit
have the same image) injection of $U_D$ into $U_{D'}$ and an
 embedding of $S(L_U)$ into $S(F')$ induced by an embedding   $\sigma$ of $L$ into $S(F')$,
 that fixes $E^{ab}(\Sigma)$.

%
% For $x \in \qbar - U$, we must find $x' \in U'$ so that $f \cup (x,x')$ generates
%an isomorphism between the structure generated by $U \cup \{x\}$ and that by
%$U' \cup \{x'\}$.
%% \sidebar{jb 1/22/23: Another missed case in DawHarris is pointed to  in the next sentence and actually dealt with below.}
%We first discuss the case that $x\in D$ and later with $x \in S$.
% $x'$ exists as $U'_{D}$ is closed under
%action by $G$.  Since the coordinates of special points are in
%$E^{ab}(\Sigma)$, whose points are all named, for a special point $x$, $x'$ must equal $x$.

 %It is straightforward to check that if $x$ is a special point that

%\sidebar{
%In 3.3, I agree with what is actually proved. Namely, that the $L_G$ type
%of element $x$ of $D$ is determined by $L$ -type of $q(x)$.  So then the result is easy since the interesting part of $L$ -type of $x$ IS the $L$-type
%of $q(x)$. So 3.3 seems ok and is summarized in the next claim.
%}

Note that the following claim is for arbitrary finite sequences $\gbar$, but only singleton $x$.
The type $r_d$ of an infinite sequence (here represented by an infinite tuple of variables $\vbar$)
includes the types of $\gbar x$ for any finite $\gbar$.

The main consequence of the following claim is that we may reduce types of points in the domains
sort to quantifier-free types of their images in the field sort.

\begin{claim}\cite[Prop 3.3]{DawHarris}\label{3.3} If $d \in D-U_D$ is
Hodge generic:
$$r_d(\vbar) \models \tp_{qf}(d/U),$$
where $r_d(\vbar) =\bigcup_{\gbar \in G} \tp_{qf}(q(\gbar(d)) /U) =
\tp_{qf}(\langle q(g  (d)): g\in G\rangle /U)$.
\end{claim}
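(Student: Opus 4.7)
The plan is to take an arbitrary quantifier-free $\tau$-formula $\phi(x,\bar u)$ over $U$ (with $x$ a variable of sort $D$) and show that its truth value at $d$ is forced by $r_d(\vbar)$. I first reduce $\phi$ to a Boolean combination of atomic formulas. Since the only function symbol crossing sorts is $q$, and every $D$-term is an iterate of some $f_g$ on a variable or a named element of $U_D$, the atomic $\tau$-formulas with free variable $x$ fall into exactly three kinds:
\begin{itemize}
    \item[(i)] $D$-equalities $f_g(x)=f_h(x)$ for $g,h\in G$;
    \item[(ii)] $D$-equalities $f_g(x)=v$ for $v\in U_D$ and $g\in G$;
    \item[(iii)] $S$-sort polynomial relations $P(q(f_{g_1}(x)),\ldots,q(f_{g_n}(x)),\bar u_S)=0$ with $\bar u_S\in U_S$.
\end{itemize}

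Kind (iii) is immediate: substituting $v_{g_i}$ for $q(f_{g_i}(x))$ yields a quantifier-free $S$-sort formula in $\vbar$ over $U_S$ that is itself a member of $r_d(\vbar)$ (or of its negation). Kind (i) reduces to whether $h^{-1}g$ fixes $x$; by Fact~\ref{dichot}(1), since $d$ is Hodge generic the only element of $G$ fixing $d$ is the identity, so $f_g(d)=f_h(d)$ iff $g=h$ in $G$, a fact decidable from the symbols alone. To ensure that a realizer $d'$ of $r_d$ is itself Hodge generic, I would invoke the special-point axioms $SP_g$ of Definition~\ref{mcax} together with the fact that $r_d$ pins down the realizer as distinct from every named special point $d_{g_0}$ (the coordinates $q(g(d))$ disagree with $q(g(d_{g_0}))$ for at least one $g$, and this disagreement is a quantifier-free formula of $r_d$).

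The delicate case is (ii). Here $f_g(d)\neq v$ because $U_D$ is $G$-closed and $d\notin U_D$. To derive this from $r_d$, observe that $q(f_g(d))\notin U_S$: if $q(f_g(d))=q(u)$ for some $u\in U_D$, then $f_g(d)$ and $u$ lie in the same $\Gamma$-fibre of the covering map, so $f_g(d)\in\Gamma\cdot u\subseteq G\cdot u\subseteq U_D$, forcing $d\in U_D$---a contradiction. Hence for every $g\in G$ and every $w\in q(U_D)\subseteq U_S$ the quantifier-free formula $v_g\neq w$ belongs to $r_d(\vbar)$. Evaluating at a realizer $d'$ and running the same $\Gamma$-fibre argument in reverse gives $f_g(d')\notin U_D$, whence $f_g(d')\neq v$ for every $v\in U_D$.

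The main obstacle I expect is the rigorous translation between the $D$-sort fact $f_g(d)\neq v$ and its $S$-sort shadow $q(f_g(d))\neq q(v)$: the covering map $q$ is not injective, since it identifies $\Gamma$-orbits, so inequality downstairs does not automatically follow from inequality upstairs nor the reverse. The $\mathbf{MOD}$ axioms provide the necessary bridge between cover and base, but they must be used globally across all $g\in G$, not merely at finitely many $\gbar$, to block every potential $\Gamma$-coincidence that could push a realizer of $r_d$ back into $U_D$ under some element of the $G$-action. This is exactly why the infinite sequence $\vbar$ indexed by all of $G$, rather than any finite truncation, appears in the statement of the claim.
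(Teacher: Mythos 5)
Your overall reduction is the same as the paper's: split the quantifier-free $\tau$-formulas at $d$ into the field-sort formulas (which are literally the content of $r_d(\vbar)$) and the purely $D$-sort conditions ``$x\neq v$ for $v\in U_D$'' and ``$x\neq gx$ for nonidentity $g$'', and then argue that the latter are forced by formulas already in $r_d$. The genuine problem is in your treatment of kind (ii). The step ``if $q(f_g(d))=q(u)$ for some $u\in U_D$ then $f_g(d)$ and $u$ lie in the same $\Gamma$-fibre, so $f_g(d)\in\Gamma\cdot u\subseteq U_D$'' presupposes that the fibres of $q$ are single $\Gamma$-orbits. That is exactly the $L_{\omega_1,\omega}$ standard-fibres axiom $SF$ of Notation~\ref{infax}, which is \emph{not} part of the first order theory $T(\pbar)$; Claim~\ref{3.3} is stated for models $\qbar$ of $T(\pbar)$ as in Definition~\ref{ps} and is applied in the proof of Theorem~\ref{qe} to $\omega$-saturated such models, where a fibre of $q$ can contain many distinct $G$-orbits. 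The ${\bf MOD}$ axioms you invoke as the ``bridge'' only give that the tuples $q(\gbar(x))$ lie in $Z_{\gbar}$ and that these maps are onto $Z_{\gbar}$; they say nothing that lets you pass from equality of images back to $\Gamma$-equivalence upstairs. The same objection applies to your genericity step: the assertion that for every named special point $d_{g_0}$ some coordinate $q(g(d))$ must disagree with $q(g(d_{g_0}))$ is again a fibre-level claim that you have not derived from $T(\pbar)$.

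The paper's proof never argues inside the fibres. It records the two $D$-sort separations as quantifier-free conditions on the image sequence, namely $(q(x),q(f))\notin Z_{e,e}$ for each $f\in U_D$ and $(q(x),q(x))\notin Z_{e,g}$ for $g\notin\Gamma$, observes that these are formulas of $r_d(\vbar)$ (the $Z$'s are defined over the named constants, so they live in the quantifier-free language), and then transfers them to an arbitrary realizer $d'$ using only $Mod^1_{\gbar}$: if $d'=f$ or $d'=gd'$, then $(q(d'),q(f))$, respectively $(q(d'),q(d'))=(q(d'),q(gd'))$, would lie in the corresponding $Z$, contradicting a formula of $r_d$. This also makes your detour through the $SP_g$ axioms and the named special points unnecessary: for $g$ with no fixed point the sentences in item~1 of Definition~\ref{mcax} already give $\forall x\,(gx\neq x)$, and for the elliptic $g$ the $Z_{e,g}$-formulas of $r_d$ do the work. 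So the repair is to replace the $\Gamma$-fibre reasoning by membership and non-membership in the sets $Z_{\gbar}$, which is the device the quantifier-free language actually provides at this stage of the argument.
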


%\sidebar{(av 4.7.23) The right side of the equal sign has no mention of $U$!}

%\begin{claim}\cite[Prop 3.3]{DawHarris}\label{3.3} If $x \in D-U_D$ is
%Hodge generic $\tp_{qf}(x/U)$ is determined by
%\end{claim}
% By Fact~\ref{dichot}.(2) we may restrict to Hodge generics $x$.

%If $x\in \acl(\xbar)$, term $\mu$ with $\mu(\xbar) = x$ and $x' = \mu(\xbar')$.
\begin{proof}
%It is easy to see that  for each $\gbar \in G^n$; such $\tau_G$ formulas as $x \neq gx$ for any non-identity  $g  \in G $ are recognized by saying $(q(x),q(x)) \not \in Z_{e,g}$ if $g \not \in \Gamma$.
%If $d\in D-U_D$ and $\fbar \in \CC$ is an infinite sequence with $r_i = q(j(g_i )$, $d$ realizes $\tp_{qf}(x/U)$  and by the completeness theorem we have the claim.
We show that there is a unique quantifier-free type over $U$  of an element
of $D$  that restricts to $r_d$. The consistent non-trivial types in $\tau_G$
are i) $\{x \neq f: f\in U_D\}$ and ii) $\{x \neq g x\}$ for any non-identity
$g  \in G $. The first is captured by  $(q(x),q(f)) \not\in Z_{e,e}$ for each
$f \in U_D$  and the second by
$(q(x),q(x)) \not \in Z_{e,g}$ if $g \not \in \Gamma$ and these are both in $r(\vbar)$. % but the type is over $U$.

Suppose $\hbar \in S(M)^{\omega}$ (for a saturated $M \models T({\mathbf p})$
containing $U$) realizes $r_d(\vbar)$ and $\hbar$ with $d'\in D(M)$ satisfy
$\hbar = \langle q(g (d')): g \in G\rangle$. By the previous paragraph $d'
\not \in U_D$. So $d'$ realizes $\tp_{qf}(d/U)$ as required.
\end{proof}

%\sidebar{(av 4.7.23) Ambiguity in $d'$ satisfies $\hbar$ (line 2 before end
%of proof)\ldots : jb see above}

%If $d\in D-U_D$ and $\fbar \in \CC$ is an infinite sequence with $r_i = q(j(g_i )$, $d$ realizes $\tp_{qf}(x/U)$  and by the completeness theorem we have the claim.

% as the only common solution of these types is $\tp(d

%It is easy to see that  for each $\gbar \in G^n$; such $\tau_G$ formulas as $x \neq gx$ for any non-identity  $g  \in G $ are recognized by saying $(q(x),q(x)) \not \in Z_{e,g}$ if $g \not \in \Gamma$.
%The complicated part of the type concerns the properties in $S$ of the $q(\gbar(x))$; that is exactly what we reduce to, but for all $\gbar$.  Note however that this says that $1$-types of non-algebraic elements in the domain are controlled by the field types of the $q(\gbar(x))$.

%The complicated part of the type concerns the properties in $S$ of the $q(\gbar(x))$; that is exactly what we reduce to, but for all $\gbar$.  Note however that this says that $1$-types in the domain are controlled by the field types of the $q(\gbar(x))$. But if $p(x) = p(y)$, the field is blind to which $g$ maps $x$ to $y$.

%\begin{lemma}
%
% that the smallest algebraic subvariety  $W_{\gbar}^{\qbar}$ of $S(F)^n$ that is defined over $L_U$ and contains
% $q(\gbar(\xbar))\in S(F)^n$ determines $\tp_{qf}(\gbar(\xbar)) /L_U)$

\begin{notation}\label{ftype}  For a type $r(v)$ over a set $A$ and an isomorphism $f$ from $A$ to $B$, $f(r)$
is the set of $B$-formulas $\phi(v,f(\abar))$ with $\phi(v,\abar)\in r$.
\end{notation}

%Fixing an $f$ taking $U$ to $U'$, we show any $x\in D$ has a image $x' \in
%U'$ preserving $f$.

\begin{claim}\cite[Prop 3.4]{DawHarris}\label{onegbar}  Fix $\gbar$. If $x \in U_D$,
 there is   an $x'\in U_{D'}$ such that %$(q'(g_1 x'), \ldots q'(g_n x')
 $q(\gbar (x'))
  \in S(F')^m$ realizes
$f(\tp_{qf} (q(\gbar (x)) /L_U))$.
%((q(g_1 x), \ldots q(g_n x))
\end{claim}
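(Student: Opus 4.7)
I would read the claim as the pivotal one-step back-and-forth extension for a fixed $\gbar$: the strategy is to realize the $f$-image of the qf-type of $q(\gbar(x))$ in the field sort of $\qbar'$ inside the variety $Z_\gbar(F')$, then pull that realization back to $D'$ via axiom $Mod^2_\gbar$.

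By axiom $Mod^1_\gbar$ (Definition~\ref{mcax}), the tuple $a := q(\gbar(x))$ lies in $Z_\gbar(F)$. By Lemma~\ref{holmo}, $Z_\gbar$ is a variety defined over $E^{ab}(\Sigma)$, every element of which is named by a constant in $\tau$ (Notations~\ref{mcvocab} and~\ref{stmod}); thus ``$v \in Z_\gbar$'' is a quantifier-free $\tau$-formula and belongs to $\tp_{qf}(a/L_U)$. The restriction of $f$ to the field sort is induced by a field embedding $\sigma\colon L_U \hookrightarrow F'$ that fixes $E^{ab}(\Sigma)$ pointwise, so the push-forward $f(\tp_{qf}(a/L_U))$ is a qf-type over $\sigma(L_U) \subseteq L_{U'}$ that still asserts membership in $Z_\gbar$.

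Working inside a sufficiently saturated $\qbar'$ (as in Claim~\ref{3.3}), the push-forward type is realized by some $a' \in Z_\gbar(F')$: this reduces to realizing a qf-type over $\sigma(L_U)$ in the algebraically closed field $F'$, which is immediate from quantifier elimination in $ACF_0$. Applying axiom $Mod^2_\gbar$ in $\qbar'$ to $a'$ yields an $x' \in D'$ with $q'(\gbar(x')) = a'$; enlarging the range of $f$ so that $U'$ absorbs the $G$-orbit of $x'$ (which keeps $U'$ finitely generated over $E^{ab}(\Sigma)$ since $q'$ is constant on each orbit) gives $x' \in U_{D'}$ as required, and by construction $q'(\gbar(x'))$ realizes $f(\tp_{qf}(q(\gbar(x))/L_U))$.

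The main obstacle is ensuring the push-forward type remains inside the \emph{correct} variety $Z_\gbar(F')$: if $\sigma$ failed to fix the field of definition of $Z_\gbar$, the realization would land in a Galois-conjugate variety, and $Mod^2_\gbar$ would give no preimage in $D'$. That is exactly why the Shimura-theoretic control in Lemma~\ref{holmo} (that $Z_\gbar$ is defined over the maximal abelian extension of the reflex field) combined with the inclusion of constants for $E^{ab}(\Sigma)$ in $\tau$ (Notation~\ref{mcvocab}) is indispensable. Everything else—choosing $a'$ and pulling back via $Mod^2_\gbar$—is formal once this Galois-theoretic input is in place.
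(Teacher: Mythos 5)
Your proposal is correct and follows essentially the same route as the paper: $Mod^1_{\gbar}$ places $q(\gbar(x))$ in $Z_{\gbar}$, Lemma~\ref{holmo} plus the named constants for $E^{ab}(\Sigma)$ ensure $\sigma$ fixes $Z_{\gbar}$ setwise so the pushed-forward quantifier-free type still lives on $Z_{\gbar}(F')$, and $Mod^2_{\gbar}$ pulls the field-sort realization back to $D'$. The only cosmetic difference is that the paper packages the field-sort step via the smallest $L_U$-definable variety $W_{\gbar}^{\qbar}\subseteq Z_{\gbar}^{\qbar}$ containing the tuple (which determines the qf-type), while you realize the transported type directly by quantifier elimination in the algebraically closed field together with the saturation available in the ambient argument.
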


\begin{proof} We write $Z_{\gbar}^{\qbar}$ for the points in $S(F)$ satisfying (the formula defining) $Z_{\gbar}$.
  Using Notation~\ref{ftype}, Claim~\ref{3.3} implies  that the smallest algebraic subvariety  $W_{\gbar}^{\qbar}$ of $S(F)^n$ that is defined over $L_U$ and contains
 $q(\gbar(\xbar))\in S(F)^n$ determines $\tp_{qf}(\gbar(\xbar)) /L_U)$. Since $Mod^1_{\gbar}$ is true in $\qbar$, $W_{\gbar}^{\qbar} \subseteq Z_{\gbar}^{\qbar} $.
 But since (by Lemma~\ref{holmo}) $Z_{\gbar}^{\qbar}$ is fixed setwise by $\sigma$ (the map
 described after Definition~\ref{ps}) - being defined over $E^{ab}(\Sigma)$, we have that
 $Z_{\gbar}^{\qbar'}=Z_{\gbar}^{\qbar}$, and therefore
% Then $Z_{\gbar}^{\qbar}$   which, by Lemma~\ref{holmo},
%%( by \cite[Lemma 2.6]{DawHarris} or \cite[p 12]{Etrev})
%is defined over $E^{ab}(\Sigma)$ and so is fixed setwise by the map $\sigma$
%described after Definition~\ref{ps}.
%%Since $W_{\gbar}^{\qbar}$ defined over $L_U$.
 $W_{\gbar}^{\qbar'} \subseteq Z_{\gbar}^{\qbar'} $. Now applying
$Mod^2_{\gbar}$ in $\qbar'$, we find the required $x'$.
\end{proof}

Having proved  Claim~\ref{onegbar}, we can finish the argument.  We need one more crucial piece for the
`forth'. What if $x \in D- U_D$? For this, we need   $\qbar'$ to be $\omega$-saturated (realize all types over finite sets).

\begin{theorem}\label{qe} Suppose  that $\qbar$ and $\qbar'$ are $\omega$-saturated. Then the
    $qf$-system described in Remark~\ref{ps} is
a back and forth; hence, $T(\pbar)$ admits elimination of quantifiers and is complete.
\end{theorem}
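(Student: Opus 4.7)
The plan is to verify the $qf$-back-and-forth property for the system $I$ of Definition~\ref{ps} between the two $\omega$-saturated models $\qbar$ and $\qbar'$ of $T(\pbar)$. Once this is in place, the standard argument---that a qf-back-and-forth between $\omega$-saturated models of a countable theory yields both completeness and elimination of quantifiers for that theory---closes out the theorem; the back step is identical to the forth step by symmetry.

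Fix $f \in I$ with $\dom f = U = U_D \cup U_S$ and let $\sigma\colon L_U \to F'$ denote the induced field embedding, which fixes the named constants from $E^{ab}(\Sigma)$. I would treat three kinds of single-element extensions of $U$. \textbf{(a)} Adjoining $g(x)$ for some already-present $x \in U_D$ and $g \in G$ is forced by $G$-equivariance of $f$. \textbf{(b)} Adjoining a field element $a \in F$ uses that the field sort is algebraically closed and strongly minimal: $\tp_{qf}(a/L_U)$ is either algebraic (given by a single irreducible polynomial over $L_U$) or the unique transcendental 1-type, and in either case $\omega$-saturation of $F'$ produces $a' \in F'$ realizing $\sigma(\tp_{qf}(a/L_U))$. \textbf{(c)} Adjoining $d \in D \setminus G\cdot U_D$: by Fact~\ref{dichot} and Notation~\ref{namesp}, such a $d$ must be Hodge generic, since every special point lies in $\dcl(\emptyset) \subseteq U$.

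Case (c) is the substantive one. By Claim~\ref{3.3}, the quantifier-free type of $d$ over $U$ is determined by the infinite type
\[
r_d(\vbar) \;=\; \bigcup_{\gbar \in G^{<\omega}} \tp_{qf}\bigl(q(\gbar(d))/L_U\bigr),
\]
living in the field sort. Pull $f(r_d)$ back through $q$ to form the partial type $\Sigma(x)$ in a single variable $x$ of sort $D$ over (the finite generating set of) $U'$:
\[
\Sigma(x) \;=\; \bigl\{\, \varphi(q(\gbar(x)),\sigma(\ubar)) \,:\, \varphi(\vbar_\gbar,\ubar) \in r_d \,\bigr\}.
\]
Any finite subset of $\Sigma$ mentions only finitely many $\gbar$'s, which may be concatenated into a single $\gbar^*$; arguing as in Claim~\ref{onegbar} (combine Lemma~\ref{holmo} --- so that $Z_{\gbar^*}^{\qbar'}=Z_{\gbar^*}^{\qbar}$ over $E^{ab}(\Sigma)$ --- with $Mod^2_{\gbar^*}$ applied in $\qbar'$), one produces $x^* \in D'$ realizing this finite fragment. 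Hence $\Sigma(x)$ is finitely satisfiable in $\qbar'$, and by $\omega$-saturation of $\qbar'$ it is realized by some $d' \in D'$. The inequalities $q(x) \neq q(g(u))$ for $u \in U_D$, $g \in G$ belong to $r_d$ and hence to $\Sigma$, so $d' \notin G\cdot U'_D$. Extending $f$ by $g(d)\mapsto g(d')$ for all $g \in G$, together with the induced extension of $\sigma$ to the field generated by $L_U$ and the new coordinates $q(g(d'))$, produces the required enlargement in $I$.

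The main obstacle I anticipate is precisely the assembly step: Claim~\ref{onegbar} as stated gives, for each fixed $\gbar$ and for an \emph{existing} $x \in U_D$, a single witness; to carry out case (c) one must strengthen the internal argument of that claim so it applies to a \emph{new} Hodge generic element and produces the pulled-back finite satisfiability of the \emph{1-variable} type $\Sigma(x)$ uniformly across the concatenation $\gbar^*$ of finitely many $\gbar$'s. The rest---realizing $\Sigma$ via $\omega$-saturation, verifying well-definedness of the enlarged field embedding, and mirroring the whole procedure for the back direction---should go through without additional input.
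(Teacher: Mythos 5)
Your proposal is correct and takes essentially the same route as the paper's proof: it reduces the forth step to a Hodge generic element of the covering sort, invokes Claim~\ref{3.3} to replace its quantifier-free type by the field-sort type $r_d$, verifies finite fragments by the Claim~\ref{onegbar} argument (Lemma~\ref{holmo} plus $Mod^2_{\gbar}$ in $\qbar'$), and realizes the whole type by $\omega$-saturation, with special points handled via their being named. Your direct strong-minimality treatment of a new field element is just a repackaging of the paper's final step (field-sort quantifier elimination plus saturation), so no genuinely different ideas or gaps are involved.
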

\begin{proof} %To find the back and forth, it suffices to show we can extend $f$ by a single point as the `back' is symmetric.
Suppose $f$ is an isomorphism between $U \subseteq \qbar$ and $U' \subseteq \qbar'$. Then $f$ restricts to  a $G$-equivariant injection of $U_D$ into $U_{D'}$ and an embedding of $S(L_U)$ into $S(F')$ induced by an embedding   $\sigma$ of $L_u$ into $S(F')$, that fixes $E^{ab}(\Sigma)$.

 For $x \in \qbar - U$, we must find $x' \in U'$ so that $f \cup (x,x')$ generates
an isomorphism between the structures generated by $U \cup \{x\}$ and
$U' \cup \{x'\}$.
% \sidebar{jb 1/22/23: Another missed case in DawHarris is pointed to  in the next sentence and actually dealt with below.}
%We first discuss the case that $x\in D$ and later with $x \in S$.
If $x\in S$, $x = q(\check x)$ for some $\check x\in D$ so we restrict to that case.
If $x \in U_D$,
 $x'$ exists as $U'_{D}$ is closed under
action by $G$.  Since the coordinates of special points are in
$E^{ab}(\Sigma)$, whose points are all named, for a special point $x$, $x'$ must equal $x$.

The difficult case is when $x \in (D- U_D)$ is Hodge generic. But we noted in
Claim~\ref{3.3} that it suffices to simultaneously realize  all types
$\tp_{qf}((q(g_1 x), \ldots q(g_n x)) /U)$ for all $\gbar$ (of arbitrary
length). A slight variant on the argument for Claim~\ref{onegbar} still holds
if for fixed $x$, we replace a single $\gbar$ by an arbitrary finite set of
$\gbar$. By compactness, the entire type is consistent and so satisfied in
the $\omega$-saturated $\qbar'$.  There is one final step.  By induction we
have to choose $x'$ for a sequence $\xbar,\ybar,x$ where $\xbar \in U_D$ and
$\ybar \in U_S^k$ for some $k$. But what if $x\in U_S$? By Claim~\ref{3.3},
$\tp_{qf}(\xbar,\ybar) $ is determined by $\tp_{qf}(\gbar(\xbar),\ybar) $ (in
the field sort). That we can  choose of $x'\in U'_S$ to satisfy
$f(\tp_{qf}(\gbar(\xbar),\ybar)) $ is now immediate by $\omega$-saturation
and quantifier elimination in the field-sort.

% \sidebar{jb 1/22/23:   The previous paragraph is the missing case from DawHarris of choosing elements from the variety. 1/22/23 9PM  actually covered in dh 3.5}

By Karp's theorem \cite[Theorem 3]{Barwisebf}, the existence of the back and
forth implies all $\omega$-saturated models of $T(\pbar)$ are
$L_{\omega_1,\omega}$ (indeed, $L_{\infty,\omega}$) elementarily equivalent.
Every model has an $\omega$-saturated elementary extension, so $T(\pbar)$
is complete.
\end{proof}

\subsection{Galois Representations and finite index conditions}\label{section:fic}

In this section we begin by considering the action of discrete and Galois
groups on the domain and field sorts. Then we unite these approaches by
defining a Galois representation. We  then state the key to establishing
categoricity, a consequence of Serre's open mapping theorem.

\subsubsection{Two views: domain and field sort}\label{domfld}
We explore the following diagram which links the domain sort (via the
quotient) with the field sort.

\[ \begin{tikzcd}
    \mathbb{H}_{\overline{h}}\approx \Gamma_{\overline{h}}\setminus \mathbb{H}
    \arrow{r}{[\phi_{\overline{h}}]} \arrow[swap]{d}{{\rm id}_{\mathbb{H}_{\overline{g}}}} &
    Z_{\overline{h}} \arrow{d}{\psi_{\overline{h},\overline{g}}} \\%
    \mathbb{H}_{\overline{g}}\approx \Gamma_{\overline{g}}\setminus \mathbb{H}
    \arrow{r}{[\phi_{\overline{g}}]}& Z_{\overline{g}}
\end{tikzcd}
\]

\begin{convention} \label{conv1} $\gbar = \langle e, g_1 \ldots g_{n-1}\rangle$ has length $n$.
%Each sequence considered begins with the identity element $e$.
We restrict
to $\gbar$ with $\Gamma_{\gbar}\unlhd \Gamma$ (normal subgroup). Recall
$Z_{\gbar} \subseteq S(\CC)^{\lg(\gbar)}$.
\end{convention}

%Fix $\Gamma'$ as $\Gamma_{e,g}$ and
  We have two views of `essentially' the same map. The first moves to a quotient
   on the domain side which is not $\tau$-definable;
   the second
  `names' the range of the first in the target side.
  We begin with {\em quotient data} but with
manifestations in both the domain and target.

\medskip
\noindent
\textbf{Domain/Quotient data:} The first view motivates $id$ for identity.

\begin{definition}\label{idmap}
%\item
Let $\gbar \subseteq \hbar$.
Define $\id_{\hbar\gbar}: \HH_{\hbar} \rightarrow \HH_{\gbar}$
 by
 $[x]_{\Gamma_\hbar}\mapsto [x]_{\Gamma_\gbar}$.
\end{definition}

 The normality hypothesis implies that $\Gamma_\gbar/\Gamma_{\hbar}$ acts
on $\HH_{\gbar}$: for $\lambda\in \Gamma_\gbar$,   $\lambda[
x]_{\Gamma_\gbar} := [\lambda x]_{\Gamma_\gbar}$, so the representatives
$\lambda_i$ of the cosets of $\Gamma_{\gbar}/\Gamma_{\hbar}$ index the
equivalence classes; thus the action is transitive.
% \sidebar{Ronnie follows Sebastian by identifying $\HH_e$ with $S_e(\CC)$ and dropping the $e$. I am keeping them separate.}
%
%
%
%Then
%$$[id]^{-1}([x]_\Gamma)  = \{[x_1]_{\Gamma'}, \ldots [x_m]_{\Gamma'}\}.
%$$
%where the $\lambda_i$ are representatives of the cosets of $\Gamma/\Gamma'$.

%\item $id: \HH_{\gbar}\rightarrow S_e(\CC)$ by $[x]_{e\gbar} \rightarrow S_e(\CC)$ by $[x]_{\Gamma'}\mapsto j_{\Gamma'}(x)$.
%
% Alternatively, $z = j_\Gamma(x)$  and
%
%$$[id]^{-1}(j_\Gamma(x)) = \{j_{\Gamma'}(x_1), \ldots j_{\Gamma'}(x_m)\}.$$
%
%%where the $x_i$ are ????
%
%where in either case $x_i =\gamma_i x$ and the $\gamma_i$ are representatives of the cosets of $\Gamma/\Gamma'$.
%
%\item \sidebar{Beginning of  Ronnie's lecture}
%\item Define
%$[id]_{\gbar}: \HH_\gbar \rightarrow S_e(\CC)$ by mapping  $[x]_{\gbar} \rightarrow j(x)=z$.

%\sidebar{In my notes I have
%$$id_g^{-1}(z) = \{[x]_g, [g_1 x] \ldots [g_{smear} x]_g.$$
%
%I think that for two days I have been completely misreading this. I believe the statement should be:}

%$$id_\gbar^{-1}(z) = \{[x]_\gbar, [\gamma_1 x]_\gbar \ldots [\gamma_k x]_\gbar\}.$$
%
%where .
%
%%Thought of on target side $id_\gbar$ is the projection of $
%
%\end{enumerate}
%\end{definition}

 % on the (set of) equivalence classes.

%Are the $Z_{\gbar}$ defined over $E^{ab}(\Sigma)$

\medskip
\noindent
\textbf{Field data:} We define the right hand column of the diagram.

%Lemma~\ref{varside} transfers information from the domain to the field side
%and provides an action by the Galois group on the domain side.
%The crucial point \cite[p 12]{Etrev} is that is the entire system giving $\hat \HH$ is definable
%over $E^{ab}$ and so over any extension $L$.  %So the action of $\aut(\CC/L)$
%on $Z_\gbar$ induces the action $\Gamma_{\gbar}$ on $ H_{\gbar} =\Gamma_{\gbar}\setminus \HH$ by for $x\in H_{\gbar}$ and $\sigma \in \aut(\CC/L)$,
%$$\rho_{\gbar}(\sigma)\{x\} = \psi_{\gbar}^{-1}(\sigma(\psi_{\gbar}(x))).$$
%
%$$\rho = \underleftarrow{lim}\rho_{\gbar}$$
%
%\sidebar{Low confidence in these definitions. See \cite[Lemma 3.21]{Etrev}}

\begin{definition}\label{fieldmaps}
\begin{enumerate}
\item For $\gbar \subseteq \hbar$,  $\lg(\gbar) = n$, $\lg(\hbar) = m$,
    $\psi_{\hbar,\gbar}$, denotes the restriction of the natural projection
    from $S(\CC)^m$ onto $S(\CC)^m$ to  a map from  $Z_{\hbar} \subseteq S(\CC)^m$
    onto $Z_{\gbar} \subseteq S(\CC)^n$.
\item Choose $z \in Z_\gbar$ and let $L= L_z$ be a finitely generated
    extension of the defining field for $S$ such that $z$ is defined over
    $L$. Write $\overline{L}$ for $\acl(L)$.

    \item  Now, $\aut(\CC/L)$ acts on the fiber of $\psi_{\hbar,\gbar}$
        over $z$, by
its action on the coordinates of $z$;  as it would for any definable
finite-to-one map from $Z^m_{\hbar} \rightarrow Z^n_{\gbar}$.

    \end{enumerate}

    \end{definition}

To connect the two sides, conjugating by $[\phi_\hbar]$,
$\aut(\overline{L}/L)$ acts on $\id_{\hbar \gbar}^{-1}(z)$.
%This yields a
%homomorphism (Galois representation) $\rho^z_{\gbar,\hbar}$ from
%$\aut(\overline{L}/L)$ into $\Gamma_\gbar/\Gamma_\hbar$ with the following
%strong properties.

\begin{lemma}\cite[\S 3.5 p. 18]{Etrev} \label{varside}
$\aut(\CC/L)$ acts on the fiber of $\psi_{\hbar,\gbar}$ over $z$,
 (and so via $[\phi_\hbar]$ on  $\id_{\hbar \gbar}^{-1}(z)$).
 This action commutes with the action of the free and transitive (simply transitive)
  action of $\Gamma_\gbar/\Gamma_\hbar$ on the fibers of $\id_{\hbar,\gbar}$.
Thus we have a homomorphism (Galois representation) $\rho^z_{\gbar,\hbar}$
from $\aut(\overline{L}/L)$ into $\Gamma_\gbar/\Gamma_\hbar$.

\end{lemma}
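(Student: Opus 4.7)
My plan is to unpack the assertion into three stages: (i) verify that $\aut(\CC/L)$ preserves the fiber of $\psi_{\hbar,\gbar}$ over $z$; (ii) make precise the simply transitive action of $\Gamma_\gbar/\Gamma_\hbar$ on this fiber, transported along $[\phi_\hbar]$; and (iii) use commutation of these two actions, together with the torsor formalism, to manufacture the homomorphism $\rho^z_{\gbar,\hbar}$.

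For stage (i), Lemma~\ref{holmo} guarantees that $Z_\hbar$ and $Z_\gbar$ are defined over the maximal abelian extension of the reflex field~$E$; enlarging $L$ (if necessary) to contain these coefficients, the coordinate projection $\psi_{\hbar,\gbar}$ becomes an $L$-morphism. Since $z$ is an $L$-point, for every $\sigma\in\aut(\CC/L)$ and every $w$ in the fiber one has $\psi_{\hbar,\gbar}(\sigma w)=\sigma(\psi_{\hbar,\gbar}(w))=\sigma(z)=z$, so $\sigma$ permutes the finite fiber. Finiteness of the fiber forces this action to factor through $\aut(\overline{L}/L)$, since all fiber-coordinates lie in $\overline{L}$. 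For stage (ii), Convention~\ref{conv1} applied to both $\gbar$ and $\hbar$ yields $\Gamma_\hbar\trianglelefteq\Gamma$ and hence $\Gamma_\hbar\trianglelefteq\Gamma_\gbar$, which makes the rule $\lambda\cdot[x]_{\Gamma_\hbar}:=[\lambda x]_{\Gamma_\hbar}$ well-defined. An elementary calculation then shows that $\Gamma_\gbar/\Gamma_\hbar$ acts freely and transitively on each fiber of $\id_{\hbar,\gbar}$, since any two preimages of $[x]_{\Gamma_\gbar}$ differ by an element of $\Gamma_\gbar$ which is unique modulo $\Gamma_\hbar$. Because $[\phi_\hbar]$ is a bijection on Hodge generic points (Lemma~\ref{holmo}) and the displayed square at the start of Section~\ref{section:fic} commutes, this action transfers to a simply transitive $\Gamma_\gbar/\Gamma_\hbar$-action on $\psi_{\hbar,\gbar}^{-1}(z)$.

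What I anticipate as the main obstacle is the commutation claim underlying stage (iii). The key fact to establish is that the transported $\Gamma_\gbar/\Gamma_\hbar$-action is realized on $Z_\hbar$ by $L$-rational automorphisms: analytically, $\psi_{\hbar,\gbar}$ corresponds to the covering $\HH_\hbar\to\HH_\gbar$, which is Galois with deck group $\Gamma_\gbar/\Gamma_\hbar$; since the algebraic cover $\psi_{\hbar,\gbar}$ is itself defined over $L$, standard descent for Galois covers places its deck group inside $\aut_L(Z_\hbar)$ (one may need to excise the branch locus coming from special points, which is harmless provided $z$ is Hodge generic, the only case we must treat by Fact~\ref{dichot}). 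Once both actions are realized by $L$-defined maps on $Z_\hbar$, they commute as set maps on the finite fiber. With commutation in hand, fix any base point $w_0\in\psi_{\hbar,\gbar}^{-1}(z)$; simple transitivity defines $\rho^z_{\gbar,\hbar}(\sigma)\in\Gamma_\gbar/\Gamma_\hbar$ by $\sigma(w_0)=\rho^z_{\gbar,\hbar}(\sigma)\cdot w_0$, and the computation
\[
(\sigma\tau)(w_0)=\sigma\bigl(\rho^z_{\gbar,\hbar}(\tau)\,w_0\bigr)=\rho^z_{\gbar,\hbar}(\tau)\,\sigma(w_0)=\rho^z_{\gbar,\hbar}(\tau)\,\rho^z_{\gbar,\hbar}(\sigma)\,w_0
\]
yields a group (anti-)homomorphism, with the correct handedness settled by the convention used for composing Galois automorphisms with left versus right actions.
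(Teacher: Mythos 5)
Your stages (i) and (ii) reconstruct the routine part of the statement correctly, and it is worth noting that the paper itself gives no proof here: the lemma is quoted from \cite[\S 3.5]{Etrev}, where it rests on Shimura's theory of canonical models (the same input flagged for Lemma~\ref{holmo}). The fiber over the $L$-point $z$ of the $L$-definable finite-to-one map $\psi_{\hbar,\gbar}$ consists of points algebraic over $L$, so the $\aut(\CC/L)$-action permutes it and factors through $\aut(\overline L/L)$; and normality ($\Gamma_\hbar\unlhd\Gamma$, hence $\Gamma_\hbar\unlhd\Gamma_\gbar$, Convention~\ref{conv1}) gives the $\Gamma_\gbar/\Gamma_\hbar$-action on fibers of $\id_{\hbar,\gbar}$. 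One small slip: freeness of that action already requires the fiber to lie over a Hodge generic point (trivial stabilizer, Fact~\ref{dichot}); you invoke Hodge genericity only later, for the branch locus.

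The genuine gap is in stage (iii), at the sentence ``standard descent for Galois covers places its deck group inside $\aut_L(Z_{\hbar})$.'' That is not a valid general principle: a finite Galois cover of $L$-varieties need not have its deck transformations defined over $L$. The standard counterexample is the $n$-th power cover of $\mathbb{G}_m$ over $\QQ$: the deck group is $\mu_n$ acting by multiplication, and for $\sigma\in\aut(\CC/\QQ)$ one has $\sigma\circ\delta_\zeta=\delta_{\sigma(\zeta)}\circ\sigma$, so the Galois and deck actions commute only after the roots of unity are adjoined to the base field. Exactly this issue is the crux of the present lemma: the fact that the $\Gamma_\gbar/\Gamma_\hbar$-action on $Z_{\hbar}$ (equivalently, the covering $\psi_{\hbar,\gbar}$ together with its deck transformations) is defined over the maximal abelian extension $E^{ab}$ --- and hence over the fields $L$ actually used, which contain $E^{ab}(\Sigma)$ (Notation~\ref{stmod}, Definition~\ref{fic1}) --- is a nontrivial arithmetic input from the theory of canonical models of modular and Shimura curves; it is precisely what the citation to \cite{Etrev} carries, and it is why $E^{ab}(\Sigma)$ is built into the vocabulary in the first place. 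Once that rationality is granted, your torsor argument (fix $w_0$, read off $\rho^z_{\gbar,\hbar}(\sigma)$ from $\sigma(w_0)=\rho^z_{\gbar,\hbar}(\sigma)\cdot w_0$, and sort out the left/right convention so the anti-homomorphism becomes a homomorphism) is fine; without it, the commutation claim --- and with it the existence of $\rho^z_{\gbar,\hbar}$ --- is unproven.
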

%
%\sidebar{(av 4.7.23) We have, as far as I see, not given the definition of $\pi_{\hbar,\gbar}$ }

%\sidebar{note dim 1 above and m $x_i$ in \ref{fic1}}

\subsubsection{Galois Representation}\label{galrepsec}

While the notion of a  representation  of a group $A$ frequently refers to
linear representations,  a homomorphism of $A$ into a matrix group $B$, here
we will discuss specific examples of a more general notion: {\em a
representation of $A$ is a homomorphism of $A$ into a group $B$. This is a
{\em Galois representation} if $A$ is the Galois group of one field over
another.}  In Section~\ref{domfld}, we gave Galois representations of
$\aut(\overline{L}/L)$ into $\Gamma_\gbar/\Gamma_\hbar$. In order to
understand how to combine the actions of the $\Gamma_\gbar/\Gamma_\hbar$ as
$\gbar, \hbar$ vary, we need the notion of inverse limit.

%The action of the quotients   is unified by forming an

\begin{definition}[Inverse Limit]\label{thelimit} Given a directed set $(I,\leq)$ an
{\em inverse system} on $I$ is a family of structures $\langle A_i:i\in
I\rangle$, and for $i<j$, maps $f_{ij}$ from $A_j$ to $A_i$ such that
%$\langle A_i, f_{ij}\rangle$ with $f_{ij}$
%mapping $A_j$ to $A_i$ if
$i<j<k$ implies   $f_{ij} \circ f_{jk} =  f_{ik} $.

An {\em inverse limit} of this inverse system is an object $\hat A= \underleftarrow{lim} A_i$ and a
family of morphisms
$g_i: \hat A \rightarrow A_i$ such that (1) for all $i<j$ in $I$, $f_{ij} \circ g_j = g_i$ and (2)
given any $A'$ and family ${g'}_i$ satisfying (1) there is a unique morphism $h:\hat A\rightarrow
A'$ such that for all $i\in I, {g'}_i=g_i\circ h$.

\end{definition}

 \begin{definition} {\bf  Galois Representations of Inverse Limits} \label{galrep}
 We work with a  modular curve $S(\CC) = \Gamma \setminus  \HH$
 which is defined over $E^{ab}(\Sigma)$.
  (Notation~\ref{stmod}).
%Suppose $S(\CC)= \Gamma \setminus X^+$ and the $\Gamma_{\gbar}$  are congruence
%subgroups of $\Gamma$ as defined in Notation~\ref{zgbar}.
Since each  $\Gamma\gbar \subseteq \Gamma$,
$\rho^z_{\gbar,\hbar}:\aut(\overline{L}/L) \rightarrow \Gamma$ and by taking
an
  inverse limit of the representations  $\rho^z_{\gbar,\hbar}$, %for fixed  $\gbar$ and $\hbar \supseteq \gbar$,
we obtain:

%We study the    homomorphism ({\em Galois representation}):

 $$\rho^z \colon
{\rm Gal}(\overline L/L) \rightarrow \overline \Gamma$$ where $\overline
{\Gamma} =   \underleftarrow{\lim}_\hbar\ \Gamma /  \Gamma_{\hbar}$. The
$\hbar$ range over all finite sequences as Convention~\ref{conv1}.  See
Definition~\ref{thelimit} and \cite[\S 3.6 p 17]{Etrev}.
\end{definition}
% Such a representation $\rho_z$ is constructed with respect to a particular element $z \in S(\CC)$ and $L$ is a finite extension of $E(ab)$ containing
% the coordinates of $z$.
% \sidebar{ (av 4.7.23) This definition of Galois representation seems to have many things WRONG!
% Check handwritten notes. For example, $n$ cannot be both the length of $\gbar$ and  $\hbar$. Also,
% the quotients seem upside down \ldots}

% The limit is denoted $\hat \Gamma$ in \cite[\S 2.5]{DawHarris}. misread 1/18/23 jb.\end{definition}

 % of the
%image of $\rho$ in $\overline{\Gamma}$ is a measure of how much `smaller'
%the

For any groups $H_1\leq H_2$ that act on a set $X$ the $H_1$-orbits of $X$
partition the $H_2$-orbits.  So if $[H_2:H_1]$ is finite and $H_2$ is
infinite, the obits will have the same cardinality and the smaller
$[H_2:H_1]$ is, the closer we are to an isomorphism.

%
%We need some more detail on how such a representation arises and its consequences.
%From the biholomorphism between $\Gamma_{\gbar}\setminus \HH$ and $Z_{\gbar}$
%(Lemma~\ref{mmaps}) we can deduce:
%$$\underleftarrow{\lim}\Gamma_{\gbar}\setminus \HH \leftrightarrow \underleftarrow{\lim}Z_{\gbar}.$$
%
%\sidebar{3/10/23 4:31: Two points here: is this false if we haven't named the special points.  If we don't by same means name the special points do we have to rephrase this by taking the limit only over the Hodge generics, whatever that means?
%
%I suppose an inverse limit of bihomomorphism is a biholomorphism; less clear if it only on part of the object.}

%\sidebar{jb 1/21/23:  ANDRES I am working from diagram 3.4 and text below on p 17 of Eterovic.  I am guessing that `on L' means with parameters from $L$.
%
%If you agree change on to over in last line of text.
%
%Also note that he designated the smaller field as $E^{ab}(S)$. We should uniformize this if possible. But can't be done without a printout of our paper.
%
%It may or may not be worth mentioning $\rho$ depended on the choice of $z$
%}

%\sidebar{1/29/23: I don't understand last sentence. How does  $\aut(\CC/L)$ act on the elements of $\overleftarrow Z_{\gbar}$ in proof of Lemma~\ref{4.6}?  It makes more sense here.}

%\begin{definition} A first order theory is small if for every $n$, there are
%only finitely many $n$-types over $\emptyset$.
%\end{definition}
%
%{\bf A key issue in this study is: Under what conditions is $T(\pbar)$ or (perhaps $T^{\infty}_{SF}(\pbar)$ (Notation~\ref{infax}) small)?}

Now, we can state the first of two crucial sufficient conditions for
categoricity.

\begin{definition} {\bf First Finite Index Condition (FIC1)} \label{fic1}
The {\em first finite index condition} is satisfied by a modular curve
$p\colon \HH \rightarrow S(\CC)$ if:

For any non-special points $x_1, \ldots x_m \in \HH$ in distinct $G$-orbits
(Definitions~\ref{idmap},~\ref{fieldmaps}) and for any field $L$ containing the field over
$E^{ab}(\Sigma)$ along with the coordinates of the $p(x_i)$,  the image of the
induced homomorphism $\rho: {\rm Gal}(\overline L/L) \rightarrow
\overline\Gamma^m$ has finite index in $ \overline \Gamma^m$.
\end{definition}

Recall from Lemma~\ref{3.3} that $$r_d(\vbar) \models \tp_{qf}(d/U).$$ where
$r_d(\vbar) =\bigcup_{\gbar \in G} \tp_{qf}(q(\gbar(d)) /U) =
\tp_{qf}(\langle q(g  d): g\in G\rangle/U)$.
The argument for Lemma~\ref{3.3} began with the observation that $r_d(\vbar)$
implied, in particular, that $d \not \in D_U$, so $d$ is an independent Hodge
generic.   We will deduce from Lemma~\ref{onto} that (under FIC1) only finitely many
tuples $\gbar$ from $r_d$ are really needed.

%Equation~\ref{principal} says we need only finitely many of the $\theta_{x,\kbar}$. We will say $\hat \gbar$ fixes (the   type of) $x$.}

%\end{remark}

%\sidebar{need to supply argument}
%
%
%
%
%\cite[41-42]{Etrev} %Let $\overline \Gamma
%=
%.

%\sidebar{6/25/23: is $\hat \gbar$ uniform in $z$; does it need to be?}

\begin{lemma} \label{onto} Assume FIC1.
%Suppose (fic1) the image $A$ of the galois group
%$\aut(\overline{L}/L_e)$ under $\rho_z$
% %is  $A$   and
% has finite index $k$ in $\overline{\Gamma}=
%\underleftarrow{lim}\Gamma /\Gamma_\gbar$.
Then, for each $z$, for some $\hat \gbar$,
% $\overline{\Gamma}_\gbar =\underleftarrow{lim} \ \Gamma_{\hbar}/\Gamma_\gbar$.
%
% the image $I=I_z$ of
 the map $$\rho_z: \aut(\overline{L}/L_{\hat \gbar}) \mapsto
  \overline{\Gamma}_{\hat \gbar}^{m}
  = \underleftarrow{lim}_{\hbar \supseteq {\hat \gbar}}({\Gamma_{\hat \gbar}/\Gamma_\hbar})^m$$ is surjective.
  \end{lemma}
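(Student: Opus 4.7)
The plan is to translate the finite-index content of FIC1 into a surjectivity statement by passing from $L$ to a suitable finite extension $L_{\hat\gbar}$. Set $H := \rho^z(\aut(\overline L/L))$; by FIC1, $H$ has finite index in $\overline\Gamma^m$, and since $\aut(\overline L/L)$ is profinite (hence compact) and $\rho^z$ is continuous, $H$ is compact, and therefore closed in the profinite topology on $\overline\Gamma^m$. The subgroups $\overline\Gamma_\hbar^m$ (as $\hbar$ ranges over the finite sequences of Convention~\ref{conv1}) form a fundamental system of open neighborhoods of the identity in $\overline\Gamma^m$; since $H$ is closed of finite index, it is open, so I can choose some $\hat\gbar$ with $\overline\Gamma_{\hat\gbar}^m \subseteq H$.

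The heart of the argument is then the Galois-theoretic identification
$$ \aut(\overline L/L_{\hat\gbar}) \;=\; (\rho^z)^{-1}(\overline\Gamma_{\hat\gbar}^m) \cap \aut(\overline L/L). $$
By construction, $L_{\hat\gbar}$ is obtained from $L$ by adjoining the coordinates of the finite fiber of $\psi_{\hat\gbar,\gbar}$ over $z$, and by Lemma~\ref{varside}, $(\Gamma_\gbar/\Gamma_{\hat\gbar})^m$ acts freely and transitively on that fiber and commutes with the Galois action. Consequently, $\sigma \in \aut(\overline L/L)$ fixes $L_{\hat\gbar}$ pointwise if and only if it fixes the fiber pointwise, if and only if its image $\rho^z(\sigma)$ projects trivially to $(\Gamma/\Gamma_{\hat\gbar})^m$, i.e.\ lies in $\overline\Gamma_{\hat\gbar}^m$.

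Putting these together, for any $\gamma \in \overline\Gamma_{\hat\gbar}^m \subseteq H$ we lift to some $\sigma \in \aut(\overline L/L)$ with $\rho^z(\sigma) = \gamma$, and the identification places $\sigma$ automatically in $\aut(\overline L/L_{\hat\gbar})$; conversely the restricted image lies in $\overline\Gamma_{\hat\gbar}^m$. Hence $\rho^z$ sends $\aut(\overline L/L_{\hat\gbar})$ onto $\overline\Gamma_{\hat\gbar}^m$, as required. The main obstacle I anticipate lies in the Galois identification above: one must verify carefully that the simply transitive actions on each finite fiber at every level $\hbar \supseteq \hat\gbar$ dovetail with the inverse-limit description of $\overline\Gamma$, so that `fixing $L_{\hat\gbar}$' matches cleanly with `projecting trivially at level $\hat\gbar$', uniformly over the tower of finite-index subgroups and without extraneous twists arising from the action of $G$ or the choice of basepoint.
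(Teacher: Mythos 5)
Your proof is correct, and it arrives at the same pivotal point as the paper --- locating a level $\hat \gbar$ at which the whole congruence subgroup $\overline{\Gamma}_{\hat \gbar}^{m}$ is contained in the image of the Galois representation --- but by a somewhat different route. The paper argues combinatorially at finite levels: FIC1 bounds the index of $I=\im(\rho_z)$ in $\overline \Gamma^m$ by a finite $k$, one chooses $\hat\gbar$ at which that index is already fully witnessed, and then surjectivity onto each finite quotient $\Gamma_{\hat\gbar}/\Gamma_{\hbar}$ for $\hbar \supseteq \hat\gbar$ follows because any missed element would produce a new coset of $I$ beyond the $k$ already present. You instead argue topologically: the image is the continuous image of a profinite Galois group, hence compact and closed, and a closed subgroup of finite index is open and therefore contains one of the basic open subgroups $\overline{\Gamma}_{\hat\gbar}^{m}$. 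Your version has the merit of making explicit two steps that the paper's terse proof leaves implicit: first, passing from surjectivity onto each finite quotient to surjectivity onto the inverse limit $\underleftarrow{\lim}_{\hbar \supseteq \hat\gbar}(\Gamma_{\hat\gbar}/\Gamma_{\hbar})^m$ needs exactly the closedness of the image that you establish; second, the identification $\aut(\overline{L}/L_{\hat\gbar})=(\rho^z)^{-1}(\overline{\Gamma}_{\hat\gbar}^{m})$, which you justify through the commuting simply transitive action of Lemma~\ref{varside}, is what licenses restricting the domain of $\rho_z$ as in the statement (here you correctly read $L_{\hat\gbar}$ as $L$ extended by the coordinates of the level-$\hat\gbar$ fiber over $z$, which the paper never spells out). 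The trade-off: the paper's coset counting is more elementary and never mentions the topology, while your argument isolates cleanly where compactness and the Galois-theoretic bookkeeping actually enter; the ``dovetailing'' worry you flag at the end is precisely the content of Lemma~\ref{varside} and Definition~\ref{galrep}, so no extra verification beyond what you cite is needed.
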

  %\underleftarrow{lim}\Gamma_{\hbar$\aut(\overline{L}/L_e)$}/\Gamma_\gbar$

 %(Perhaps this is $\Gamma_\hbar /\Gamma\gbar$)
\begin{proof} Let $I = \im (\rho_z)$ and let $k= [\overline{\Gamma}:I]$.  Suppose not. Choose $\hat{\gbar}$ with  $\gbar \subseteq
    {\hat{\gbar}}$ such that
$[\Gamma_{ \gbar} : \Gamma_{\hat\gbar}]=k$. Thus, for any $\hbar \supseteq
{\hat \gbar}$,  $\rho_z$ must be onto    $\Gamma_{\hat \gbar} /\Gamma_{
\hbar}$. For, if not, there is an $\eta \in \Gamma_{\hat \gbar} /\Gamma_{
\hbar}$ and that is not in $I$; it must be in a new coset of $I $ in
$\overline{\Gamma}$,
%\underleftarrow{lim}\Gamma /\Gamma_\gbar$
 contrary to the choice of $\hat {\gbar}$.
\end{proof}

\begin{corollary}\label{getprin} (Under FIC1) For $d \in D-U$,
$$\tp_{qf}(q(\hat \gbar(d))/U) \models r_d(\vbar) \models \tp_{qf}(  d/U).$$
\end{corollary}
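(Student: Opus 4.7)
My plan is to dispose of the second entailment immediately and then reduce the first to a Galois-orbit argument controlled by FIC1. The second entailment, $r_d(\vbar) \models \tp_{qf}(d/U)$, is exactly Claim~\ref{3.3}. For the first, I will show that $\tp_{qf}(q(\hat\gbar(d))/U)$ already determines the qf-type of the full sequence $\langle q(g(d)) : g\in G\rangle$ over $U$, i.e., entails $r_d(\vbar) = \bigcup_{\gbar \in G}\tp_{qf}(q(\gbar(d))/U)$.

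Since the qf-type of any subtuple is a logical consequence of the qf-type of an enclosing tuple, it suffices to fix an arbitrary finite $\hbar \supseteq \hat\gbar$ (with $\Gamma_\hbar \unlhd \Gamma$, as in Convention~\ref{conv1}) and argue that $\tp_{qf}(q(\hbar(d))/U)$ follows from $\tp_{qf}(q(\hat\gbar(d))/U)$. Put $z := q(\hat\gbar(d)) \in Z_{\hat\gbar}$ and $w := q(\hbar(d)) \in Z_\hbar$, so that $\psi_{\hbar,\hat\gbar}(w) = z$. Let $L$ be the finitely generated field containing (the parameters of) $U$ and the coordinates of $z$. Then $w$ lies in the finite fiber $\psi_{\hbar,\hat\gbar}^{-1}(z)$, so $w$ is algebraic over $L$ and its qf-type over $L$ is determined by its $\aut(\overline L/L)$-orbit in that fiber.

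For transitivity of the Galois action on the fiber, I combine two ingredients. Lemma~\ref{varside} tells us the $\aut(\overline L/L)$-action factors through a homomorphism into $\Gamma_{\hat\gbar}/\Gamma_\hbar$, which itself acts simply transitively on the fiber. Lemma~\ref{onto} (where FIC1 is used to choose $\hat\gbar$) gives that the inverse-limit representation $\rho_z$ is surjective onto $\overline\Gamma_{\hat\gbar}$; composing with the canonical projection $\overline\Gamma_{\hat\gbar} \twoheadrightarrow \Gamma_{\hat\gbar}/\Gamma_\hbar$ yields surjectivity at the $\hbar$-level and hence transitivity of the Galois action on $\psi_{\hbar,\hat\gbar}^{-1}(z)$. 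Every element of this fiber therefore has the same qf-type over $L$ (and over $U$) as $w$; together with the $0$-definable constraint $\psi_{\hbar,\hat\gbar}(w) = z$ (definable over the base by Lemma~\ref{holmo}), this pins down $\tp_{qf}(w/U)$ from $\tp_{qf}(z/U)$, which is what we needed.

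The substantive difficulty is packed into Lemma~\ref{onto}, which converts FIC1's finite-index statement into honest surjectivity by enlarging $\hat\gbar$; once that is in hand, the present deduction is routine Galois bookkeeping. The only minor point to verify is that moving the base field from $L_{\hat\gbar}$ (the natural base of $\rho_z$) up to $L$ (which also absorbs parameters of $U$) preserves the surjectivity on each finite quotient $\Gamma_{\hat\gbar}/\Gamma_\hbar$; this is immediate since $L/L_{\hat\gbar}$ is finitely generated and the action on the finite fiber factors through a finite-index subgroup of $\aut(\overline L/L_{\hat\gbar})$, which still hits every coset by the choice of $\hat\gbar$ in Lemma~\ref{onto}.
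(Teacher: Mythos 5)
Your proposal is correct and follows essentially the same route as the paper: the second entailment is quoted from Claim~\ref{3.3}, and the first is obtained, for each $\hbar\supseteq\hat\gbar$, by noting that $\tp_{qf}(q(\hbar(d))/U)$ is determined by its $\aut(\CC/L_U)$-orbit inside the finite fiber of $\psi_{\hbar,\hat\gbar}$ over $z=q(\hat\gbar(d))$, and that this orbit is the whole fiber because the representation surjects onto $\Gamma_{\hat\gbar}/\Gamma_{\hbar}$ (Lemma~\ref{onto}, using FIC1) while $\Gamma_{\hat\gbar}/\Gamma_{\hbar}$ acts simply transitively. One caution: your closing base-change remark (that a finite-index subgroup of $\aut(\overline L/L_{\hat\gbar})$ ``still hits every coset'') is not valid as a general group-theoretic step; the clean fix, and the paper's implicit reading, is that FIC1 holds for any finitely generated field containing the relevant coordinates, so one chooses $\hat\gbar$ via Lemma~\ref{onto} with base field $L_U$ (absorbing the parameters of $U$) from the outset.
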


\begin{proof} The second implication is Lemma~\ref{3.3}. For the first, choose
any $\hbar \supseteq \hat \gbar(d)$ and let $m= \lg(\hat \gbar)$,
 $r = \lg(\hbar)$. Let $\Fscr \subseteq Z^{r}_\hbar$ be the fiber over $\hat
\gbar(d')\in Z^{m}_{\hat \gbar}$ of the finite-to-one map $\psi_{\hbar \hat
\gbar }:Z^{r}_\hbar\rightarrow Z^{m }_{\hat \gbar}$. Similarly,
$\tp_{qf}(\hbar(d)/L_U )$ is determined by the $\aut(\CC/L_U)$-orbit $\Gscr
\subseteq \Fscr$ containing $\hbar(d)$.
%Let $\Fscr \subseteq Z^{r}_\hbar$ be the fiber over $(\hat \gbar(\xbar), \hat
%\gbar(x')) \in Z^{m}_{\hat \gbar}$ of  the finite to one map $\psi_{\hbar
%\hat \gbar }:Z^{r}_\hbar\rightarrow Z^{m }_{\hat \gbar}$.
%\sidebar{(av
%4.7.23) Where are the maps $\pi_{\hbar \hat{\gbar}}$ defined?}
Then, $\tp_{qf}(\hbar(\xbar)/L_U )$ is determined by the
$\aut(\CC/L_U)$-orbit $\Gscr \subseteq \Fscr$ containing $\hbar(\xbar)$.
%\sidebar{jb 7/5/23:  $\hbar(\xbar)$ questionable}
But $\Gscr =\Fscr$, since
$\rho_z$ induces a homomorphism from $\aut(\CC/L_U)$ {\em onto} $\Gamma_{\hat
\gbar} /\Gamma_{\hbar}$ and $\Gamma_{\hat\gbar} /\Gamma_{\hbar}$ acts
transitively on the fiber. Since this holds for any such $\hbar$, we finish.
\end{proof}

We turn now to the infinitary axioms that are needed to obtain categoricity.

\begin{notation} [Infinitary Axioms]\label{infax}
\begin{enumerate}
\item $\Phi_\infty$ is the $L_{\omega_1,\omega}$ sentence asserting that
    for $(D, S, q)$ both the  dimension of  the field bi-interpretable with
    $S$ and of the strongly minimal  structure $\langle D, \{f_g: g \in
    \Gamma \}\rangle$ are infinite.

\item $SF$ (standard fibers) denotes the $L_{\omega_1,\omega}$-axiom:
$$(\forall x \forall y \in D)(q(x)= q(y) \rightarrow \bigvee_{g \in \Gamma}
x =f_g(y)).$$

\item $T^\infty (\pbar)$ denotes $\Th(\pbar) \cup \{\Phi_{\infty} \}$ and

\item $T^\infty_{SF}(\pbar)$ denotes $\Th(\pbar) \cup \{SF\}\cup \{\Phi_{\infty} \}$.

\end{enumerate}
\end{notation}

\begin{definition}\label{cldef} For $\langle D, S(F), q\rangle \models T^\infty_{SF}(\pbar)$ and $X \subset D \cup S(F)$,
$$\cl(X) = q^{-1}(\acl(q(X)) )$$ where $\acl$ is the field algebraic
closure in $F$.
\end{definition}

An essential consequence of the standard fibers axiom is that
Definition~\ref{cldef} defines an almost quasiminimal closure relation
satisfying the countable closure condition from Definition~\ref{qmdefstr}.
This closure dimension restricts on the separate sorts to
 the dimension of the constituent strongly minimal sets that is expressed in
 $\Phi_{\infty}$.
This accomplishes  the aim of an ($L_{\omega_1,\omega}$-complete so
$\aleph_0$-categorical) $L_{\omega_1,\omega}$ theory with arbitrarily large
models.

%\sidebar{jb 1/23/22:    DawHarris often (e.g. the hypotheses of dh 4.11) write $\th^\infty_{SF}(\pbar)$ where they mean (I think )
%$T^\infty_{SF}(\pbar)$; they are using the axioms and don't yet know they are complete (and disregard that they are infinitary.)
%
%
%
%}

% \sidebar{jb 1/23/21 9:15 am CST: This is looking all right. But I have to add something to the proof. The conclusion of 4.11 implies that all models of  $\th^\infty_{SF}(\pbar)$
% are $L_{\omega_1,\omega}$-elementarily equivalent. It is not clear right now whether that is proved
% or whether they want to omit SF.  They don't point out that they can't add non-standard points to fibers. Actually, it probably right since the result is false if SF omitted.}

% We need to assume the models have infinite dimension as clearly finite dimensional models are not $\omega$-saturated.

%We use a definition of $\aleph_0$-homogeneity over the empty set which unlike that in
%\cite{Kirbyqm, BHHKK} makes no reference to closure in a combinatorial geometry.
%
%\sidebar{jb 1/23/23 10 pm: Actually DawHarris have defined the geometry (first sentence of 4.2) but it doesn't appear in their axiom. So I think this is at least technically correct as written.}

% \sidebar{1/23/21: We should eventually check whether in some sense Definition is equivalent to  \cite{Kirbyqm,BHHKK} definition.  This is explicit in eterovic and is what is proved in DawHarris although they refer to Kirby's definition.}

%Recall Definition~\ref{qmdef}.2a) \cite[p 4]{Etrev}
A class $\bK$ of models has $\aleph_0$-homogeneity over $\emptyset$  (Definition~\ref{qmdef})
(the precise statement is from \cite[p 4]{Etrev}) if the  models of $\bK$
are pairwise {\bf qf}-back and forth equivalent (Definition~\ref{bfdef}).
%\end{definition}

% \begin{remark} While Theorem~\ref{fitohom} implies all models of $T^{\infty}_{SF}(\pbar)$ satisfy the same sentences of $L_{\omega_1,\omega$, it does prove

\begin{theorem}\label{fitohom} \cite[Theorem 4.11]{DawHarris} If the standard model $\pbar$ of a modular curve  satisfies FIC1, then the class of models of $T^{\infty}_{SF}(\pbar)$
is  $\aleph_0$-homogenous over   $\emptyset$.  In particular, by Karp~\cite{Karp, Barwisebf}, all models of $T^\infty_{SF}(\pbar)$ are back and forth equivalent and so satisfy the same sentences of $L_{\omega_1,\omega}$. %In particular, all models of $T(\pbar)$ are $\omega$-saturated.
\end{theorem}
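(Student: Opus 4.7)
The goal is to replay the back-and-forth argument of Theorem~\ref{qe}, but with the hypothesis of $\omega$-saturation replaced by the two axioms in $T^{\infty}_{SF}(\pbar)$: the standard fibres axiom $SF$ and the infinite-dimension axiom $\Phi_\infty$, together with the reduction given by FIC1 through Corollary~\ref{getprin}. Fix two models $\qbar=\langle D,S(F),q\rangle$ and $\qbar'=\langle D',S(F'),q'\rangle$ of $T^{\infty}_{SF}(\pbar)$ and let $I$ be the system of partial isomorphisms between finitely generated substructures $U\subseteq\qbar$ and $U'\subseteq\qbar'$ described in Definition~\ref{ps}. The named constants in $E^{ab}(\Sigma)$ exhibit a member of $I$, so $I$ is non-empty; the task is to verify that $I$ has the forth property (back is symmetric).

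Given $f\in I$ with $\dom f=U$ and $x\in\qbar\setminus U$, I split into the standard cases. If $x$ is a special point, it is named in the vocabulary (Remark~\ref{namesp}), so I set $x'$ equal to the same named element of $\qbar'$. If $x\in S(F)\setminus U_S$, I use quantifier elimination in the algebraically closed field sort: $\Phi_\infty$ forces $F'$ to have infinite transcendence degree over the countable base, hence over the finitely generated field $L_U$, so the qf-type $f(\tp_{\rm qf}(x/L_U))$ is realized in $S(F')$; the resulting $x'$ extends $f$ on the field sort. If $x\in U_D$ the image is forced by $G$-equivariance, and I only enlarge the field-sort part of $U'$ to include the coordinates of $q'(gx')$ for $g\in G$, which is still finitely generated because $q'$ is constant on $\Gamma$-orbits by $SF$.

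The main case is $d\in D\setminus U_D$ Hodge generic. Corollary~\ref{getprin} tells me that, under FIC1, there is a single finite $\hat\gbar$ with
\[ \tp_{\rm qf}(q(\hat\gbar(d))/L_U)\models r_d(\vbar)\models \tp_{\rm qf}(d/U), \]
so it is enough to realise the finite qf-type $p:=f(\tp_{\rm qf}(q(\hat\gbar(d))/L_U))$ in the field sort of $\qbar'$. The type $p$ concentrates on the variety $Z_{\hat\gbar}$ (by $Mod^1_{\hat\gbar}$ in $\qbar$), and $Z_{\hat\gbar}$ is defined over $E^{ab}(\Sigma)\subseteq L_U$ by Lemma~\ref{holmo}; a Hodge-generic point of $Z_{\hat\gbar}$ over $L_U$ exists in $F'$ precisely because $\Phi_\infty$ makes $F'$ of infinite transcendence degree over $L_U$. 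Applying the first-order axiom $Mod^2_{\hat\gbar}$ in $\qbar'$ produces $d'\in D'$ with $q'(\hat\gbar(d'))$ realizing $p$. Then Corollary~\ref{getprin} (transported through $f$) forces $\tp_{\rm qf}(d'/U')=f(\tp_{\rm qf}(d/U))$, so $f\cup\{(d,d')\}$ extends to a partial isomorphism. The standard-fibres axiom $SF$ is what makes this extension well defined on the $G$-closure of $U_D\cup\{d\}$: it ensures $(q')^{-1}(q'(d'))=\Gamma\cdot d'$, matching the corresponding orbit in $\qbar$, so the $G$-action is preserved coherently and no accidental identifications occur.

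Once the forth property holds, $I$ is a qf-back-and-forth system between $\qbar$ and $\qbar'$ over $\emptyset$, which is the defining condition of $\aleph_0$-homogeneity over $\emptyset$. Karp's theorem~\cite{Karp,Barwisebf} then gives $\qbar\equiv_{L_{\infty,\omega}}\qbar'$, and \emph{a fortiori} $L_{\omega_1,\omega}$-equivalence. The subtle point — and the only place where FIC1 is really used — is the step where the infinitary type $r_d$ is replaced by the finite type over $\hat\gbar$; without FIC1 one would need $\omega$-saturation of $\qbar'$ (as in Theorem~\ref{qe}) to handle the infinitely many $\gbar$ simultaneously. The remaining delicate verification is that the point chosen to realize $p$ is genuinely Hodge generic rather than landing on a proper subvariety or on special coordinates, but this is exactly what a generic realization of the qf-type of $Z_{\hat\gbar}$ over $L_U$ supplies, using $\Phi_\infty$.
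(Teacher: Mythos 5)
Your proposal is correct and follows essentially the same route as the paper: replay the back-and-forth of Theorem~\ref{qe}, use Lemma~\ref{onto} and Corollary~\ref{getprin} (where FIC1 enters) to replace the infinitely many tuples $\gbar$ by a single finite $\hat\gbar$ in the Hodge-generic case, and conclude via Karp's theorem. Your explicit observation that $\Phi_\infty$ (infinite transcendence degree over the finitely generated $L_{U'}$) is what substitutes for $\omega$-saturation when realizing the finite qf-type on $Z_{\hat\gbar}$ is a detail the paper leaves implicit, but it is the intended argument, not a departure from it.
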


\begin{proof}
 Our task is to replace the $\omega$-saturation hypothesis  from
Lemma~\ref{qe} by adding the infinitary axioms and the condition FIC1. As in the
 proof of theorem~\ref{qe} we need only worry about Hodge generic points.
%Choose $\hat \gbar$ by Lemma~\ref{onto}.
 Suppose we have a partial function $f$ from $\qbar$ to $\qbar'$ with domain and
 range $U$ and $U'$ as in Lemma~\ref{qe} between models $\qbar$
and $\qbar'$ of $T^{\infty}_{SF}(\pbar)$. Proceed as in the proof of the second paragraph of
 Lemma~\ref{qe}. We vary  the argument for the `difficult case' from the 3rd paragraph.
  Choose $\hat \gbar$ by Lemma~\ref{onto}. Taking $\hat \gbar$ for the $\gbar$ in
  Lemma~\ref{onegbar}, for $x \in U_D$,
 there is   an $x'\in U_{D'}$ such that %$(q'(g_1 x'), \ldots q'(g_n x')
(*) $q(\hat \gbar (x'))
  \in S(F')^m$ realizes
$f(\tp_{qf} (q(\hat \gbar (x)) /L_U))$. We want to show that the same choice
$x'$ satisfies (*)
for every $\hbar\supseteq \hat \gbar $. % rather than just $\hat \gbar$.
This is immediate from Lemma~\ref{getprin}. The argument is completed by
induction as in the `final step' of the proof of Lemma~\ref{qe}.
\end{proof}

% \sidebar{jb 1/20/23 10:30: last sentence taken from pg 25 DawHarris.
% Is this embedding only needed in this step or are we gradually increasing
% domain of $\sigma$ as we build the back and forth?

% There is also some mystery on the bottom of page 24 of dawharris over what subset the type is over.  `simple property of types'. What is meant by $L$ is a mystery from page to page.}

\begin{remark}[\rm  FIC2] \label{fic2}{\rm Like FIC1, FIC2 is a finite index condition on Galois
representations into inverse limits. Now, however there are independence
conditions over the ground field.  \cite[Condition 4.8]{DawHarris} provides
sufficient conditions so that a minor modification of the proof of
Theorem~\ref{fitohom}, shows FIC2 implies homogeneity over models; pairs of
models are back and forth equivalent over a countable submodel. {\em This is
the first place in the argument where types over countable algebraically
closed fields rather than the empty set (i.e.\ a fixed countable field) are
encountered.} Combining this result with Theorem~\ref{fitohom}, the
homogeneity conditions are now stronger than those defining quasiminimal
excellence in \cite{BHHKK}. Thus, we apply that paper and obtain:

}

\end{remark}

\begin{theorem}\label{mainres} For any modular curve interpreted as a standard model  $\pbar$
    (Definition~\ref{stmod}) for $T^\infty(\pbar)$,  $T^\infty(\pbar)$ is almost
    quasiminimal excellent and so categorical in every
    infinite power.
\end{theorem}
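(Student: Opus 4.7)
The plan is to assemble the preceding machinery into the framework of Definition~\ref{qmdef} (almost quasiminimal excellent geometry) and then invoke the Crucial Fact (Remark~\ref{cf}) to conclude categoricity in every uncountable cardinal; $\aleph_0$-categoricity follows from the $\aleph_0$-homogeneity over $\emptyset$ supplied by Theorem~\ref{fitohom}. The class $\bK$ will be the models of $T^\infty(\pbar)$ (with $SF$ and $\Phi_\infty$ included as needed), and the closure will be $\cl$ from Definition~\ref{cldef}.

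First I would verify the Basic Conditions. The relation $\cl(X)=q^{-1}(\acl(q(X)))$ inherits monotonicity, finite character, idempotence, and exchange directly from the field algebraic closure on $S(F)$, so $(M,\cl)$ is a pregeometry. Closure of $\cl(X)$ under the vocabulary $\tau$ (and hence membership in $\bK$) is immediate: the field-sort part is an algebraically closed subfield, and the $D$-sort part is a union of $G$-orbits since $q$ is constant on each orbit and $SF$ guarantees fibers are precisely orbits. The countable closure property is the crucial use of the $SF$ axiom: if $|X|\le\aleph_0$, then $\acl(q(X))$ is countable in the field sort, and each fiber $q^{-1}(s)$ is a single orbit of the countable group $\Gamma$, so $q^{-1}(\acl(q(X)))$ is countable.

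Next I would verify Homogeneity. Over $\emptyset$, this is exactly Theorem~\ref{fitohom}, obtained from FIC1: the qf-back-and-forth system of Definition~\ref{ps} works without $\omega$-saturation once Lemma~\ref{onto} and Corollary~\ref{getprin} let one reduce each Hodge generic type to the finite tuple $q(\hat\gbar(d))$. Homogeneity over (countable) models requires FIC2 (Remark~\ref{fic2}); the modification is to run the same back-and-forth relative to the algebraically closed field generated by a countable submodel, where FIC2 gives surjectivity of the appropriate Galois representation onto $\overline\Gamma^m$ modulo a finite index error controlled as in Lemma~\ref{onto}. Special points are handled trivially because their coordinates are in $E^{ab}(\Sigma)$ and hence named.

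For the almost-quasiminimality clause~(3) of Definition~\ref{qmdef}, I would note that the two-sorted structure precludes a single generic type but that any $\cl$-basis $X$ of a model $H$ splits as $X=X_D\cup X_S$ with $q(X_D)\cup X_S$ a transcendence basis of the field sort; by definition $\cl(X)$ contains all of $S(F)$, and then $q^{-1}$ of this equals $D$ since $SF$ forces every $d\in D$ into the fiber over $q(d)\in\acl(q(X))$. This is the essence of Zilber's original ``almost'' modification recorded in Remark~\ref{qmhist}, and is the reason quasiminimality must be weakened in the two-sorted setting. With all the conditions of Definition~\ref{qmdef} verified, the Crucial Fact (Remark~\ref{cf}) — the Bays--Hart--Hyttinen--Kesälä--Kirby theorem — yields excellence and hence categoricity in every uncountable power; completeness plus $\aleph_0$-homogeneity gives $\aleph_0$-categoricity. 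The main obstacle I expect is the verification of homogeneity over countable submodels: one must argue that the FIC2-style condition, whose statement involves independence of parameters over the ground field, genuinely suffices to run the back-and-forth with the countable model as base rather than $\emptyset$, and this is the step that invokes the deepest input from Shimura/Serre (the open image theorem).
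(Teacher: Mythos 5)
Your assembly matches the paper's architecture: the basic conditions and the countable closure property for $\cl(X)=q^{-1}(\acl(q(X)))$ come from the standard-fibers axiom, homogeneity over $\emptyset$ is Theorem~\ref{fitohom} (from FIC1), homogeneity over countable models is the FIC2 modification recorded in Remark~\ref{fic2}, and the Bays--Hart--Hyttinen--Kes\"al\"a--Kirby theorem (Remark~\ref{cf}) then yields excellence and categoricity in all uncountable powers. But as written you only prove the conditional statement ``if FIC1 and FIC2 hold for $\pbar$, then $T^\infty(\pbar)$ is almost quasiminimal excellent.'' Theorem~\ref{mainres} is unconditional --- it asserts this for \emph{every} modular curve --- and the entire content of the paper's proof at this point is precisely the step you never discharge: FIC1 and FIC2 in fact hold for all modular curves, as proved in \cite[\S 5]{DawHarris}, with FIC1 resting on Serre's open image theorem \cite[\S 6]{Serre} and FIC2 on Ribet \cite{Ribet}. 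Without that verification your argument does not reach the stated theorem.

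Relatedly, your closing sentence misplaces the arithmetic input. The deduction ``FIC2 implies homogeneity over countable models'' is a soft model-theoretic modification of the back-and-forth of Theorem~\ref{fitohom} (this is what Remark~\ref{fic2}, citing \cite[Condition 4.8]{DawHarris}, records); Serre's open image theorem does not enter there. It enters in establishing that FIC1 itself holds for modular curves (with Ribet's work giving FIC2). That unconditional verification of the finite index conditions --- not the passage from them to homogeneity --- is the deep step, and it is exactly the piece your proposal leaves out.
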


\begin{proof} We need only that FIC1 and FIC2 hold for all modular
 curves. This is proved in \cite[\S 5]{DawHarris}, where the proof
for FIC1 relies heavily on \cite[\S 6]{Serre} and FIC2 on \cite{Ribet}.
\end{proof}

 With
further effort they extend this result to Shimura curves.

\begin{remark} \label{compsentence}{\rm

 Keisler's theorem  \cite[Corollary 5.10]{Keislerlq}
	and work of Shelah \cite[\S 7]{Baldwincatmon}
	show that an $\aleph_1$-categorical sentence $\phi$ of
$L_{\omega_1,\omega}$ not only  has only countably many types in any
countable  fragment of $L_{\omega_1,\omega}$  containing $\phi$ (Keisler) but
has a completion\footnote{That is, a sentence $\phi^*$ that implies $\phi$ and decides every
$L_{\omega_1,\omega}$-sentence.} (Shelah).  Equivalently, the completion must
specify the isomorphism type of the countable model. The only such completion
consistent with having an uncountable model is adding $\Phi_\infty$.

We have used FIC1 to prove categoricity in all powers. In fact,
$\aleph_1$-categoricity implies FIC1. For this, \cite{DawHarris, Etrev} argue
that the weaker hypothesis of having just countably  many types over the
empty set in the theory  $T^{\infty}_{SF}$ implies FIC1. If FIC1 holds, for
some $z$, by Lemma~\ref{onto}, for every  $\gbar$, there is $\hbar \supseteq
\gbar$ with a $\Gamma_\gbar/\Gamma_\hbar$-orbit contained in
$\psi_{\hbar\gbar}^{-1}(z)$ that projects to that $\Gamma_\gbar$ orbit. So
under the assumption that FIC1 fails, there is a $\gbar$, such that for every
$\hbar \supseteq \gbar$ there are distinct $\Gamma_\gbar/\Gamma_\hbar$-orbits
$O_1,O_2$ contained in $\psi_{\hbar\gbar}^{-1}(z)$ that project to the same
$\Gamma_\gbar$-orbit.

By Lemma~\ref{3.3}, if
two points are Galois equivalent they realize  the same quantifier free $\tau$-type;
so $O_1,O_2$ realize distinct Galois orbits (and so any two orbits that project
 to them must realize distinct $\tau$-types). But
 since $\overline \Gamma$ acts transitively on each $Z_\gbar$, there is a complete
  tree of splittings of $\aut(\CC/L)$ orbits that all project to $z$.
   This contradicts Keisler's theorem. So $\aleph_1$-categoricity
   of $T^{\infty}_{SF}$ implies FIC1.

}\end{remark}
%but not onto $ \overline{\Gamma}_{\hat \gbar}^{m}$; so there are at least two
%orbits under $ \overline{\Gamma}_{\hat \gbar}$ that restrict to the same
% orbit under $\Gamma_\gbar$.
%
%NONSENSE So $\rho^{-1}_z$ has infinitely many orbits.
%Since $\rho^{-1}_z$ is profinite, it has continuum many orbits.
%
%But we have $\rho_z$ maps into $ \overline{\Gamma}$ so by Lemma~\ref{3.3}, if
%two points are Galois equivalent they realize  the same quantifier free $\tau$-type. This contradicts Keisler's theorem.
%}
%A stronger condition is $\omega$-stability

%\begin{remark}{\rm \cite{DawHarris} assert that extending the categoricity results from modular curves to Shimura curves requires separate arguments
%for FIC1 and FIC2, which are contained in \cite[5.2]{DawHarris}. }
%\end{remark}

\begin{remark} \label{ficstatus} {\rm
 \cite[\S 5]{DawHarris}, using both Serre's open mapping theorem \cite[\S 6]{Serre} for the finite index condition and work by \cite{Ribet} on Shimura curves show
FIC1 and FIC2 hold for all modular and Shimura curves.  So our remaining
sections concern  higher dimensional varieties. FIC1 is known for some higher
dimensional Shimura varieties and conjecturally for others, while FIC2 is
true for all \cite{Etrev}.

%\sidebar{the model theoretic condition deduced from fIC1 is normally immediate from the one deduced from FIC2.  But Eterović shifts definitions a bit.}

\cite{DawHarris} use both to prove categoricity. Since the Galois group is
not accessible in our formal language, FIC1 cannot be directly expressed in
the two-sorted theory. So the goal of a `fully formal invariant' cannot be
achieved unless
 explicit reliance on the finite index conditions as an hypothesis is avoided.}
\end{remark}

 \section{First order Excellence}\label{section:notop}

Here is the opening paragraph of \cite{BHP}.

\begin{quotation}
\emph{
    Let $\GG =\GG^n$ be a complex algebraic torus, or let $\GG$ be a complex abelian variety.
Considering $\GG(\CC)$ as a complex Lie group,
with $\bL\GG = T_0(\GG(\CC))$ its (abelian)
Lie algebra, the exponential map provides a
surjective analytic homomorphism
$$\exp: \bL\GG \twoheadrightarrow \GG(\CC).$$
}
 \end{quotation}

In the spirit of Zilber, their paper aims at finding `algebraic descriptions' of the cover $\exp$ which
characterize the standard structure (at least up to categoricity in power).  They solve a more
general problem by providing a first order theory $\hat T$ for the situation and showing each  model
$\tilde M$  ($\hat M$ here) of $\hat T$ is determined by  relations among two designated
substructures and a certain transcendence degree. In this generality, the result is proved for any
abelian group of finite Morley rank (henceforth fmr groups).  Then, under slightly stronger
hypotheses, the result becomes a true categoricity result for, in particular, an abelian variety
defined over a number field.

We address in this section  four new ingredients: formalized non-standard
covers, `first order excellence', Kummer theory, and a distinction between
classification and categoricity. First order excellence  appears to be both
necessary and applicable for higher order Shimura varieties.

As noted in \cite{BHP}, the quasiminimal approach studied earlier in this paper suffices to prove
the $L_{\omega_1,\omega}$-categoricity in power for Abelian varieties. The goal of this section is
to identify the distinctive elements of the \cite{BHP} proof that later reappear in \cite{Etrev}.

\subsection{The two-sorted structure and fmr groups}\label{2sort}

A first order theory $T$ is stable in $\kappa$ if any $M \models T$, with
$|M|=\kappa$, $|S(M)| = \kappa$. ($S(M)$ denotes the set of $1$-types over
$M$.) Morley showed that $\omega$-stability (more properly,
$\aleph_0$-stability) of a theory $T$ is equivalent to stability in all
powers (and also to the Morley rank having an ordinal value for each type).
We need here a slightly weaker condition called {\em superstability}: $T$ is
stable in $\kappa$ if $\kappa \geq 2^{\aleph_0}$.

The theory of $(\mathbb{Z},+)$ is one of the prototypical strictly
superstable theories\footnote{The other one is the theory of countably many
equivalence relations $E_n$ such that for each $n$, each $E_n$-class is split
into infinitely many $E_{n+1}$-classes (and $E_{n+1}\subseteq E_n$).} (that
is, superstable, but not $\aleph_0$-stable). One can fix arbitrarily the
congruence class of an element $x$ for each $n$. This gives   $2^{\aleph_0}$
distinct types realized by  non-standard integers.

There is an extensive theory of fmr groups (see~\cite{BorovikNesinbook,ABC}).
We need here only the basics. In particular, Macintyre's result \cite{Macab}
that an $\omega$-stable group is divisible by finite. We now introduce the
two-sorted theory; with that notation we are able at the end of this section
to outline the main steps of the proof.

Unlike \cite{DawHarris} where  $\underleftarrow{\lim}\ Z_\gbar$ is in the
background of the proof of (our)    Theorem~\ref{fitohom} but not the
statement, \cite{BHP} build the structure of non-standard covers into the
vocabulary of the two sorted structure by the $\rho_n$ below.

\cite[\S 2.2]{BHP} use the  inverse limit of Definition~\ref{bhphat} for {\em
divisible} abelian groups; although it is not profinite, they refer to it as
a profinite universal cover denoted $\hat G$ of $G$ and $G$ is renamed as
$M$. Although the hat has only one meaning in \cite{BHP}, it becomes
overloaded here so we denote the inverse limit defined below as $\tilde M$.
% This gives us the freedom to distinguish an overloading
%of the symbol $\tilde M$
While in \cite{BHP} a typical 2-sorted (3-sorted in \S \ref{smoothvar})
structure $\hat \tau$ is represented as either $(\tilde M, M)$ or $\tilde M$,
we write $\hat M = (\tilde M, M)$ and $\tilde M$ for the  or (profinite
cover) inverselimit  from \cite[1.2, 2.1]{BHP}  as that is the actual usage
in most of the cited paper.

% Although the `hat' in this section can be confused with the hat in other sections, we keep it because 1) only that use appears in this section and 2) there is a strong analogy with the version in Sections~\ref{modshicurve} and \ref{modshi}.
% Note that while the construction is applied only to a divisible group $G$ it might be restricted to a definable subgroup $H$ of $G$: such an $H$  is necessarily divisible by finite.

\begin{definition} [$\tilde M$]\label{bhphat}
Given a commutative, divisible, abelian group $(M,+)$, consider the inverse
limit $\tilde M=  \underleftarrow{\lim}\ M_n$ of isomorphic copies $M_m$ of
$M$ with the index set partially ordered by $m\leq n$ if and only $m|n$ and
with maps $\eta_{nm}$ (multiplication by $\frac{m}{n}$) taking $M_n \mapsto
M_m$. Concretely, $(\tilde M,+)$ is the subgroup of the direct product of
$\omega$ copies of $M$,
%writing $g_n$ for $\rho_n(g)$,
containing those sequences ($\langle g_k: 1 \leq k< \omega\rangle$) such that
if  $k=nm$, $g_m= n \times g_k$ and $g_n = m\times g_k$. \end{definition}

\begin{notation} [The vocabulary $\hat \tau$]\label{bhpvoc}
Let $\GG$ be the given abelian group and $T:= \th(\GG)$ in a large enough
countable language that $T$ has quantifier elimination. Further, let $\hat T$
be the theory of $(\hat\GG, \GG)$ in the two-sorted language $\hat \tau$
consisting of the maps $\rho_n: \hat\GG \rightarrow \GG$  for each $n$, the
theory $T$ and, for each $\acl^{eq}(\emptyset)$-definable subgroup $H$ of
$G$, a predicate $H$ for $H$ and a predicate $\hat H$ for $\{x \in \hat G:
\rho_n(x) \in H, n\in\NN\}$.

Although the kernel of $\rho= \rho_1$ is definable in the vocabulary given, a
further predicate $\ker^0$ is included denoting the divisible part of the
kernel (otherwise, it is only type-definable).
\end{notation}

The axioms \cite[2.5]{BHP} of $\hat T$ are chosen  so that

\begin{theorem}\cite[2.7, 2.8, 2.21]{BHP}\label{hatTthm}
For an fmr group $\GG$, $(\underleftarrow \GG,\GG,\rho_0) \models \hat T$ and
therefore $\hat T$  admits quantifier elimination and is superstable of
finite $U$-rank.
\end{theorem}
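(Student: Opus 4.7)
The plan proceeds in three stages, mirroring the three assertions. First, that the standard structure $(\tilde\GG,\GG,\rho_0)$ satisfies the axioms of $\hat T$ is essentially a bookkeeping verification: the axioms of \cite[2.5]{BHP} are formulated so that the coordinate projections of the inverse limit from Definition~\ref{bhphat}, the predicates $\hat H = \rho_0^{-1}(H)$, and the divisible part $\ker^0$ of $\ker(\rho_0)$ play precisely the role the axioms demand. Divisibility of $\GG$ guarantees that $\ker^0$ is well-defined as the maximal divisible subgroup of $\ker(\rho_0)$.

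Second, I would establish quantifier elimination by a back-and-forth argument between $\omega$-saturated models $(\tilde M, M)$ and $(\tilde M', M')$ of $\hat T$, in the spirit of Theorem~\ref{qe}. The theory $T = \th(\GG)$ has quantifier elimination by the choice of vocabulary, so the field/group sort poses no difficulty. For an element $\tilde a$ in the cover sort, its quantifier-free type over a finitely generated substructure $U$ is determined by (i) the quantifier-free types of $\rho_n(\tilde a)$ in $M$ for every $n$, coherently glued by the divisibility relations $n\cdot \rho_{mn}(\tilde a) = \rho_m(\tilde a)$ built into $\hat T$, and (ii) membership in each $\hat H$ together with $\ker^0$-coset information. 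A partial isomorphism preserving these data extends to a further element by realizing, in the target, the coherent system of projected types and coset memberships; this is where $\omega$-saturation is used.

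Third, superstability and finite $U$-rank follow from a type-counting argument once QE is established. Over a set $B \subseteq \tilde M$ with $|B| = \kappa \geq 2^{\aleph_0}$, the type of a tuple $\tilde a$ decomposes into the projected types over the $\rho_n$-images of $B$ (at most $\kappa$ in total by $\omega$-stability of $T$, which follows from the fmr hypothesis) and coset information in the kernel (at most $2^{\aleph_0} \leq \kappa$), whence $|S_{\hat T}(B)| \leq \kappa$ and $\hat T$ is superstable. For finite $U$-rank, Lascar additivity along $\ker(\rho_0) \to \tilde\GG \to \GG$, combined with Macintyre's theorem presenting $\GG$ as divisible-by-finite, should show that each layer contributes finite rank.

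The main obstacle I anticipate is the $U$-rank computation: the kernel of $\rho_0$ is profinite-like, and a naive treatment yields only superstability, precisely as in the strictly-superstable prototype $(\ZZ,+)$ discussed in \S\ref{2sort}. The predicates $\hat H$ and $\ker^0$ are included in the vocabulary precisely to separate the divisible part from the finite torsion residues, and the technical core of the argument is to verify that this decomposition is finite-rank-preserving and that the inverse-limit compatibility conditions do not introduce hidden rank beyond what is already visible in $\GG$.
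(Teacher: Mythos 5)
First, a point of comparison: the paper itself contains no proof of this theorem. It is imported from \cite[2.7, 2.8, 2.21]{BHP}, with only the surrounding remark that the axioms \cite[2.5]{BHP} of $\hat T$ ``are chosen so that'' the statement holds. So your plan can only be measured against the cited source, and in broad outline it has the right shape: verify the standard structure satisfies the axioms, prove QE by a back-and-forth between $\omega$-saturated models in which a cover-sort type is reduced to its projected types in the base sort together with coset data for the distinguished subgroups, and then obtain superstability and finite $U$-rank from an analysis of kernel plus image (Lascar inequalities, with Macintyre's divisible-by-finite theorem controlling the base).

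There are, however, two concrete problems. First, you misread the vocabulary: $\hat H$ is not $\rho_0^{-1}(H)$ but $\{x \in \hat G : \rho_n(x) \in H \ \text{for all}\ n\in\NN\}$ (Notation~\ref{bhpvoc}). The entire point of these predicates is that a \emph{quantifier-free} formula can record coherent membership data across all levels of the inverse system of Definition~\ref{bhphat} at once; with your weaker reading, the quantifier-free type of an element of the cover sort would not determine the data your steps (i)--(ii) rely on, and the back-and-forth stalls. Relatedly, the issue with $\ker^0$ is definability, not well-definedness: the divisible part of $\ker\rho$ exists regardless, but it is only type-definable without the extra predicate, and the paper explicitly notes that many elements of $\ker\rho$ are not divisible \emph{within} $\ker\rho$, so ``divisibility of $\GG$'' does not settle anything here. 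Second, the two assertions carrying the real content are precisely the ones you defer: that the (unlisted) axioms of $\hat T$ force enough agreement of kernel and finite-index data for your ``coherent system of projected types and cosets'' to be realizable in any $\omega$-saturated model, and that the kernel sort has \emph{finite} $U$-rank, so that the Lascar inequalities yield finite rank for $\hat T$ rather than bare superstability (which, as your own $(\ZZ,+)$ analogy shows, is all that naive counting gives). Flagging these as the technical core is honest, but without an explicit axiom list and a rank computation for the kernel the theorem is not yet established by this plan.
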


 Although the $T$ in Notation~\ref{bhpvoc} is $\omega$-stable, $\hat T$
is only superstable; also, many elements of $\ker(\rho)$ are not divisible in $\ker(\rho)$.

% \sidebar{jb 12/1/22:  It may be that the need for $L^{\eq}$ arises from the two sorts; it seems to me that $T$ should eliminate imaginaries. }
% Since we are dealing with groups, not algebraically closed fields, we must work in $L^{\eq}$.
%\sidebar{likely out or reduced to a sentence.
%\begin{remark}[$T^{eq}$]\label{imag}{\rm A standard construction in model theory
% expands the vocabulary of a theory by `adding imaginary sorts' to give canonical representatives for each definable equivalence relation.  This expansion
%is functorial and preserves most properties. The result is that models
%of the expanded theory are closed under definable quotient.  More precisely
%the associated category of models has a pre-topos completion. See
%\cite{Harnikcat}. Cruically, this expansion is not necessary for algebraically closed fields \cite{Poizatim}.}
%\end{remark}}

%For quantifier elimination purposes, we actually assume (by expanding T by constants if necessary)
%that every $\acl^{\eq}(\emptyset)$-definable connected subgroup of $\GG^N$ is $\emptyset$-definable.
%%We now propose a set of axioms
%%An arbitrary $\hat L$-structure is denoted $(\tilde M, M)$.
%We summarize
%\cite[Axioms 2.5]{BHP}.  Here $G$ and $H$ are $\emptyset$-definable subgroups of some $\GG^n$ and $\hat G, \hat H$ are their pullbacks in $\hat \GG$.

%\sidebar{The nonorthogonality notion  may be already in overleaf on line but I can't find it by searching.}

\begin{remark}[Quasiminimality, unidimensionality, notop] \label{kf} {\rm Abelian varieties
 as opposed to fmr groups, can be
	handled either by the quasiminimality methods of Section~\ref{modshicurves} or by the
	methods described in this section.
A crucial distinction from Section~\ref{modshicurves} is that the former considered only the  theory
of  unary functions from a group acting on the domain, while here we
have the full group structure.

To explain the fmr proof we need some further model theoretic background. In
general two   types $p,q$ over $M$ are {\em orthogonal} when in different
models $N$ extending $M$ the number of realizations of $p$ and $q$ can be
varied arbitrarily. {\em Non-orthogonality} for strongly minimal sets has a
particularly clear meaning. The strongly minimal sets $D_1$ and $D_2$ are
non-orthogonal if there is a definable finite to finite binary relation on
$D_1 \times D_2$. A theory is unidimensional if all types are non-orthogonal.

The three features that  underlie the \cite{BHP} proof   are.

\begin{enumerate}
\item A fmr abelian group has {\em finite width} \cite[XV.1]{Baldwinbook}
    (aka almost $\aleph_1$-categorical \cite{Lascargrp}): Any model is the
    algebraic closure of the union of the bases of  a collection of
    strongly minimal $D_i$ for $i< n<\omega$. The $D_i$ are defined over
   the prime model (the unique up to isomorphism model elementarily
   embedded in every model of the theory).

    \item  In models of $\hat T$ with  $M_0$ the prime model of $T$ and
        where $\GG$ is defined over a number field $k_0$, Kummer theory
	allows the control of $\rho^{-1}(M_0)$ by  the kernel $\rho^{-1}(0)$.

        \item In studying  Abelian varieties the $n$ in 1) can be taken as
            $1$ because the variety is interalgebraic with an algebraically
            closed field and
	so {\em almost strongly minimal}
        ($M =\acl (D)$ for strongly minimal $D$).
        \end{enumerate}

Since Kummer theory doesn't apply to arbitrary Shimura varieties, both 2) and
3) fail for
  more general higher dimensional Shimura varieties  (see Section~\ref{Shivar}).}
  %This was a quite surprising twist of events, at
%  least to both of us.}
  \end{remark}

%\sidebar{Why say, as expected. I was gobsmacked.  This was the big surprize
%of the November revision of Eterovic. The earlier version appeared aimed at quasiminimality in the higher dimension.  In fact, while Martin Bays had thought that couldn't happened he had been under the impression until I wrote
%him that Sebasitian proved quasiminimality.}

\subsection{First order Excellence and fmr groups}\label{foex}

Shelah's main gap program defines a sequence of properties $X$ of countable
first order theories forming a sequence of dichotomies \cite[\S
5.5]{Baldwinphilbook} such that: if $T$ satisfies $X$, $T$ has the maximal
number of models in every uncountable cardinal. If $T$ fails $X$, the models
of $T$ satisfy conditions useful for classification. (e.g. stability implies
the existence of the `non-forking' independence relation). The positive side
of the final  dichotomy in the sequence is superstable {\em without the
omitting types order property} (denoted \textbf{notop}).  Under this
hypothesis, Shelah (\cite{Shelahbook2nd} and earlier papers) showed that an
appropriate class of models of $T$ had a notion of independence among
structures with $n$-amalgamation for all $n$ that yields the classification
of models. Hart \cite{Hart} reduced the amalgamation requirement to
$2$-amalgamation and this reduction was extended to the quasiminimal
excellent case in \cite{BHHKK}.
%We now sketch a bit of that framework, which we dub {\em first order excellence} in order to frame the solution
%for the finite Morley rank groups.
In Section~\ref{Shivar}, we note this `notop' approach is used to study higher dimensional Shimura
varieties.

In Section 3 of \cite{BHP} the techniques of \cite{Hart} are adapted to the
specific framework here to establish a  decomposition of models  of $\hat T$
analogous to that in Remark~\ref{kf}  for models of $T$. This yields

\begin{theorem} \cite[Theorem 3.31]{BHP} \label{3.31}
 Each model $\hat \Mscr$ of $\hat T$ is determined up to
isomorphism by the transcendence degree of the algebraically closed field $K$ such
that $M \cong \GG(K)$, the isomorphism type of the inverse image, $\hat{M_0}$, of the prime model
$M_0$ of $T$,
and the isomorphism type of $M$ over $M_0$.
\end{theorem}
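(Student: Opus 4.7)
The plan is to prove the theorem by a back-and-forth argument between two models $\hat\Mscr_i = (\tilde M_i, M_i)$ of $\hat T$ ($i=1,2$) whose three invariants agree. Invariant (c) combined with the presentation $M_i \cong \GG(K_i)$ from (a) yields an isomorphism $\beta \colon M_1 \to M_2$ that is the identity on (a fixed copy of) $M_0$, and invariant (b) yields an isomorphism $\alpha \colon \hat{M_0}^{(1)} \to \hat{M_0}^{(2)}$. These can be chosen compatibly, so that $\rho \circ \alpha = \beta \circ \rho$ on $\hat{M_0}^{(1)}$; the task reduces to extending $\alpha \cup \beta$ to an isomorphism of the full two-sorted structures.

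I would organise the induction along the finite-width decomposition recalled in Remark~\ref{kf}: $M_1 = \acl(M_0 \cup B)$ for a finite independent union $B$ of bases of the strongly minimal components defined over the prime model of $T$. Enumerate $\tilde M_1$ as $\hat{M_0}$, followed by compatible systems $(\tilde b_n)_n$ with $\rho_n(\tilde b_n) = b$ for each $b \in B$, followed by preimages of points in $\acl(M_0 \cup B)$. At each stage of the back-and-forth one must realise, on the $\hat\Mscr_2$ side, the quantifier-free $\hat\tau$-type of the next chosen preimage over the data already matched; by quantifier elimination for $\hat T$ (Theorem~\ref{hatTthm}) it suffices to realise the complete first-order type. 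The engine that supplies such extensions is the first-order excellence of $\hat T$: it is superstable of finite $U$-rank (Theorem~\ref{hatTthm}) and notop, so Hart's reduction \cite{Hart} provides $n$-amalgamation from $2$-amalgamation, and nonforking in the superstable theory gives the requisite independence relation.

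The main obstacle is the Kummer-theoretic step that controls types of preimages over $\hat{M_0}$. For $x \in M_0$, the preimage $\rho_n^{-1}(x)$ is a torsor under $\ker \rho_n$, and its $\hat\tau$-type over $\hat{M_0}$ must be determined once $\hat{M_0}$ is specified as an invariant; otherwise a fourth datum would be needed. This is exactly where Kummer theory for $\GG$ over a number field and the $\ker^0$ predicate from Notation~\ref{bhpvoc} enter, separating the divisible and non-divisible parts of the kernel and witnessing that the torsor structure is pinned down by $\hat{M_0}$. With that control in hand, the notop $2$-amalgamation extends the partial isomorphism at each step, the back-and-forth runs to completion along the finite-width enumeration, and one concludes $\hat\Mscr_1 \cong \hat\Mscr_2$.
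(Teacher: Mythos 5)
First, note that the paper you were asked to match does not actually prove this statement: it is quoted from \cite{BHP}, and the survey's entire ``proof'' is the sentence preceding the theorem, namely that in Section 3 of \cite{BHP} the techniques of \cite{Hart} (notop, first-order excellence, reduction of $n$-amalgamation to $2$-amalgamation) are adapted to obtain a decomposition of models of $\hat T$ analogous to the finite-width decomposition of models of $T$ recalled in Remark~\ref{kf}. Your skeleton --- back-and-forth organised along the finite-width decomposition, quantifier elimination and superstability of $\hat T$ from Theorem~\ref{hatTthm}, Hart's $2$-amalgamation as the extension engine --- is consistent with that description, so in outline you are on the intended track.

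The genuine problem is where you locate the ``main obstacle.'' Kummer theory is not part of the proof of Theorem~\ref{3.31} and cannot be: the theorem is asserted for an arbitrary commutative finite Morley rank group, whereas the Kummer-theoretic control of $\rho^{-1}(M_0)$ by the kernel (Remark~\ref{kf}(2)) is only available when $\GG$ is an abelian variety (or torus) defined over a number field, via \cite{Gavk,BGH}. Its role in \cite{BHP} is precisely the opposite of the one you assign it: it is used later, in Theorem~\ref{bhp4.6}, to \emph{remove} the invariant ``isomorphism type of $\hat{M_0}$'' and replace it by the isomorphism type of $\ker\rho$ alone. In Theorem~\ref{3.31} the isomorphism type of $\hat{M_0}$ is part of the given data, so the torsors $\rho_n^{-1}(x)$ for $x\in M_0$ live inside the matched structure $\hat{M_0}$ and need no extra control; the step that actually carries the weight is showing that for elements \emph{outside} $M_0$ --- the independent generics $b\in B$ in your enumeration and then their algebraic closure --- the $\hat\tau$-type over $\hat{M_0}$ of a compatible system $(\tilde b_n)_n$ of division points is determined by the type of $b$ over $M_0$, and that the resulting independent pieces can be amalgamated. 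That is exactly what the notop/excellence decomposition of \cite[\S 3]{BHP} establishes, and it is the step your sketch defers to a tool that is both unnecessary here and unavailable in the stated generality. (Relatedly, your assertion that the isomorphisms $\alpha$ and $\beta$ ``can be chosen compatibly'' is not free; arranging that compatibility is part of what the decomposition theorem delivers, not an input to it.)
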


\subsection{Abelian Varieties}\label{abelianvarieties}

From the model theoretic standpoint, an {\em Abelian variety} is a complete
algebraic variety whose points form a group such that the group operations
are definable in the ambient field. For Abelian varieties, Kummer theory
eliminates (as in \cite{Gavk,BGH}) the reliance in Theorem~\ref{3.31} on
knowing the isomorphism type of $\check M_0$ over the kernel. The situation
described in the opening paragraph of \S~\ref{section:notop} is a special
case.  Namely, let $\GG$ be (the formula defining) an abelian variety
$\GG(K)$ over a field $K$ as in the introduction to
Section~\ref{section:notop}. Assume $\GG(\CC)$ and its ring of endomorphisms
are definable over a number field $k_0$. With this notation:

\begin{theorem} \cite[Theorem 4.6]{BHP} \label{bhp4.6}
 a model $\hat M =\langle \tilde M, M,q\rangle $ of $\hat T$ is determined up to
isomorphism by the transcendence degree of the algebraically closed field $K$
such that $M \cong \GG(K)$, and the $\hat \tau$ isomorphism type of $\ker
\rho$.
\end{theorem}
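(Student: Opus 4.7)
The plan is to deduce Theorem~\ref{bhp4.6} from Theorem~\ref{3.31} by showing that, under the abelian-variety hypotheses, the third invariant in Theorem~\ref{3.31} (the isomorphism type of $M$ over $M_0$) is redundant, and the second (the $\hat\tau$-isomorphism type of $\hat M_0$) is determined by the $\hat\tau$-isomorphism type of $\ker\rho$ alone.

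For the first reduction, I would invoke item~3 of Remark~\ref{kf}: because $\GG$ is an abelian variety, $M \cong \GG(K)$ is almost strongly minimal, that is $M = \acl(D)$ for some strongly minimal $D$ interalgebraic with the algebraically closed field $K$. Consequently $T = \th(\GG)$ is unidimensional, so any $M \models T$ extending the prime model $M_0$ is determined up to isomorphism over $M_0$ by the single dimension invariant $\mathrm{trdeg}(K/\overline{k_0})$, which is recovered from $\mathrm{trdeg}(K)$ together with the (fixed, prime) $M_0$.

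For the second reduction, I would invoke Kummer theory for abelian varieties as used in \cite{Gavk, BGH}. Since $\GG$ is defined over a number field $k_0$ and $M_0 \cong \GG(\overline{k_0})$ is the prime model (hence unique up to isomorphism), the $\hat\tau$-structure on $\hat M_0 = \rho^{-1}(M_0)$ is obtained from $M_0$ by adjoining compatible systems of $n$-division points. Kummer theory asserts precisely that, for $P \in M_0$ of infinite order modulo torsion, the action of $\mathrm{Gal}(\overline{k_0(P)}/k_0(P))$ on $\rho^{-1}(P)$ has image of finite index in the full $\ker\rho$-torsor on that fiber, and that these actions assemble compatibly across $M_0$ into a structure encoded entirely by $M_0$ together with the $\hat\tau$-isomorphism type of $\ker\rho$---the latter carrying the Tate-module, torsion and divisible-part information via the predicates $\hat H$ and $\ker^0$ built into $\hat\tau$ (Notation~\ref{bhpvoc}). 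Hence a $\hat\tau$-isomorphism $\ker\rho \to \ker\rho'$ extends canonically to an isomorphism $\hat M_0 \to \hat M_0'$.

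The main obstacle is the Kummer-theoretic step: one must verify that the $\hat\tau$-structure on $\ker\rho$ (torsion, divisibility, the subgroups $\hat H$ for $\acl^{eq}(\emptyset)$-definable $H \subseteq G$) suffices to reconstruct all Galois-theoretic data appearing in $\hat M_0$, and that the `finite index' ambiguity in classical Kummer theory is absorbed by the freedom to choose representatives within $\rho$-fibers when building the extension. This is precisely where the abelian-variety-over-a-number-field hypothesis is essential; without it, as the paper notes for more general Shimura varieties, one must fall back on the full Theorem~\ref{3.31}. Once the Kummer step is verified, Theorem~\ref{3.31} upgrades the two partial isomorphisms---$\hat M_0 \cong \hat M_0'$ from the Kummer step and $M \cong M'$ over $M_0$ from the unidimensional step---to the desired isomorphism $\hat M \cong \hat M'$.
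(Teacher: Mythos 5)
Your proposal matches the paper's own treatment: the paper derives Theorem~\ref{bhp4.6} from Theorem~\ref{3.31} exactly as you do, using almost strong minimality (item 3 of Remark~\ref{kf}) to make the ``isomorphism type of $M$ over $M_0$'' invariant redundant, and Kummer theory for abelian varieties over a number field (item 2 of Remark~\ref{kf}, via \cite{Gavk,BGH}) to reduce the $\hat\tau$-isomorphism type of $\hat M_0$ to that of $\ker\rho$. The paper, being a survey, likewise leaves the Kummer-theoretic verification to \cite{BHP} and its sources, so your sketch is essentially the same argument at the same level of detail.
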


%\sidebar{jb 1/28/23: Any rewordng of last sentence of remark \ref{cfi}. Does more need to be said?}

\begin{remark}[Complete formal invariant]\label{cfi}
{\rm Theorem~\ref{bhp4.6} gives categoricity in all uncountable cardinalities
 by adding the $L_{\omega_1,\omega}$ sentence characterizing  the standard kernel.
  But Theorem~\ref{bhp4.6} is more general than categoricity; it shows that models with
  non-standard (possibly uncountable) kernel are characterized by the
  $\hat\tau$-diagram of the kernel.  Of course, this statement cannot be
  formalized in languages with bounded
length of conjunctions since the kernels can be arbitrarily large. But
Zilber's goal (just after Notation~\ref{gs}) only aimed at complete formal
characterization for prototypical mathematical structures. }
\end{remark}

\section{Higher Dimensional Shimura Varieties}\label{Shivar}

A Shimura variety is a higher-dimensional generalization  of a modular curve
that arises as a quotient variety of a Hermitian symmetric space $X^+$ by a
congruence subgroup of a reductive algebraic group defined over $\QQ$. We
consider Shimura varieties that are moduli spaces for generalized algebraic
varieties.
% The Shimura variety is treated as the quotient  by a discrete
%group acting on the universal cover $X^+$.
Rather than discussing   further
technical details on the definition of a Shimura datum $(G,X)$, we survey the
differences that arise in generalizing the results in Remark~\ref{ficstatus}
about Shimura curves to higher dimensional Shimura varieties: $S(\CC) =
\Gamma \setminus X^+$.

Central difficulties arise directly from the higher dimension in two ways.  First, in the curve case
the $2$-sorted structure is (almost)-quasiminimal because the variety in field sort is a curve and
so strongly minimal and the
geometric closure on the cover sort is given by $a \in \cl(X)$ if $a \in q^{-1} (\acl((q(X))$.
 Quasiminimality can fail in the higher dimensions.  Second, rather than special points which are
 fixed points of some $g$, one must treat {\em special subvarieties} \cite[\S 3.4]{Etrev} and finite
 unions thereof, {\em special domains}. The fact that these are not merely points leads to several
 difficulties.

\begin{enumerate}
\item The structure of the covering sort is no  longer strongly minimal. Even after naming the
    elements of the group the special subvarieties give a complicated structure on the covering
    sort.
    \item In the curve case the intersection of special domains was a point; that may fail in higher
	dimensions.
        \item The theories of two inverse limit structures $\hat \pbar$ and
            $\tilde \pbar$ are
	    considered as the covering space. The  first structure is the analog of
	    $\underleftarrow{\lim}\  Z_{\gbar}$ (Definition~\ref{zgbar}). The second consists only of the standard points of this limit. The
	    canonical universal cover $\pbar$ satisfies the first order $\th(\tilde \pbar)$ but not
            in general $\th(\hat\pbar)$ \cite[Example 5.7, Corollary 5.14]{Etrev}.
            \item An $L_{\omega_1,\omega}$ categorical axiomatization is not claimed. Each model can
		be precisely characterized but the characterization is not in $L_{\omega_1,\omega}$.
		See Remark~\ref{cfi}.

           % \sidebar{jb Jan 15, 23: FIX THIS after finite index conditions are explained in the
	   % modular case.}
                \item Finally, even this characterization depends on whether the variety under
		    consideration satisfies {\em finite index conditions} as in the modular  case.
		      Although FIC1 and FIC2 are true in the modular curve case,
                    here  the truth of $FIC1$ for $\pbar$ is actually equivalent
		    to the characterizability  of models of $T^{\inf}_{SF}(\pbar)$
since \cite{Etrev} shows FIC2 is true.
     \end{enumerate}

%\sidebar{1/11/23: I have commented out the earlier version; it has a few points that might need attention.}

% \sidebar{jb 12/17/22: \color{red} Very possibly this should be
% Section 5.4 to emphasize the methodologial simimilarity.
% And with luck we have little to say in this section. Mainly describing the $\hat L$-structures and the
% need for two versions of the etale cover.}

% key points:

% \begin{enumerate}
% \item Different theorems:
% \begin{enumerate} \item  \cite{DawHarris}: The universal covers of modular and shimura curves are quasiminimal and categorical in uncountable power.
% \item \cite{Etrev} The open image condition is equivalent to `characterizability' for Shimura varieties. Section 8.3 is called `classification of models with standard fibres.

%     The characterizability is given by notop style proof as in \cite{BHP}.

% \end{enumerate}

% \item different {\em first order theories}

% \item  \cite[Example 5.7]{Etrev} $\hat X^+$ may not (I think isn't) elementarily equivalent to $X^+$ as a covering structure.
%  \item  But $\tilde X^+$  \cite[top p 34]{Etrev} is \cite[Proposition 5.12]{Etrev}

% \item The inverse limit (page $41_3$ is used to establish $\aleph_0$-homogeneity over
% the empty set. \cite[Proposition 7.1]{Etrev} This also requires F1C1.
% \end{enumerate}

% \sidebar{It seems crucial to me to show this Proposition 7.1 for $\tilde X^+$.  Likely, it is the same proof.
% See the strange \cite[Corollary 5.14, p 38]{Etrev}. }

% \sidebar{av Nov 22, 22: we should also speak on the difference of \emph{result}, not only the motivation. jb nov 23 agreed}

\section{Model Theory and Analysis}

 One can signal three different
model theoretic approaches to analysis:

\begin{enumerate}
\item {\bf Axiomatic analysis}  studies behavior of fields of functions
    with  operators but {\em without} explicit attention in the formalism
    of continuity but rather to the algebraic properties of the functions.
    The function symbols of the vocabulary act on the functions being
    studied; the functions are elements of the domain of the model.

    Example: $DCF_0$ as discussed below.
\item {\bf Definable analysis} has a lower level of
abstraction; the domain of the functions remains the universe of the
model. The functions being studied are the compositions of the
functions named in the vocabulary; one cannot quantify over them.

Example: $o$-minimality.
\item {\bf Implicit analysis}  Attempts to provide `algebraic
    characterizations of important mathematical structure by
    axiomatizations in infinitary logic that are categorical in power.
Example: the material in this paper.
\end{enumerate}

%\subsection{Covers vs  $DCF_0$}

The first two are discussed in \cite[\S 6.3]{Baldwinphilbook}.
The work expounded in this paper has many commonalities  with a prime example of
axiomatic analysis: the study of
transcendence results for solutions of differential equations
by the study of the $\omega$-stable theory $DCF_0$ of differentially
closed fields of characteristic zero.  The notion of `not integrable  by elementary functions
 (Painlev\'e said `irreducible') is formalized by `the solution set is strongly minimal'
 \cite{Nagloothesis}. The study of Schwartzian
equations provides a general framework in which the $j$-function and modular
curves are explored. The work includes, variations on the
Ax-Lindemann-Weierstrass theorem, proofs that Generic differential equations
are strongly minimal \cite{DevilF} and
%\url{https://arxiv.org/abs/2106.02627}
 Differential Chow Varieties are Kolchin-constructible
\cite{FLS}, and  analysis of strongly minimal solution sets  defined by
differential equations in terms of the Zilber trichotomy and
$\aleph_0$-categoricity.

But while the mathematical topics are the same, the aims are different: The
covers project tries to assign a categorical description of each cover. The
$DCF_0$ approach tries to understand transcendence results for solutions of
the differential equations.
%(One theorem had the genus $0$ as a (later eliminated) hypothesis.)

The crucial methodological difference is the two-sorted nature of the cover
program.  The axiomatic analysis framework is preserved in that there is no
explicit treatment of convergence or continuity. But connecting the domain
and target by quotients under an explicit group action as well as the use of
infinitary logic provides tools not available in the earlier examples of
axiomatic analysis.

 % Is there an underlying difference in
%motivation?
%
%
%\label{section:coversDCF}
% \sidebar{jb 12/11/22:\color{red} We need to get appropriate references for the next sentence. Ronnnie's IAS talk mentions it.}
% This context is extended to the solution sets of Scwartzian differential equations, which is more general that Shimura varieties since the
% quotient groups are not all algebraic.

%\section{Covers of `smooth' complex varieties: the role of o-minimality and
%abstract elementary classes}\label{smoothvar}

\section{Families of covers of algebraic curves}\label{smoothvar}

%\sidebar{What does coherent mean? Are `families of varieties' collections that are `uniformized'
%    like elliptic curves by the j-function? av: YES
%
%    What is an example of a system of covers? av: I add one (adapted from Zilber)
%
%
%Does smooth mean the usual notion. So that by using o-minimality to enforce it, he moves away from
%`axiomatic analysis'
%
%av: it is also very much axiomatic. The `smooth' refers to using an `Analytic Zariski setting', and
%then adapting it to Peterzil-Starchenko type of analysis
%
%Can you classify this approach in terms of section 7? I think it is a variant of implicit analysis.
%But we may discuss that point.
%}

 In recent work Zilber and Daw ~\cite{DawZil0,DawZil1} deal with \emph{families} of
covers of curves. They build on earlier constructions we have discussed in
this paper. Rather than a cover of a \emph{single} variety, albeit one that
parameterized a family of varieties,  an entire family of such covers is
studied and the covering space becomes an {\em analytic Zariski structure}
\cite{ZilberZariski}.  In \cite{Zilbomin} the analysis of families is
generalized by being placed in a geometric algebraic setting.

The most salient difference between these works and those discussed earlier
in this paper is that, rather than a cover of a \emph{single} variety, an
\emph{entire family of covers} is now the main subject.  Our earlier
Definition~\ref{disgrpterm} is now replaced by a basic vocabulary consisting
of \emph{three} sorts, together with maps $\Gamma_N \setminus \HH \mapsto
\CC$ covering a family of curves $S_N(\CC)$.

%In each
%case, the categoricity of individual modular curves is replaced by a uniform
%proof of categoricity for entire families (discussed before
%Definition~\ref{disgrpterm}) of curves.

\subsection{Pseudo-analytic covers of modular curves}\label{families}

 Major differences of
paper~\cite{DawZil0} from the earlier discussion of modular curves include:

\begin{enumerate}
    \item The basic vocabulary is now $3$-sorted.
More specifically, \cite{DawZil0} considers structures $(D, G, j_N, \CC)$
where the $j_N: \HH \twoheadrightarrow S_N(\CC)$.
	The discrete group is
        now given as a third sort incorporating \emph{a group operation}
	(so its pregeometry is locally modular, rather than trivial). This sort
contains group with distinguished subsets\footnote{$E$ is the elliptic
Möbius transformations and the $\dbar_{\qbar},\dbar_{\qbar}$ are specific
{\em diagonal} matrices.} $(\gl^+_2(\QQ), \times, \sl_2(\ZZ), E(\QQ),
\{\dbar_q, \dbar'_q: q \in \QQ)\}$, where $E$ is the collection of elliptic
elements of the group; those that have unique fixed points.
	This structure is  specified up to isomorphism  by a sentence of $L_{\omega_1,\omega}$.
	But not all group elements are still named in the formal language.

    \item  \label{sorted}   The uniformizing functions $j_N$ each map into
        ${\mathbf P}^3(\mathbb{C})$ rather than
	into the arbitrarily high dimensional spaces of the maps $[\phi_{\gbar}]$
	in~\cite{DawHarris,Etrev}. Furthermore, these are now defined over
 $\mathbb{Q}$ rather than
	over $E^{ab}(\Sigma)$.

    \item As well as an almost quasiminimal axiomatization of the
        $3$-sorted structure, the domain is considered as a Zariski
        Analytic set
	 with a quasiminimal geometry. Both of these structures are shown to be uncountably categorical.
    \item The special points \emph{are not named}. However as in
        Definition~\ref{mcax} they are uniquely associated with elliptic
        elements of the group.
\end{enumerate}

In many ways, this last distinction is the most important for the general
program, as naming of the special points trivializes some of the arithmetic.
In~\cite{DawZil0}, the {structure} of the \emph{family} is proved to be
categorical in all uncountable cardinalities.

\subsection{Locally o-minimal covers of algebraic varieties }\label{omin}

The paper~\cite{Zilbomin} takes a {\em more general} approach. It abstracts
away from  naming all elements of the discrete groups as earlier in this
paper. The relations among the universal and finite covers are given more
abstractly as properties of maps from a domain (whose smoothness is defined
topologically and geometrically but not algebraically) onto families of
algebraic varieties. This smoothness as well as the eventual quasiminimality
for curves\footnote{The set-up is for arbitrary algebraic varieties, but the
categoricity result is only for curves and we restrict to that case.} is
controlled by \emph{external} o-minimal structures.

% , but here the main
%family consists of \emph{smooth} complex algebraic varieties; the notion of
%smoothness at work is
%
%In that context, Zilber attempts to find the correct conditions for categoricity for those families
%of covers.
%%of covers of
%coherent families of smooth complex algebraic varieties.
%He   combines the following ingredients:
%\begin{itemize}
%    \item Systems of covers and a universal cover (inverse limit) of
%        algebraic varieties.
%    \item The analysis of `locally definable' sets over o-minimal
%        expansions of the field of real
%	numbers $\mathbb{R}=(\mathbb{R},+,\cdot,\ldots)$ based on the work of Peterzil and
%	Starchenko.
%    \item An $L_{\omega_1\omega}$-axiomatization of these covers of
%        families of smooth complex
%	varieties, where `smoothness' is linked to the definability over the o-minimal expansions of
%	the reals, and coherence between the cover and these notions.
%  %  \item %Almost quasiminimality and excellence, attached to the
%%      The  $L_{\omega_1\omega}$-axiomatized
%%	class $\mathcal{U}$ is shown.
%\end{itemize}

\begin{remark}\label{whatsnew}
{\rm
%The previous context requires some clarifications.
\begin{enumerate}
 \item The formalization is new. For a fixed  model  $\mathsf R$ of the
     theory $T$ of a fixed o-minimal expansion of the reals (e.g the
     restricted analytic functions) a structure $\UU(\mathsf R)$ is
     defined. The resulting structure $\UU(\mathsf R)$ is
an abstract Zariski structure\footnote{Actually, $\UU(\mathsf R) = U(K)$
where $\mathsf K$ is taken as an algebraically closed field ${\mathsf
R}+i{\mathsf R}$ and $U(R)$ is constructed analogously to $U(\CC)$.}.
    \item Generalizing the last paragraph of Section~\ref{families}, in the
        standard model the domain is a complex manifold $\UU(\CC)$ with
        holomorphic maps $f_i$ onto algebraic varieties $X_i(\CC)$ with
	natural projections $\pr_{i,j}$ among the $X_i$. These analytic
properties are definable using  theory of $\mathsf K$-analytic sets in
o-minimal expansions of the reals developed in \cite{PScom08, PSicm10}.  We
fix
      $k\subseteq \mathbb{C}$ a subfield
	over which the varieties $\mathbb{X}_i$ are all defined.

\item The ostensibly  two-sorted structure of 1) becomes one-sorted because
    the field can be interpreted in the abstract Zariski structure. And the
    third sort of Section~\ref{families} has disappeared because the group
    is no longer referenced directly.
%
%    Mainly, the absence of a group action connecting the different objects
%    in the inverse system. In this case the inverse system is given, as a
%    coherent system. The morphisms $f_i$ (and the fact they are
%    $k$-rational, unramified epimorphisms) is part of the specific set-up
%    later.

 %     , when
 %     ${\mathsf K}={\mathsf R}+i{\mathsf R}$
 %     and $\mathsf
%	R$ is a model of $T$.
% of an o-minimal expansion $\left( \mathbb{R},+,\cdot,f,\ldots \right) $ of the
%	real numbers.
    %\item  The domain $\mathbb{U}$ is given  an abstract Zariski structure
%        by `admissible open covers of $\mathbb{U}$,' systems of
%	maps definable in the fixed o-minimal expansion of the reals.

\item The $o$-minimal geometry of algebraic closure in $\UU(\mathsf R)$
    imposes the desired quasiminimal geometry on $\UU(\mathsf R)$. The
    dimension function is denoted ${\rm cdim}$ for `combinatorial
    dimension'.  Note that the ordering is not externally imposed on $\UU$: rather, it is implied
    by the predicates described in (1) above and the dimension just mentioned.
%
%    `Projected to the language of analytic relations' is Zilber's short way
%    of describing a
%	rather involved construction involving `admissible open covers of $\mathbb{U}$,' systems of
%	maps definable in the fixed o-minimal expansion of the reals, and controlled by systems of
%	groups (as before, finite index maps), and a variant of Zilber's \emph{analytic Zariski
%	structures} controlling the construction.
    \item As before, there is an $L_{\omega_1,\omega}$ sentence that
        axiomatizes the quasiminimal (excellent) geometry and whose models
        form an AEC that is categorical in all cardinalities.

%
%    The associated abstract elementary class $\mathcal{U}$ is the class of
%        structures of the
%	form $\mathcal{U}({\mathsf R})$ (with universe  $\mathbb{U}({\mathsf R})$) in a language
%	giving the analytic Zariski structure to the structure. In the curve  case, Zilber
%	proves the class is quasiminimal excellent.
\end{enumerate}
}
\end{remark}

Zilber provides a proof  of the following theorem \cite{Zilbomin}:

%\sidebar{Check this paragraph 8/19}
\begin{theorem}[Categoricity of families of smooth complex algebraic varieties~\cite{Zilbomin}]
    Let $\mathbb{U}$ be a cover of a family of smooth complex algebraic variety,
    formalized as in Remark~\ref{whatsnew}, and
    let $\mathfrak{U}(\mathsf R)$ be its associated $L_{\omega_1\omega}$-definable class.
    If $\dim_{\mathbb{C}}(\mathbb{U})=1$, (i.e. if the varieties are curves) and ${\rm cdim}(\mathsf
    R/k)$ is infinite, then
    $\mathfrak{U}(\mathsf R)$ is
    categorical in all uncountable cardinals.
\end{theorem}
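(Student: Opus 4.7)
The plan is to reduce the theorem to the Bays--Hart--Hyttinen--Kes\"al\"a--Kirby ``Crucial Fact'' (Remark~\ref{cf}): verify that $\mathfrak{U}(\mathsf R)$ is an almost quasiminimal excellent class in the sense of Definition~\ref{qmdef}, whence categoricity in every uncountable cardinal follows automatically. The natural closure operator is $\cl(X) := $ the ${\rm cdim}$-closure of $X$ in $\UU(\mathsf R)$, which pulls back the field-theoretic algebraic closure of the $f_i$-images together with the discrete fiber data. Because ${\rm cdim}$ is controlled by o-minimal dimension, exchange, monotonicity, and finite character are automatic, so $(\UU(\mathsf R),\cl)$ is a pregeometry.

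Having set up the pregeometry, I would verify the remaining Basic Conditions: that $\cl(X) \in \bK$, which follows from the $L_{\omega_1,\omega}$ axiomatization advertised in Remark~\ref{whatsnew}(5); and the countable closure property, which follows from o-minimal finiteness of zero-dimensional definable sets, together with countability of the family of maps $f_i$ and the projections $\pr_{i,j}$. The $\aleph_0$-categoricity needed to get a handle on types over $\emptyset$ follows because, on the cover sort, the quantifier-free type of a tuple is determined by its $\pr$-indexed images in the countably many $X_i$ plus its ${\rm cdim}$-independence pattern, and this whole data is pinned down by the axioms.

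The heart of the matter---and where I expect the main obstacle---is $\aleph_0$-homogeneity over models (Definition~\ref{qmdef}(2)(b)). Modelled on the curve-case argument behind Theorem~\ref{fitohom}, one would extend a partial qf-isomorphism one Hodge-generic element at a time, reducing the question to a finite-index condition on the images of Galois/monodromy representations into inverse limits of fiber data, in the style of FIC1/FIC2 (Definition~\ref{fic1}, Remark~\ref{fic2}). In this abstract o-minimal setting neither the discrete group nor Serre's open mapping theorem is directly available; the required finite-index control must come from Peterzil--Starchenko's theory of $\mathsf K$-analytic sets in o-minimal expansions of the reals (\cite{PScom08, PSicm10}), and ultimately, in the curve case, from Pila--Wilkie-style counting of algebraic points on o-minimal sets. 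The hypothesis $\dim_{\mathbb{C}}(\UU)=1$ is essential here: it keeps fibers of the $f_i$ one-dimensional, so the relevant monodromy remains a manageable arithmetic object, and an Ax--Lindemann--Weierstrass analogue in the o-minimal category furnishes the realizations needed to close the back-and-forth. The hypothesis that ${\rm cdim}(\mathsf R/k)$ is infinite plays the role of the $\Phi_\infty$ axiom in Notation~\ref{infax}, ensuring the pregeometry has enough room and that models have sufficient independent Hodge-generics for the inductive step to terminate.
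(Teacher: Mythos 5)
The paper itself does not prove this theorem: it is quoted from \cite{Zilbomin}, with only the framing of Remark~\ref{whatsnew} as context, so the comparison can only be against that framing. Your outer skeleton matches it well: take $\cl$ to be the ${\rm cdim}$-closure induced by the o-minimal algebraic closure on $\UU(\mathsf R)$, check the pregeometry axioms, ccp and the basic conditions of Definition~\ref{qmdef}, use the hypothesis that ${\rm cdim}(\mathsf R/k)$ is infinite in the role of $\Phi_\infty$ (Notation~\ref{infax}), and conclude via the Crucial Fact (Remark~\ref{cf}) that an almost quasiminimal excellent class is categorical in all uncountable powers. That is exactly the route Remark~\ref{whatsnew}(4)--(5) attributes to \cite{Zilbomin}.

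The genuine gap is at the step you yourself flag as the heart of the matter: $\aleph_0$-homogeneity (over $\emptyset$ and over models). You propose to obtain it by transplanting the FIC1/FIC2 mechanism of Section~\ref{section:fic} --- Galois representations into inverse limits, Serre's open image theorem, plus Pila--Wilkie counting and an Ax--Lindemann--Weierstrass analogue --- but in the \cite{Zilbomin} formalization this machinery has no foothold: the discrete group is no longer referenced in the language at all (Remark~\ref{whatsnew}(3)), special points are not named, there is no Hodge-generic/special dichotomy as in Fact~\ref{dichot}, and the varieties are defined over an arbitrary subfield $k\subseteq\CC$ rather than over $E^{ab}(\Sigma)$, so there is no representation $\rho^z$ into a group $\overline\Gamma$ on which to impose a finite-index condition, and the arithmetic inputs (Serre, Ribet) that make FIC1/FIC2 true are specific to modular and Shimura curves. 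Whatever replaces the back-and-forth of Theorem~\ref{fitohom} must be extracted from the o-minimal and $\mathsf K$-analytic machinery of \cite{PScom08,PSicm10} applied to the cover system itself; your sketch asserts that such control ``must come from'' this theory but supplies no argument, and the appeal to Pila--Wilkie is not part of the cited framework. As written, the central homogeneity claim is a conjecture, not a proof, so the reduction to Remark~\ref{cf} is not actually completed.
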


Zilber remarks   that in the case of higher dimensional varieties,
categoricity in $\aleph_1$ can still be proved.

%\sidebar{ What does ` projected to the language of analytic relations' mean?
%
%    av: projective locally o-minimal (in the sense of M. Edmundo) See paragraph above
%
%Does `extension of' mean bigger or with more vocabulary or satisfying more axioms or ...
%
%av: bigger, more models - yes, although may perhaps remove the word
%
%}

%We will not go into the details of the proof at work here, but we do mention
%the fact that they have much in common with proofs we presented in earlier
%sections. The language of covers is common, as is the presence of an inverse
%limit of a family of mathematical objects. Also in common with much of what
%we described earlier is the heavy use of quasiminimality (Zilber claims
%really quasiminimal excellence) in the curve, dimension $1$, case (to obtain
%categoricity in all uncountable cardinals). For varieties of arbitrary
%dimension, the result cannot use quasiminimality and therefore

%\sidebar{Interesting mostly in the presence of the continuum hypothesis; it has not said anything direct about characterizing the canonical object.}

%In that more general case, Zilber obtains stability over submodels, as well
%as $\aleph_0$-homogeneity. In surveying the tools and conditions, we signal
%commonalities with earlier sections of this paper.

%Up to now, this looks very similar to other cover systems (arising from other mathematical settings)
%we have described elsewhere in this paper.

\begin{example}{\rm %Some examples}

Here are some examples from \cite{Zilbomin}.
%; we see
%enormous room for variation of
%these. % but we may engage that line in later work.
Fix the o-minimal expansion $\mathbb{R}_{\rm An}=\mathbb{R}_{\exp,an}$ of the
reals with the exponential function and the restricted (to bounded intervals)
analytic functions.

\begin{itemize}
    \item Let $I=\mathbb{N}$, $\mathbb{U}=\mathbb{C}$, $f_k(z)=\exp(\frac{z}{k})$, $D_n=\{z
	\in \mathbb{C}:-2\pi n<{\rm Im}(z)<2\pi n\} $. These are easily seen to provide a cover
	system.
    %\item Let $I=\mathbb{N}$, $\mathbb{U}=\mathbb{C}$, and
%        $f_k=\exp_{\tau,k}:\mathbb{C}\to
%	\mathbb{T}_\tau$ given by $f_k(z)=\exp_\tau\left(\frac{z}{k}\right)$ (and where $\exp_\tau$
%	is a variant of the exponential constructed from the Weierstraß $\wp$-function and $\wp'$,
%	with period $k\Lambda_\tau=k\mathbb{Z}+\tau k\mathbb{Z}$ ) $D_1$ is the open diamond unit
%	square in $\mathbb{C}$ (vertices $\left(
%	-\frac{i+1}{2},-\frac{i-1}{2},\frac{i+1}{2},\frac{i-1}{2} \right) $, $D_n=nD_1$).
    \item %As we indicated in the motivation
     The  $j$-function with variants $j_N$ as uniformizers for the modular
        curves  $\Gamma_N \setminus \HH$
are examples; this study
	allows one to formalize their analytic properties in terms of o-minimality. Finally, other
	examples include the Siegel half-space and polarized algebraic varieties (these last
	examples are claimed but not developed by Zilber).
\end{itemize}}
\end{example}

%\newcommand{\etalchar}[1]{$^{#1}$}

%\bibliography{C:/Users/jbald/texmf/bibtex/bib/local/ssgroups}

\begin{thebibliography}{BMPTW20}

\bibitem[ABC08]{ABC} T.~Altinel, A.~Borovik, and G.~Cherlin.
\newblock {\em Simple Groups of Finite Morley Rank of Even Type}.
\newblock Mathematical Surveys and Monographs. American Mathematical Society,
  2008.

\bibitem[Bal88]{Baldwinbook} John~T. Baldwin.
\newblock {\em Fundamentals of Stability Theory}.
\newblock Springer-Verlag, 1988.

\bibitem[Bal09]{Baldwincatmon} John~T. Baldwin.
\newblock {\em Categoricity}.
\newblock Number~51 in University Lecture Notes. American Mathematical Society,
  Providence, USA, 2009.

\bibitem[Bal18]{Baldwinphilbook} John~T. Baldwin.
\newblock {\em Model Theory and the Philosophy of Mathematical Practice:
  Formalization without Foundationalism}.
\newblock Cambridge University Press, 2018.

\bibitem[Bar73]{Barwisebf} J.~Barwise.
\newblock Back and forth through infinitary logic.
\newblock In M.~Morley, editor, {\em Studies in Model Theory}, pages 1--34.
  Mathematical Association of America, 1973.

\bibitem[BGH14]{BGH} M.~Bays, M.~Gavrilovich, and M.~Hils.
\newblock Some definability results in abstract {K}ummer theory.
\newblock {\em International Mathematics Research Notices}, 43:3975--4000,
  2014.
\newblock \url{https://doi.org/10.1093/imrn/rnt057}.

\bibitem[BHH{\etalchar{+}}14]{BHHKK} M.~Bays, B.~Hart, T.~Hyttinen,
    M.~Kesala, and J.~Kirby.
\newblock Quasiminimal structures and excellence.
\newblock {\em Bulletin of the London Mathematical Society}, 46:155--163, 2014.

\bibitem[BHP20]{BHP} M.~Bays, B.~Hart, and A.~Pillay.
\newblock Universal covers of commutative finite {M}orley rank groups.
\newblock {\em Journal of the Institute of Mathematics of Jussieu},
  19:767--799, 2020.
\newblock \url{https://www3.nd.edu/~apillay/papers/universalcovers-BHP.pdf}.


\bibitem[BZ11]{BaysZil} M~Bays and B~Zilber.
\newblock Covers of multiplicative groups of algebraically closed fields of
  arbitrary characteristic.
\newblock {\em Bull. Lond. Math. Soc.}, 43:689--702, 2011.

\bibitem[BZ08]{modsp} D.~Ben-Zvi.
\newblock Moduli spaces.
\newblock In T.~Gowers, editor, {\em The Princeton Companion to Mathematics},
  pages 408--419. Princeton University Press, 2008.

\bibitem[BHV18]{BerensteinHyttinenVillaveces} A.~Berenstein, T.~Hyttinen, and
    A.~Villaveces.
\newblock Hilbert spaces with generic predicates.
\newblock {\em Rev. Col. Mat.}, 52:107--130, 2018.

\bibitem[BN94]{BorovikNesinbook} A.~Borovik and A.~Nesin.
\newblock {\em Groups of {F}inite {M}orley {R}ank}.
\newblock Oxford University Press, 1994.

\bibitem[BMPTW20]{BreuillardPizarroTentWagner} E.~Breuillard,
    A.~Martin-Pizarro, K.~Tent, and F.O. Wagner.
\newblock Model theory: Groups, geometries and combinatorics.
\newblock {\em Oberwolfach Rep.}, 17(1):91–142, 2020.






\bibitem[CMVZ21]{CrViZi} John~Alex Cruz-Morales, Andres Villaveces, and Boris
    Zilber.
\newblock Around logical perfections.
\newblock {\em Theoria}, 87:971--984, 2021.



\bibitem[DH17]{DawHarris} Christopher Daw and Adam Harris.
\newblock Categoricity of modular and {S}himura curves.
\newblock {\em Journal of the Institute of Mathematics of Jussieu},
  66:1075--1101, 2017.
\newblock math arxiv \url{https://arxiv.org/pdf/1304.4797.pdf}.

\bibitem[DZ22a]{DawZil1} C.~Daw and B.I. Zilber.
\newblock Canonical models of modular curves and galois action on cm-points.
\newblock math arxiv: \url{https://arxiv.org/pdf/2106.06387.pdf}, 2022.

\bibitem[DZ22b]{DawZil0} C.~Daw and B.I. Zilber.
\newblock Modular curves and their pseudo-analytic cover.
\newblock math arxiv: \url{https://arxiv.org/pdf/2107.11110.pdf} Nov., 2022.

\bibitem[DF23]{DevilF} M.~DeVilbiss and J.~Freitag.
\newblock Generic differential equations are strongly minimal.
\newblock to appear Compositio Math; Math Arxiv:
  \url{https://arxiv.org/abs/2106.02627}, 2023.

\bibitem[Ete22]{Etrev} Sebastian Eterovi\'{c}.
\newblock Categoricity of {S}himura varieties.
\newblock 2022 version, 2022.

\bibitem[FLS17]{FLS} J.~Freitag, Wei Li, and T.~Scanlon.
\newblock Differential chow varieties exist.
\newblock {\em J. Lond. Math. Soc.}, (2) 95:128--156, 2017.
\newblock appendix by William Johnson.

\bibitem[Gav08]{Gavk} Misha Gavrilovich.
\newblock A remark on transitivity of {G}alois action on the set of uniquely
  divisible abelian extensions of ${E}(\overline{{Q}})$ by $z^2$.
\newblock {\em Journal of {K}-theory}, 38:135--152, 2008.

\bibitem[GL02]{Bilgi} R.~Grossberg and Olivier Lessmann.
\newblock Classification theory for abstract elementary classess.
\newblock In {\em Logic and Algebra}, volume 302 of {\em Contemporary
  Mathematics}, pages 165--204. AMS, 2002.

\bibitem[Had54]{Hadamard} Jacques Hadamard.
\newblock {\em The psychology of invention in the mathematical field}.
\newblock Dover, 1954.
\newblock First edition Princeton 1945; Dover:
  \url{http://worrydream.com/refs/Hadamard\%20-\%20The\%20psychology\%20of\%20invention\%20in\%20the\%20mathematical\%20field.pdf}
  French version of Poincaire story
  \url{https://www.persee.fr/doc/ahess_0395-2649_1963_num_18_2_420994_t1_0399_0000_1}.

\bibitem[Har87]{Hart} Bradd Hart.
\newblock An exposition of {O}{T}{O}{P}.
\newblock In J.~Baldwin, editor, {\em Classification {T}heory: {C}hicago,
  1985}. Springer-Verlag, 1987.

\bibitem[Har14]{Harristhe} Adam Harris.
\newblock {\em Categoricity and covering spaces}.
\newblock PhD thesis, Oxford, 2014.
\newblock \url{https://arxiv.org/pdf/1412.3484.pdf}.

\bibitem[Kar64]{Karp} C.~Karp.
\newblock {\em Languages with Expressions of Infinite Length}.
\newblock North Holland, 1964.

\bibitem[Kat92]{Katok} S~Katok.
\newblock {\em Fuchsian Groups}.
\newblock University of Chicago Press, Chicago, 1992.

\bibitem[Kei70]{Keislerlq} H.J. Keisler.
\newblock Logic with quantifier "there exists uncountably many".
\newblock {\em Annals of Math. Logic}, 1:1--93, 1970.

\bibitem[Kir10]{Kirbyqm} Jonathan Kirby.
\newblock On quasiminimal excellent classes.
\newblock {\em Journal of Symbolic Logic}, 75:551--564, 2010.

\bibitem[Las85]{Lascargrp} D.~Lascar.
\newblock Les groupes $\omega$-stable de rang fini.
\newblock {\em Transactions of the American Mathematical Society},
  292:451--462, 1985.

\bibitem[Mac70]{Macab} Angus~J. Macintyre.
\newblock On $\omega_1$-categorical theories of abelian groups.
\newblock {\em Fundamenata Mathematicae}, 70:253--270, 1970.

\bibitem[Mar02]{Markerbook} D.~Marker.
\newblock {\em Model Theory: An {I}ntroduction}.
\newblock Springer-Verlag, 2002.

\bibitem[Mil12]{milnenams} James~S. Milne.
\newblock What is a {S}himura variety.
\newblock {\em Notices of the American Mathematical Society}, 59:2560--1561,
  2012.

\bibitem[Miy89]{Miyake} T.~Miyake.
\newblock {\em Modular Forms}.
\newblock Springer, 1989.

\bibitem[Mor65]{Morley65} M.~Morley.
\newblock Categoricity in power.
\newblock {\em Transactions of the American Mathematical Society},
  114:514--538, 1965.

\bibitem[Nag14]{Nagloothesis} Joel Nagloo.
\newblock {\em Model theory and differential equations}.
\newblock PhD thesis, Leeds, {2014}.



\bibitem[PS98]{PetStartri} Ya'acov Peterzil and Sergei Starchenko.
\newblock A trichotomy theorem for o-minimal theories.
\newblock {\em Proceedings of the London Mathematical Society}, 77:481--523,
  1998.

\bibitem[PS08]{PScom08} Ya'acov Peterzil and Sergei Starchenko.
\newblock Complex analytic geometry in a non-standard setting.
\newblock In Chatzidakis, MacPherson, Pillay, and Wilkie, editors, {\em Model
  Theory with Applications to Algebra and Analysis}, Lecture Notes Series 349
  I, pages 117--166. London Math Soc., 2008.

\bibitem[PS10]{PSicm10} Ya'acov Peterzil and Sergei Starchenko.
\newblock Tame complex analysis and o-minimality.
\newblock In {\em Proceedings of the International Congress of Mathematicians,
  Vol. II (New Delhi, 2010)}, pages 58--81, 2010.

\bibitem[Poi85]{Poizatbook} Bruno Poizat.
\newblock {\em Cours de Th\'eories des Mod\`eles}.
\newblock Nur Al-mantiq Wal-ma'rifah, 82, Rue Racine 69100 Villeurbanne France,
  1985.

\bibitem[Poi87]{Poizatbook2} Bruno Poizat.
\newblock {\em Groupes Stables}.
\newblock Nur Al-mantiq Wal-ma'rifah, 82, Rue Racine 69100 Villeurbanne France,
  1987.

\bibitem[Rib75]{Ribet} K.~Ribet.
\newblock On $\ell$-adic representations attached to modular forms.
\newblock {\em Inventiones Mathmematica}, 28:245--275, 1975.

\bibitem[Ser72]{Serre} J.~P. Serre.
\newblock Propi\`{e}t\`{e}s galoisienne des pointes d'ordre fini des courbes
  elliptiques.
\newblock {\em Invent. Mat.}, 15:259--331, 1972.

\bibitem[She83a]{Sh87a} S.~Shelah.
\newblock Classification theory for nonelementary classes. {I}. the number of
  uncountable models of $\psi \in {L}_{\omega _{1}\omega }$ part {A}.
\newblock {\em Israel Journal of Mathematics}, 46:3:212--240, 1983.
\newblock Sh index 87a.

\bibitem[She83b]{Sh87b} S.~Shelah.
\newblock Classification theory for nonelementary classes. {II}. the number of
  uncountable models of $\psi \in {L}_{\omega _{1}\omega }$ part {B}.
\newblock {\em Israel Journal of Mathematics}, 46;3:241--271, 1983.
\newblock Sh index 87b.

\bibitem[She90]{Shelahbook2nd} S.~Shelah.
\newblock {\em Classification {T}heory and the {N}umber of {N}onisomorphic
  {M}odels}.
\newblock North-Holland, 1990.
\newblock second edition.

\bibitem[Shi71]{Shimura} G.~Shimura.
\newblock {\em Introduction to the Arithmetic Theory of Automorphic Functions}.
\newblock Iwanami Shoten and Princeton University Press, 1971.

\bibitem[TZ12]{TentZiegler} Katrin Tent and Martin Ziegler.
\newblock {\em A course in Model Theory}.
\newblock Lecture Notes in Logic. Cambridge University Press, 2012.

\bibitem[Vas18]{Vaseyqmaec} Sebastien Vasey.
\newblock Quasiminimal abstract elementary classes.
\newblock {\em Archive for Mathematical Logic}, 57:299--315, 2018.

\bibitem[Zil84]{Zilbericm} B.I. Zilber.
\newblock The structure of models of uncountably {c}ategorical {t}heories.
\newblock In {\em Proceedings of the International Congress of Mathematicians
  August 16-23, 1983, Warszawa}, pages 359--68. Polish Scientific Publishers,
  Warszawa, 1984.

\bibitem[Zil04]{Zilberpseudoexp} B.I. Zilber.
\newblock Pseudo-exponentiation on algebraically closed fields of
  characteristic 0.
\newblock {\em Annals of Pure and Applied Logic}, 132:67--95, 2004.

\bibitem[Zil05a]{Zilberparis} B.I. Zilber.
\newblock Analytic and pseudo-analytic structures.
\newblock In Rene Cori, Alexander Razborov, Stevo Todorcevic, and Carol Wood,
  editors, {\em Logic Colloquium 2000; Paris, France July 23-31, 2000},
  number~19 in Lecture Notes in Logic. Association of Symbolic Logic, 2005.

\bibitem[Zil05b]{Zilbercatex} B.I. Zilber.
\newblock A categoricity theorem for quasiminimal excellent classes.
\newblock In A.~Blass and Y.~Zhang, editors, {\em Logic and its
  {A}pplications}, volume 380 of {\em Contemporary Mathematics}, pages
  297--306. American Mathematical Society, Providence, RI, 2005.

\bibitem[Zil06]{Zilbercovers} B.I. Zilber.
\newblock Covers of the multiplicative group of an algebraically closed field
  of characteristic 0.
\newblock {\em Journal of the London Mathematical Society}, pages 41--58, 2006.

\bibitem[Zil10]{ZilberZariski} B.I. Zilber.
\newblock {\em Zariski Geometries: Geometry from the Logicians Point of View}.
\newblock Number 360 in London Math. Soc. Lecture Notes. London Mathematical
  Society, Cambridge University Press, 2010.

\bibitem[Zil22]{Zilbomin} B.I. Zilber.
\newblock Non-elementary categoricity and projective locally o-minimal classes.
\newblock on Zilber's webpage, 2022.

\end{thebibliography}
%\bibliographystyle{alpha}
%
%\end{document}

\newcommand{\etalchar}[1]{$^{#1}$}

\end{document}